\documentclass[12pt,a4paper,twoside]{amsart}
\usepackage[latin1]{inputenc}
\usepackage[T1]{fontenc}
\usepackage{eucal}
\usepackage[english]{babel}
\usepackage{amssymb}
\usepackage{amsmath}
\usepackage{amsthm}
\usepackage{xr}
\usepackage{lmodern}
\usepackage{xcolor}

\title{Rational points close to non-singular algebraic curves}
\author{Faustin Adiceam}
\address{%
Laboratoire d'analyse et de math\'{e}matiques appliqu\'{e}es \\ Universit\'{e} Paris-Est Cr\'{e}teil \\ Cr\'{e}teil, France}
\email{%
faustin.adiceam@u-pec.fr}

\author{Oscar Marmon}
\address{Centre for Mathematical Sciences \\ Lund University \\ Box 118 \\ 221 00 Lund \\ Sweden}
\email{oscar.marmon@math.lu.se}

\begin{document}

\newcommand{\epsi}{\varepsilon}
\newcommand{\xx}{\mathbf{x}}
\newcommand{\xxi}{\boldsymbol{\xi}}
\newcommand{\eeta}{\boldsymbol{\eta}}
\newcommand{\0}{\boldsymbol{0}}
\newcommand{\bb}{\mathbf{b}}
\newcommand{\uu}{\mathbf{u}}
\newcommand{\ee}{\mathbf{e}}
\newcommand{\vv}{\mathbf{v}}
\newcommand{\yy}{\mathbf{y}}
\newcommand{\zz}{\mathbf{z}}
\newcommand{\ww}{\mathbf{w}}
\newcommand{\ff}{\mathbf{f}}
\newcommand{\cc}{\mathbf{c}}
\newcommand{\dd}{\mathbf{d}}
\newcommand{\hh}{\mathbf{h}}
\newcommand{\ttt}{\mathbf{t}}
\renewcommand{\aa}{\mathbf{a}}
\newcommand{\bK}{\mathbf{K}}
\newcommand{\bB}{\mathbf{B}}
\newcommand{\ZZ}{\mathbb{Z}}
\newcommand{\Zpol}{\ZZ[X_1,\ldots,X_n]}
\newcommand{\FF}{\mathbb{F}}
\newcommand{\RR}{\mathbb{R}}
\newcommand{\NN}{\mathbb{N}}
\newcommand{\CC}{\mathbb{C}}
\renewcommand{\AA}{\mathbb{A}}
\newcommand{\PP}{\mathbb{P}}
\newcommand{\GG}{\mathbb{G}}
\newcommand{\QQ}{\mathbb{Q}}
\newcommand{\sB}{\mathsf{B}}
\newcommand{\cP}{\mathcal{P}}
\newcommand{\cR}{\mathcal{R}}
\newcommand{\cB}{\mathcal{B}}
\newcommand{\cE}{\mathcal{E}}
\newcommand{\cC}{\mathcal{C}}
\newcommand{\cN}{\mathcal{N}}
\newcommand{\cM}{\mathcal{M}}
\newcommand{\cL}{\mathcal{L}}
\newcommand{\cD}{\mathcal{D}}
\newcommand{\cA}{\mathcal{A}}
\newcommand{\cK}{\mathcal{K}}
\newcommand{\cF}{\mathcal{F}}
\newcommand{\cZ}{\mathcal{Z}}
\newcommand{\cX}{\mathcal{X}}
\newcommand{\cG}{\mathcal{G}}
\newcommand{\cH}{\mathcal{H}}
\newcommand{\cI}{\mathcal{I}}
\newcommand{\cO}{\mathcal{O}}
\newcommand{\cW}{\mathcal{W}}
\newcommand{\ud}{\mathrm{ud}}
\newcommand{\Zar}{\mathrm{Zar}}
\newcommand{\fm}{\mathfrak{m}}
\newcommand{\fp}{\mathfrak{p}}
\newcommand{\smod}[1]{\,(#1)}
\newcommand{\spmod}[1]{\,(\mathrm{mod}\,{#1})}

\renewcommand{\bf}{\mathbf}
\newcommand{\cal}{\mathcal}
\newcommand{\prim}{\mathrm{prim}}
\newcommand{\GL}{\mathrm{GL}}
\newcommand{\tr}{\mathrm{tr}}
\newcommand{\m}{\mathfrak{m}}
\newcommand{\cS}{\mathcal{S}}
\newcommand{\Mod}[1]{\ (\mathrm{mod}\ #1)}
\renewcommand{\phi}{\varphi}

\newtheorem{thm}{Theorem}[section]
\newtheorem{lemma}[thm]{Lemma}
\newtheorem*{lemma*}{Lemma}
\newtheorem{prop}[thm]{Proposition}
\newtheorem*{prop*}{Proposition}
\newtheorem*{thm*}{Theorem}
\newtheorem{claim}[thm]{Claim}
\newtheorem{cor}[thm]{Corollary}
\newtheorem*{conj*}{Conjecture}
\newtheorem{conj}{Conjecture}
\newtheorem*{sats*}{Sats}
\theoremstyle{remark}
\newtheorem*{note*}{Note}
\newtheorem{note}{Note}
\newtheorem*{rem*}{Remark}
\newtheorem{rem}[thm]{Remark}
\newtheorem{example}[thm]{Example}
\newtheorem*{acknowledgement*}{Acknowledgement}
\newtheorem*{question*}{Question}
\newtheorem*{answer*}{Answer}
\theoremstyle{definition}
\newtheorem*{def*}{Definition}
\newtheorem{notation}[thm]{Notation}
\newtheorem*{notation*}{Notation}
\newtheorem{assumptions}[thm]{Assumptions}
\newtheorem{assumption}[thm]{Assumption}

\begin{abstract}
We study the density of solutions to Diophantine inequalities involving non-singular ternary forms, or equivalently, the density of rational points close to non-singular plane algebraic curves.
\end{abstract}

\maketitle

\section{Introduction}
\label{sec:intro}

Let $F \in \ZZ[x,y,z]$ be a homogeneous polynomial of degree $k \geq 3$. We wish to estimate the number $N_\gamma(F,B)$ of primitive integer solutions to the inequalities
\begin{equation}
\label{eq:ternary_ineq}
|F(x,y,z)| \leq B^\gamma, \quad B/2 < \max\{|x|,|y|,|z|\} \leq B,
\end{equation}
where 
$0 \leq \gamma < k$. 
We shall always assume that $F$ is non-singular, that is, that $\nabla F(\xx) \neq \0$ for all non-zero $\xx \in {\overline\QQ}^3$. It is natural to view solutions to \eqref{eq:ternary_ineq} as rational points close to the projective curve defined by the equation $F=0$, which we shall denote by $C_F \subset \PP^2$. It is clearly enough to count solutions to \eqref{eq:ternary_ineq} for which 
\begin{equation}
\label{eq:z_biggest}
|z| \geq \max\{|x|,|y|\}.
\end{equation}
Then, putting
\[
g(x,y) = F(x,y,1),
\]
we are equivalently counting rational points 
\[
(r,s) = (x/z,y/z) \in [-1,1]^2,
\]
with denominator $z \in (B/2,B]$, that satisfy 
$|g(r,s)| =O(B^{\gamma-k})$. These points in turn lie at a distance of at most $O(B^{\gamma-k})$ from the affine curve $g(x,y) = 0$, in a sense that will be made precise in \S \ref{sec:proof}.

An \emph{inflection point} of a projective plane curve is a point where the curve meets its tangent line with multiplicity at least three. A \emph{sextactic point} is a point where the curve meets its osculating conic with multiplicity at least six. Such points always exist, but a curve of degree $k \geq 3$ can have at most $3k(k-2)$ inflection points and $3k(4k-9)$ sextactic points, see for example \cite{Thorbergsson_Umehara}.
The tangent lines at inflection points may contain an excessive amount of rational points close to the curve, as seen by considering an example such as the Fermat curve $x^k + y^k = 1$ with its tangent line $y=1$ at $(0,1)$. 

With this in mind, we enumerate the tangent lines to $C_F$ at inflection points as $T_1,\dotsc,T_K$, where $K \leq 3k(k-2)$, and similarly the osculating conics to $C_F$ at its sextactic points as $O_1,\dotsc,O_{K'}$, where $K' \leq 3k(4k-9)$. We define the modified counting function $N^*_\gamma(F,B)$ to be the number of solutions to \eqref{eq:ternary_ineq} such that
\[
(x,y,z) \notin \bigcup_{1 \leq i \leq K} T_i. 
\]
The osculating conics $O_i$ will require special care, but we shall see that their contribution is small enough that they do not need to be excluded from the point count. Furthermore, we define the quantity $\nu = \nu_F$ to be the maximum order of contact of $C_F$ with any of the tangent lines $T_i$. We then have 
\(
3 \leq \nu_F \leq k,
\)
and it is well known that 
\begin{equation}
\label{inflection_assumptions}
\nu_F = 3
\end{equation}
for a generic choice of $F$, in the following sense: a general plane curve of degree $k$ has no hyperflex -- if $F$ is viewed as an element of the parameter space $\PP^{N-1}$, where $N = \binom{k+2}{2}$, then the locus of curves $C_F$ violating \eqref{inflection_assumptions} is a proper Zariski closed subset of $\PP^{N-1}$. We shall sometimes make the assumption \eqref{inflection_assumptions} to obtain the best possible bounds.

Before stating our bounds for the counting functions $N_\gamma(F,B)$ and $N^*_\gamma(F,B)$, it is perhaps good to recall the best available bound for the number of rational points of bounded height \emph{on} the curve $C_F$. If
\[
N_F(B) = \#\{(x,y,z) \in \ZZ^3_\prim: F(x,y,z) = 0,\ |x|,|y|,|z| \leq B\},
\]
then Heath-Brown \cite{Heath-Brown02} has shown that
\begin{equation}
\label{eq:on_curve}
N_F(B) \ll_{k,\epsi} B^{2/k + \epsi}
\end{equation}
for any $\epsi >0$. This bound has subsequently been improved by Walsh \cite{Walsh} to $N_F(B) \ll_k B^{2/k}$, which is sharp. Here we have used the Vinogradov notation $f \ll g$ to express the asymptotic relation $f=O(g)$, with subscripts $k,\epsi$ to indicate that the implied constant is allowed to depend on those parameters (but not on the the coefficients of $F$). Related notations that will also be used in the paper are: $f \gg g$ to denote that $g \ll f$, $f \asymp g$ to denote that $g \ll f \ll g$, $f \sim g$ to denote that $f/g \to 1$ (as $B \to \infty$), and $f \approx g$ to denote that $f \sim C g$ for some constant $C$.

Our first result is a general bound for the modified counting function $N^*_\gamma(F,B)$. It is most conveniently stated by first introducing the quantity
\begin{equation}
\label{eq:tau_def}
\tau = k-\gamma. 
\end{equation}

\begin{thm}
\label{thm:main}
Suppose that $k \geq 5$ and $\tau \geq 2$. Then we have
\begin{multline*}
N^*_\gamma(F,B) \ll_{F,\gamma,\epsi} B^{9/(4\tau) + 1-\tau/k + \epsi} + B^{2-\tau/4+ \epsi} + B^{2 + 27/(20\tau) - 9\tau/20 + \epsi}
\\+ B^{3/(2\tau) + 2/3 + \epsi}.
\end{multline*}
In particular we have
\[
N^*_\gamma(F,B) \ll_{F,\gamma,\epsi} B^{9/(4\tau)+ 1-\tau/k + \epsi} + B^{3/(2\tau) + 2/3 + \epsi}
\]
for $\tau \geq 3.721$.
\end{thm}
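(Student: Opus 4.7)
The plan is to use the determinant method of Bombieri--Pila / Heath-Brown, adapted to count rational points close to (rather than on) the curve $C_F$. First I would reduce, as in the introduction, to counting primitive triples $(p,r,q) \in \ZZ^3$ with $q \in (B/2,B]$, $(p/q,r/q) \in [-1,1]^2$, and $(p/q,r/q)$ within distance $\ll_F B^{-\tau}$ of the affine curve $g(x,y) = F(x,y,1) = 0$; solutions on tangent lines $T_i$ are to be discarded.

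Next I would cover the $B^{-\tau}$-tubular neighbourhood of $C_F$ in $[-1,1]^2$ by small boxes of a parameter size $\delta$ and partition these into arcs on which $C_F$ is the graph of a smooth function (say $y=\phi(x)$) with $|\phi''|$ of constant magnitude. Outside small neighbourhoods of the $O_k(1)$ inflection and sextactic points (whose total contribution will have to be analysed separately), the curvature is bounded away from $0$, so a rational point $(p/q,r/q)$ near the arc must satisfy $|q\phi(p/q)-r| \ll B^{1-\tau}$; in particular $r$ is determined by $(p,q)$ up to $O(1)$ choices once $\tau \geq 1$.

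On each arc I would apply the determinant method. Choosing a monomial basis of degree $\leq D$, one evaluates the matrix of monomials at the rational points $(p_j/q_j,r_j/q_j)$; a Taylor-expansion argument bounds the determinant in terms of $\delta$, $B^{-\tau}$, $B^{2D}$ and the contact order $\nu_F$ along the relevant tangents. When this bound falls below $1$, the points lie on a single auxiliary curve of degree $D$, and B\'ezout's theorem (applied using that $C_F$ is non-singular and absolutely irreducible, so cannot be a component of any strictly lower-degree piece) gives $\ll_k Dk$ further points per arc. Summing over arcs, and finally over $q$, optimising jointly in $\delta$ and $D$ produces the four terms: the first comes from the standard choice $D = \Theta(\sqrt{\tau})$ yielding $B^{9/(4\tau)+1-\tau/k+\epsi}$; the $B^{2-\tau/4+\epsi}$ term is what remains once the thickness $B^{-\tau}$ is too large for the determinant method to bite; $B^{2+27/(20\tau)-9\tau/20+\epsi}$ arises from a careful balance using conics ($D=2$) against arcs of intermediate curvature; and $B^{3/(2\tau)+2/3+\epsi}$ reflects the stage where the dominant cost is the number of auxiliary curves produced (handled by a Salberger-type global argument).

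The main obstacle will be the sextactic points: near them the degree-$2$ Taylor approximation of $C_F$ is unexpectedly accurate, so a na\"ive determinant bound with $D=2$ is useless, and one cannot simply exclude these points from the count. I would handle them by stratifying the corresponding arcs by the actual contact order with the osculating conic, using $\nu_F$ to control the Taylor remainder, and then showing that the $O_k(1)$ sextactic contributions are absorbed into the stated bound rather than requiring removal (as claimed immediately after the definition of $N^*_\gamma(F,B)$). The final assertion for $\tau \geq 3.721$ then follows by checking that for such $\tau$ the middle two terms $B^{2-\tau/4+\epsi}$ and $B^{2+27/(20\tau)-9\tau/20+\epsi}$ are dominated by the first and fourth.
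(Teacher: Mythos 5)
Your first step --- the approximate determinant method producing $O(B^{9/(4\tau)+\epsi})$ auxiliary curves of bounded degree, each attached to a small patch --- does match the paper's Theorem \ref{thm:approximative_14} and Corollary \ref{cor:determinant}. But the heart of the proof is what happens afterwards, and there your argument breaks down. You propose to conclude via B\'ezout that each auxiliary curve contributes $\ll_k Dk$ points. B\'ezout controls the intersection of the auxiliary curve with $C_F$, whereas the points being counted are merely \emph{close} to $C_F$, not on it; an auxiliary line or conic can carry a huge number of rational points of denominator $\asymp B$ inside the $B^{-\tau}$-neighbourhood (a tangent line at an inflection point carries $\asymp B^{2-\tau/\nu}$ of them, which is why the $T_i$ are excluded from $N^*_\gamma$ in the first place). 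So the step ``determinant bound $<1$ $\Rightarrow$ $O(Dk)$ points per arc'' is false, and with it the derivation of all four exponents. In the paper the four terms have entirely different origins than you assign them: $B^{2-\tau/4+\epsi}$ and $B^{2+27/(20\tau)-9\tau/20+\epsi}$ come from the \emph{linear} auxiliary curves, handled by Roth's theorem near inflection points (Lemma \ref{lem:roth_case}) together with Huxley's dual-curve argument on arcs of controlled curvature (Lemma \ref{lem:duality}); $B^{9/(4\tau)+1-\tau/k+\epsi}$ comes from the \emph{quadratic} auxiliary curves, parameterised by binary quadratic forms so that the original inequality becomes a Thue-type inequality for a degree-$2k$ binary form, estimated by Proposition \ref{prop:binary}; and $B^{3/(2\tau)+2/3+\epsi}$ comes from auxiliary curves of degree $\geq 3$, via Heath-Brown's $B^{2/3+\epsi}$ bound sharpened by a lattice-minima argument exploiting the patch condition (Lemma \ref{lem:successive_min}). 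None of this machinery --- Roth, the duality with the curve $y=g(x)$, the Thue-inequality bounds of \S\ref{sec:binary}, the successive-minima refinement --- appears in your sketch, and without it there is no route to the stated exponents.

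Your instinct that sextactic points are a danger is sound, but the mechanism you propose is off: $\nu_F$ measures contact with tangent \emph{lines}, not with osculating conics, so it cannot control the relevant Taylor remainder. The paper instead isolates the finitely many osculating conics $O_j$ (for the other conics the degree-$2k$ binary form $G$ satisfies $a(G)\leq 5\leq k$, so Proposition \ref{prop:binary} applies) and shows directly that each $O_j$ meets the $B^{-\tau}$-neighbourhood only in short arcs, confining its near-points to small parallelepipeds counted by Lemma \ref{lem:browning_thm_4.8}; this yields the negligible contribution $O(B^{1-\tau/(2k)})$. The final reduction for $\tau\geq 3.721$ by comparing exponents is the one part of your plan that goes through as stated.
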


\begin{rem*}
Just as in \eqref{eq:on_curve}, our estimates get sharper as the degree $k$ increases. As our bounds are quite weak for low degrees, we have not stated the result that one would get for $k=3$ or $4$.
\end{rem*}

One may view our results in light of the recent progress on rational points close to smooth curves. For a twice continuously differentiable function $f$ defined on a compact interval $I \subset \RR$, define $\tilde{\cN}_f(B,\delta)$ to be the number of triples $(a,b,q) \in \ZZ^3_\prim$ satisfying the conditions
\begin{equation}
\label{eq:smoothcurve_countingproblem}
B/2 \leq q \leq B,\quad \frac{a}{q} \in I,\quad \left|\frac{b}{q}-f\left(\frac{a}{q}\right)\right| \leq \frac{\delta}{B}.
\end{equation}
Given a bound $1 \ll |f''| \ll 1$ on $I$, Huxley \cite{Huxley94} proved that
\begin{equation}
\label{eq:huxley}
\tilde{\cN}_f(B,\delta) \ll \delta^{1-\epsi} B^2 + B.
\end{equation}
In particular, this implies that
\begin{equation}
\label{eq:huxley_smalldelta}
\tilde{\cN}_f(B,\delta) \ll B^{1+\epsi}
\end{equation}
whenever $\delta \leq B^{-1}$.
Refinements by Beresnevich, Dickinson and Velani \cite{Beresnevich-Dickinson-Velani}, Huang \cite{Huang15} and Chow \cite{Chow16} in fact give an asymptotic formula 
\(
\tilde{\cN}_f(B,\delta) \approx \delta B^2
\)
as soon as $\delta \geq B^{-1+\epsi}$. On the other hand, as discussed in \cite{Huang15},  for smaller values of $\delta$, analytic geometry is not enough to characterise the behaviour of $\tilde{\cN}_f(B,\delta)$. As we shall see in \S \ref{sec:proof}, if we restrict to suitable affine patches of the projective curve $C_F$ that are expressed as graphs of smooth functions $f$, then the points counted by $N_\gamma(F,B)$ or $N^*_\gamma(F,B)$ correspond to solutions to the counting problem \eqref{eq:smoothcurve_countingproblem} in which $\delta \approx B^{1-\tau}$. Consequently, we have restricted our attention to the less understood range $\tau \geq 2$ in all of our results. 

In view of the bound \eqref{eq:huxley_smalldelta}, we formulate the following consequence of Theorem \ref{thm:main}. 
\begin{cor}
\label{cor:main}
We have 
\[
N^*_\gamma(F,B) = o(B)
\]
if either $5 \leq k \leq 9$ and $\tau > \frac{9}{2}$ or else $k>9$ and $\tau > \frac{3}{2} \sqrt{k}$.
\end{cor}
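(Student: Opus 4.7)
The corollary is essentially a direct calculation from the simplified form of Theorem \ref{thm:main}, so my plan is to verify that the hypotheses of that simplified bound hold and then determine when each exponent falls strictly below $1$.

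First I would check that the hypothesis $\tau \geq 3.721$ needed for the simplified estimate is satisfied in both cases of the corollary. In the first case $\tau > 9/2 = 4.5 > 3.721$, and in the second case $\tau > \tfrac{3}{2}\sqrt{k} > \tfrac{3}{2}\sqrt{9} = 4.5 > 3.721$. Since $k \geq 5$ is also assumed throughout, Theorem \ref{thm:main} applies in its form
\[
N^*_\gamma(F,B) \ll_{F,\gamma,\epsi} B^{9/(4\tau) + 1 - \tau/k + \epsi} + B^{3/(2\tau) + 2/3 + \epsi}.
\]

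Next I would analyse each exponent separately. For the first term, $o(B)$ requires $9/(4\tau) + 1 - \tau/k < 1$ after absorbing $\epsi$ with room to spare, which rearranges to $\tau^2 > 9k/4$, i.e.\ $\tau > \tfrac{3}{2}\sqrt{k}$. For the second term, $o(B)$ requires $3/(2\tau) + 2/3 < 1$, which is equivalent to $\tau > 9/2$. Hence both exponents can be made strictly less than $1$ (by choosing $\epsi$ small) exactly when $\tau > \max\{\tfrac{9}{2},\tfrac{3}{2}\sqrt{k}\}$.

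Finally I would observe that $\tfrac{3}{2}\sqrt{k} \leq \tfrac{9}{2}$ precisely when $k \leq 9$, so the two stated cases correspond to the two sides of this threshold. In each case we obtain $N^*_\gamma(F,B) \ll B^{1-\eta}$ for some $\eta = \eta(k,\gamma) > 0$, which in particular gives the claimed $o(B)$ bound.

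There is no real obstacle here; the only thing to be slightly careful about is making sure the simplified form of Theorem \ref{thm:main} (rather than the four-term version) is applicable, which is why I record the verification $\tau > 3.721$ at the outset.
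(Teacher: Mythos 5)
Your proposal is correct and is exactly the intended derivation: the paper states Corollary \ref{cor:main} as an immediate consequence of the simplified two-term bound in Theorem \ref{thm:main}, and your verification that $\tau > 3.721$ holds in both cases, together with the elementary computations $9/(4\tau)+1-\tau/k<1 \Leftrightarrow \tau > \tfrac{3}{2}\sqrt{k}$ and $3/(2\tau)+2/3<1 \Leftrightarrow \tau > \tfrac{9}{2}$, is precisely the argument. Nothing to add.
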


One interpretation of the bound in Corollary \ref{cor:main} is that, for algebraic curves, it is possible to go beyond the exponent 1 in \eqref{eq:huxley_smalldelta}. However, another important aspect is that the cited bounds for smooth curves are local in nature, whereas our Theorem \ref{thm:main} is a global result. Indeed, the curvature bound $1 \ll |f''| \ll 1$ can never hold globally on the curve $C_F$ due to the existence of inflection points. 

Let us also discuss the contribution from the exceptional lines 
excluded from consideration in Theorem \ref{thm:main}. 
It is easy to see that the finitely many tangent lines $T_j$ at inflection points contribute at most
\begin{equation}
\label{eq:tangent}
\ll B^{2-\tau/\nu}
\end{equation}
points to $N_\gamma(F,B)$, with $\nu = \nu_F \in [3,k]$ as above. This bound is sharp.
If we impose the condition \eqref{inflection_assumptions}, then the contribution \eqref{eq:tangent} is negligible compared to the term $B^{3/(2\tau)+2/3 + \epsi}$. Some of the other terms in our bound can also be improved under that assumption, giving the following results.
\begin{thm}
\label{thm:main_generic}
Suppose that $k \geq 5$ and $\tau \geq 2$, and that the condition \eqref{inflection_assumptions} holds. Then we have
\[
N_\gamma(F,B) \ll B^{9/(4\tau)+ 1-\tau/k + \epsi} + B^{3/(2\tau) + 2/3 + \epsi}.
\]
\end{thm}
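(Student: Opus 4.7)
The plan is to bootstrap Theorem \ref{thm:main} by (i) restoring the contribution of the excluded inflectional tangents $T_1,\dotsc,T_K$, and (ii) strengthening the two intermediate exponents of Theorem \ref{thm:main} under the hypothesis \eqref{inflection_assumptions}.

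For the tangent-line contribution, under $\nu_F = 3$ each tangent $T_i$ meets $C_F$ at its point of inflection with contact order exactly $3$, so the restriction of $F$ to $T_i$ vanishes to order $3$ there. The primitive integer triples on $T_i$ with $B/2 < \max\{|x|,|y|,|z|\} \leq B$ form a $1$-dimensional lattice of cardinality $\asymp B$, and the condition $|F| \leq B^\gamma$ restricts this lattice to a window of size $\ll B^{1-\tau/3}$ about the direction determined by the inflection point; this is precisely the estimate already recorded in \eqref{eq:tangent} specialised to $\nu = 3$. Summing over the finitely many tangents $T_1,\dotsc,T_K$ yields a total contribution
\[
\ll B^{2 - \tau/3}.
\]
A direct comparison shows this is dominated by $B^{3/(2\tau)+2/3}$ for every $\tau \geq 2$: the inequality $2-\tau/3 \leq 2/3+3/(2\tau)$ is equivalent to $2\tau^2 - 8\tau + 9 \geq 0$, whose discriminant $64-72<0$ forces the quadratic to be strictly positive on the whole real line.

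It remains to sharpen the intermediate bounds. The next task is to reopen the proof of Theorem \ref{thm:main} and track where the exponents $B^{2-\tau/4+\epsi}$ and $B^{2+27/(20\tau)-9\tau/20+\epsi}$ appear. These terms should emerge from a worst-case analysis on subregions around the flexes, where the local order of contact with the tangent is permitted to be as large as $k$: in a Taylor expansion of $g(x,y)=F(x,y,1)$ along the tangent direction at an inflection, the leading vanishing term has order $\nu_i \leq \nu_F$, and under \eqref{inflection_assumptions} this order is uniformly equal to its minimum value $3$. Feeding $\nu_F = 3$ into the relevant local counts should contract each of those intermediate error terms; a term-by-term comparison of exponents then should show that the only surviving contributions in the range $\tau \geq 2,\ k \geq 5$ are $B^{9/(4\tau)+1-\tau/k+\epsi}$ and $B^{3/(2\tau)+2/3+\epsi}$, yielding the stated bound on $N_\gamma(F,B)$.

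The substantive obstacle is this last step: one must isolate, inside the proof of Theorem \ref{thm:main}, precisely the subcases whose bounds explicitly depend on $\nu$, and verify that after setting $\nu = 3$ all the surviving error terms are dominated by one of the two stated exponents. This amounts to a careful audit of the Taylor-expansion and determinant-method inputs used in Theorem \ref{thm:main} rather than a conceptually new argument, but it is the genuine content of the theorem; combined with the tangent-line count above, it delivers the bound.
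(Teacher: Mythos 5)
Your handling of the restored tangent lines matches the paper's: under \eqref{inflection_assumptions} the bound \eqref{eq:tangent} gives a contribution $\ll B^{2-\tau/3}$, and your check that $2-\tau/3 \leq 2/3 + 3/(2\tau)$ for all $\tau$ (via the quadratic $2\tau^2-8\tau+9$ having negative discriminant) is correct. A small quibble: the bounded-height primitive points on a line $T_i$ form a rank-two lattice with $\asymp B^2$ elements, not a one-dimensional family of size $\asymp B$; the estimate $B^{2-\tau/\nu}$ comes from the fact that $F$ restricted to $T_i$ is a binary form vanishing to order $\nu$ at the flex direction, so one direction is constrained to a window of size $B^{1-\tau/\nu}$ while the other contributes $B$. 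But this is in any case the estimate \eqref{eq:tangent} that the paper records without proof, so no harm done.

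The genuine gap is the second half, which you explicitly defer. You assert that feeding $\nu_F=3$ into the proof of Theorem \ref{thm:main} ``should'' eliminate the terms $B^{2-\tau/4+\epsi}$ and $B^{2+27/(20\tau)-9\tau/20+\epsi}$, but you never carry out that audit, and it is precisely where the content of the theorem lies; it cannot be extracted from the statement of Theorem \ref{thm:main}, only from re-entering its proof. In the paper, those two terms arise from the worst case $\nu\to\infty$ in the analysis of linear auxiliary forms (the bound \eqref{eq:lines_final_worst_improved}, with $M \ll B^{9/(4\tau)+\epsi}$). Under \eqref{inflection_assumptions} one instead applies Lemma \ref{lem:roth_case} with $d=3$ --- a Roth's-theorem argument on the intervals where $f''$ is small, giving \eqref{eq:lem_roth_3} --- combines it with the duality bound of Lemma \ref{lem:duality} on the intervals where $f''$ is large, and balances by choosing $C=\min\{M^{2/5}B^{\tau/5},\,M^{4/25}B^{7\tau/25}\}$, arriving at \eqref{eq:lines_final} with exponents $9/(4\tau)$, $2+9/(20\tau)-2\tau/5$ and $2+153/(100\tau)-14\tau/25$; one must then verify that each of these is dominated, for $\tau\geq 2$, $k\geq 5$ and $\tau\leq k$, by $9/(4\tau)+1-\tau/k$ or $3/(2\tau)+2/3$. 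None of this optimisation or exponent comparison appears in your proposal, so what you have is a correct outline of the easy reduction (tangent lines plus the degree $\geq 2$ contributions already in Theorem \ref{thm:main}) together with an unproven claim standing in for the substantive step.
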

\begin{cor}
\label{cor:generic}
Under the assumption \eqref{inflection_assumptions}, we have 
\[
N_\gamma(F,B) = o(B)
\]
if either $5 \leq k \leq 9$ and $\tau > \frac{9}{2}$ or else $k>9$ and $\tau > \frac{3}{2} \sqrt{k}$.
\end{cor}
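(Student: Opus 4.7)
The plan is to derive Corollary \ref{cor:generic} as a direct consequence of Theorem \ref{thm:main_generic}. The bound in that theorem contains exactly two terms, so I would determine for which ranges of $\tau$ (depending on $k$) each exponent is strictly less than $1$; then choosing $\epsi$ sufficiently small yields $N_\gamma(F,B) \ll B^{1-\eta}$ for some $\eta>0$, which is visibly $o(B)$.

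First, I would analyse the second term $B^{3/(2\tau)+2/3+\epsi}$. Solving $3/(2\tau) + 2/3 < 1$ gives $\tau > 9/2$. This yields the threshold appearing in the statement for small $k$. Next, I would analyse the first term $B^{9/(4\tau)+1-\tau/k+\epsi}$. Solving $9/(4\tau) + 1 - \tau/k < 1$ gives $\tau^2 > 9k/4$, equivalently $\tau > \tfrac{3}{2}\sqrt{k}$.

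To combine the two conditions I would compare the two thresholds: $\tfrac{3}{2}\sqrt{k} \le 9/2$ precisely when $k \le 9$. Therefore, in the regime $5 \le k \le 9$, the stronger condition is $\tau > 9/2$, while for $k > 9$ the stronger condition is $\tau > \tfrac{3}{2}\sqrt{k}$. In either case both exponents of $B$ (ignoring the $\epsi$) are strictly less than $1$, so one may fix $\epsi$ small enough that each exponent is still strictly less than $1$, proving $N_\gamma(F,B) = o(B)$.

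There is no real obstacle here beyond the routine comparison of the two exponents; Theorem \ref{thm:main_generic} does all the substantive work, and the assumption \eqref{inflection_assumptions} is what allows one to work with $N_\gamma(F,B)$ itself rather than the modified count $N^*_\gamma(F,B)$, since under $\nu_F=3$ the contribution \eqref{eq:tangent} from the excluded tangent lines is at most $B^{2-\tau/3}$, which is already $o(B)$ as soon as $\tau > 3$, and hence in particular in both regimes of the corollary.
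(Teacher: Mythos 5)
Your proposal is correct and matches the paper's (implicit) argument: the corollary is read off directly from Theorem \ref{thm:main_generic} by requiring both exponents $3/(2\tau)+2/3$ and $9/(4\tau)+1-\tau/k$ to be strictly less than $1$, which gives exactly the thresholds $\tau>\frac{9}{2}$ and $\tau>\frac{3}{2}\sqrt{k}$, with the crossover at $k=9$. Your closing remark about the tangent-line contribution $B^{2-\tau/3}$ is correct but redundant, since Theorem \ref{thm:main_generic} already bounds $N_\gamma(F,B)$ itself under \eqref{inflection_assumptions}.
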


Here we have suppressed the dependence on $F,\gamma$ and $\epsi$ in the implied constants, as we shall mostly do in the rest of the paper. Instead we shall occasionally use a notation such as $\ll_k$ to indicate a bound where the implied constant is independent of the coefficients of $F$.

\begin{rem*}
One may ask how our bounds compare to the more naive approach of separately counting rational points on each of the level surfaces $F(x,y,z) = c$, for $|c| \leq B^\gamma$. As explained in the introduction to \cite{Heath-Brown09}, the contribution from each such surface can be shown to be of order $B^{2/\sqrt{k}+\epsi}$. This would give a bound for $N_\gamma(F,B)$ of order $B^{\gamma + 2/\sqrt{k}+\epsi}$, which is clearly inferior to the bounds presented here in all cases. 
\end{rem*}

Our main tool in the proof of Theorem \ref{thm:main} will be the approximate determinant method introduced by Heath-Brown in \cite{Heath-Brown09}. We develop this method in a general setting in \S \ref{sec:determinant}. In the course of the investigation, we will also need to consider the corresponding problem for binary forms, a Thue inequality. In \S \ref{sec:binary}, we derive some bounds for the number of solutions to such inequalities that will be needed. The pieces are put together in \S 4 to give the final estimate. The outcome of the determinant method is that the points counted by $N_\gamma(F,B)$ lie on a number of auxiliary curves. Estimating the contribution from such a curve will require different methods depending on whether it has degree 1, 2 or higher. In the linear case we use Huxley's bounds from \cite{Huxley94}, the proof of which depends on what Huxley calls Swinnerton-Dyer's determinant method. In the quadratic case, the results for Thue inequalities from \S \ref{sec:binary} come into play. Finally, auxiliary curves of higher degree are handled via the bound \eqref{eq:on_curve}. 

\section{The approximate determinant method}
\label{sec:determinant}

Given a non-singular form $F \in \ZZ[x_1,\dotsc,x_n]$ of degree $k \geq 3$ and real numbers $B \geq 1$ and $1\leq \gamma < k$, we consider the set of inequalities
\begin{equation}
\label{eq:n-ary_ineq}
|F(x_1,\dotsc,x_n)| \leq B^\gamma, \quad B/2 < \max_{i=1,\dotsc,n} |x_i| \leq B. 
\end{equation}
Let $S_\gamma(F,B)$ be the set of integer solutions $\xx \in \ZZ^n$ to the inequalities \eqref{eq:n-ary_ineq}. The following theorem may be viewed as a version of \cite[Thm. 14]{Heath-Brown02} for inequalities. It generalizes the results in \cite{Heath-Brown09} and \cite{Marmon11}.

\begin{thm}
\label{thm:approximative_14}
Let $F(x_1,\dotsc,x_n)$ be a non-singular homogeneous polynomial of degree $k\geq 3$. For any $\gamma \in [0,k-n/(n-1))$, put
\[
\theta = \theta(n,k,\gamma) = (n-2)\left(\frac{n}{n-1}\right)^{(n-1)/(n-2)}(k-\gamma)^{-1/(n-2)}. 
\]
Then, for any $\epsi >0$, there is a positive integer $D$, depending only on $k$, $\gamma$ and $\epsi$, and a collection of $O_F(B^{\theta + \epsi})$ forms $A_i$ of degree $D$, such that every point $\xx \in S_\gamma(F,B)$ is a solution to one of the equations $A_i(\xx) = 0$. Moreover, the coefficients of the forms $A_i$ may be chosen as coprime integers of size $O(B^c)$, where $c$ and the implied constant depend only on $k$, $\gamma$ and $\epsi$. 
\end{thm}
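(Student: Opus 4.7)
The plan is to follow the strategy of the global determinant method as introduced by Heath-Brown in \cite[Thm.~14]{Heath-Brown02} and extended to the approximate setting in \cite{Heath-Brown09, Marmon11}. The overall approach is to cover $S_\gamma(F,B)$ by a controlled number of "patches" (in a $p$-adic or real-analytic sense) and to show that within each patch, all points lie on a common hypersurface of bounded degree $D$. Counting the patches then produces the bound $O_F(B^{\theta+\epsi})$ on the number of auxiliary forms.

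The key technical step is a determinant estimate. For a patch containing $R = \binom{D+n-1}{n-1}$ distinct points $\xx^{(1)},\dotsc,\xx^{(R)} \in S_\gamma(F,B)$, form the $R \times R$ matrix
\[
M \;=\; \bigl(m_i(\xx^{(j)})\bigr)_{1 \leq i,j \leq R},
\]
whose rows are indexed by the monomials $m_1,\dotsc,m_R$ of degree $D$. Since the entries are integers, the condition $|\det M| < 1$ forces $\det M = 0$, and a nontrivial linear combination of the rows yields a form $A$ of degree $D$ vanishing at all the $\xx^{(j)}$. To produce such an upper bound, one performs row operations that replace each monomial $m_i$ of degree $\geq k$ by its remainder upon division by $F$ in $\ZZ[\xx]$; each such reduction step in a given column absorbs a factor of $F(\xx^{(j)}) = O(B^\gamma)$ and lowers the effective monomial degree by $k$. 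Because $F$ is non-singular, the Hilbert function of $\ZZ[\xx]/(F)$ in degree $D$ equals $\binom{D+n-1}{n-1} - \binom{D-k+n-1}{n-1}$ for $D \geq k$, and this controls how many reductions are available. Combining these reductions with Hadamard's inequality and the trivial bound $|m_i(\xx^{(j)})| \leq B^D$, one arrives at an estimate of the form
\[
|\det M| \;\ll\; B^{E(D,n)\,+\,\gamma\, Q(D,n,k)},
\]
where $E$ collects the contributions of the unreduced rows and $Q$ counts the number of reduction steps performed.

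Imposing $|\det M| < 1$ translates into an upper bound on the size of each patch; partitioning $S_\gamma(F,B)$ accordingly and optimising $D$ gives the claimed exponent $\theta(n,k,\gamma) = (n-2)(n/(n-1))^{(n-1)/(n-2)} (k-\gamma)^{-1/(n-2)}$. The appearance of $k-\gamma$ in place of $k$ reflects precisely the $B^\gamma$ saving gained per reduction step, and the range $\gamma < k - n/(n-1)$ is exactly the threshold beyond which the determinant bound can no longer be forced to be strictly less than $1$. The main obstacle will be the careful combinatorial accounting in the Hadamard estimate and the verification that the optimisation yields the stated $\theta$ uniformly in $F$ (with dependence only on $k,\gamma,\epsi$ in both $D$ and the coefficient size bound). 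Finally, the existence of coprime integer coefficients of size $O(B^c)$ for each $A_i$ follows by a standard Siegel lemma / Cramer's rule extraction from the kernel of the relevant submatrix, as carried out in \cite{Marmon11}.
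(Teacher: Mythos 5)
There is a genuine gap, and it sits exactly at the key technical step. The determinant bound you propose uses only two inputs: the trivial bound $|m_i(\xx^{(j)})| \leq B^D$ and the saving of one factor $B^{k-\gamma}$ per reduction of a monomial modulo $F$ (counted via the Hilbert function of $\ZZ[\xx]/(F)$). That saving is not enough, for any choice of $D$. Iterating the reductions, the number of rows reducible $j$ times is about $\binom{D-jk+n-1}{n-1}$, so the total exponent you can reach is
\[
sD - (k-\gamma)\sum_{j\geq 1}\binom{D-jk+n-1}{n-1} \;\approx\; \frac{D^n}{n!}\Bigl(n-\frac{k-\gamma}{k}\Bigr),
\]
which is positive for every $n\geq 2$, $k\geq 3$ and $\gamma \geq 0$. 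Hence $|\det M|<1$ can never be forced by this estimate, and since your bound $B^{E+\gamma Q}$ contains no parameter measuring the size of a patch, the sentence ``imposing $|\det M|<1$ translates into an upper bound on the size of each patch'' has nothing to act on: shrinking the patches does not change your determinant bound at all. The missing idea is precisely that the smallness of the patch must enter the determinant estimate. In the paper this is done by working in the normalized coordinates $t_i = x_i/x_n$, covering $[-1,1]^{n-1}$ by boxes of side $\asymp M^{-1}$, Taylor-expanding around a corner of the box, and --- crucially --- using an implicit-function lemma (Lemma \ref{lem:implicit}) to eliminate the offset $u_{n-1}$ in favour of the small quantity $\xi = F(\ttt,1) \ll B^{\gamma-k}$. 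The entries of the reduced determinant are then polynomials in the small quantities $u_1,\dotsc,u_{n-2}$ (size $M^{-1}$) and $\xi$ (size $B^{\gamma-k}$), and a weighted Hadamard bound (Heath-Brown's Lemma 3) together with the lattice-point counts in weighted simplices (Proposition \ref{prop:volumes}) gives a saving $M^{-f}$ with $f \asymp \alpha^{1/(n-1)}D^n$, which does beat $B^{sD}$ for a suitable $M \approx B^{(k-\gamma)/\tilde\alpha}$. This is where the hypothesis $\gamma < k - n/(n-1)$ is actually used.

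A second, related omission: your proposal never explains where the exponent $\theta=(n-2)\bigl(\tfrac{n}{n-1}\bigr)^{(n-1)/(n-2)}(k-\gamma)^{-1/(n-2)}$ comes from. In the paper it is the count of \emph{relevant} patches: a covering lemma (Lemma \ref{lem:covering}), exploiting non-singularity of $F$, shows that only $O(M^{n-2})$ of the $\asymp M^{n-1}$ boxes can contain points with $|F(\ttt,1)|$ small, because some partial derivative is $\gg 1$ there and pins down one coordinate up to $O(1)$ choices; combined with the optimal choice of $\alpha$ (equivalently of $M$) this yields $M^{n-2} \approx B^{\theta+\epsi}$. Your Hilbert-function reduction is a legitimate mechanism in the determinant method for points lying exactly on the hypersurface (as in Heath-Brown's Theorem 14 and Salberger's global method, where the patches are $p$-adic residue classes and the gain is divisibility by powers of $p$), but transplanted to the inequality $|F(\xx)|\leq B^\gamma$ with purely archimedean Hadamard bounds it does not close. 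The Siegel-lemma extraction of coprime integer coefficients of size $O(B^c)$ at the end is fine and matches the paper.
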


To prove Theorem \ref{thm:approximative_14}, we need two lemmas, generalizing \cite[Lemmata 1 and 2]{Heath-Brown09}.
\begin{lemma}
\label{lem:covering}
Let $F(x_1,\dotsc,x_n)$ be a non-singular homogeneous polynomial of degree $k$. There is a natural number $M_0$, depending only on $F$, with the following property: if $[-1,1]^{n-1}$ is covered by smaller $(n-1)$-cubes
\[
\cC_\aa =  [a_1,a_1+(M_0 M)^{-1}]\times \dotsb \times [a_{n-1},a_{n-1}+(M_0 M)^{-1}],
\]
for some positive integer $M$, then the number of such boxes containing a solution $(t_1,\dotsc,t_{n-1})\in \RR^{n-1}$ to the inequality
\begin{equation}
\label{eq:F_ineq}
|F(t_1,\dotsc,t_{n-1},1)| \leq \frac{1}{M_0 M}
\end{equation}
is at most $O(M^{n-2})$. More precisely, if $\cC = \cC_\aa$ is such a box containing a solution to \eqref{eq:F_ineq}, then for some index $i \in \{1,\dotsc,n-1\}$ we have
\[
\left\vert \frac{\partial F}{\partial x_i}(t_1,\dotsc,t_{n-1},1) \right\vert \gg 1
\] 
throughout $\cC$, and there are only $O(1)$ choices of $a'_i$ such that the box $\cC_{\aa'}$ corresponding to $\aa' = (a_1,\dotsc,a_{i-1},a'_i,\dotsc,a_{n-1})$ also contains a solution.

\end{lemma}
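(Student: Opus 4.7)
The plan is to convert the non-singularity of $F$ into quantitative transversality on the affine patch $g(\ttt) := F(t_1,\dotsc,t_{n-1},1)$, and then to reduce the per-stack count to a one-variable polynomial estimate controlled by $\deg F$.

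The qualitative input comes from Euler's identity. If $g(\ttt^*) = 0$ at $\ttt^* \in [-1,1]^{n-1}$ and $\partial g/\partial t_i(\ttt^*) = 0$ for every $i = 1,\dotsc,n-1$, then evaluating $\sum_{j=1}^n x_j\,\partial F/\partial x_j = kF$ at $(\ttt^*,1)$ forces $\partial F/\partial x_n(\ttt^*,1) = 0$ as well, contradicting non-singularity of $F$ at the non-zero point $(\ttt^*,1)$. Hence $\ttt \mapsto \max_i |\partial g/\partial t_i(\ttt)|$ is strictly positive on the compact zero set $\{\ttt \in [-1,1]^{n-1}: g(\ttt)=0\}$, and a routine compactness argument upgrades this to constants $c_0, \delta_0 > 0$, depending only on $F$, such that $|g(\ttt)| \leq \delta_0$ already forces $|\partial g/\partial t_i(\ttt)| \geq c_0$ for some $i$. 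I then choose $M_0$ (depending only on $F$) large enough that $M_0^{-1} \leq \delta_0$ and, using the uniform bound on the second-order partials $|\partial^2 g/\partial t_j \partial t_i|$ over $[-1,1]^{n-1}$, so that the oscillation of each $\partial g/\partial t_i$ across any cube of side $(M_0 M)^{-1}$ is at most $c_0/2$. For a cube $\cC = \cC_\aa$ containing a solution $\ttt^*$ of \eqref{eq:F_ineq}, picking an $i$ with $|\partial g/\partial t_i(\ttt^*)| \geq c_0$ and applying the mean-value estimate then yields $|\partial g/\partial t_i(\ttt)| \geq c_0/2$ throughout $\cC$, which establishes the first assertion.

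For the count, I fix a direction $i$ and a base $(a_1,\dotsc,\widehat{a_i},\dotsc,a_{n-1})$ and introduce the one-variable polynomial
\[
h(t) = g\bigl(a_1 + \tfrac{1}{2 M_0 M},\dotsc, t,\dotsc, a_{n-1} + \tfrac{1}{2 M_0 M}\bigr)
\]
of degree at most $k$. For any cube $\cC_{\aa'}$ in the stack above this base on which $|\partial g/\partial t_i| \geq c_0/2$ and which contains a solution, the midpoint $\mu = a'_i + \tfrac{1}{2 M_0 M}$ of the cell satisfies $|h(\mu)| = O(1/(M_0 M))$ (from Lipschitz proximity to the solution point via the gradient bound on $g$) together with $|h'(\mu)| \geq c_0/2$. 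The set $\{t \in [-1,1] : |h'(t)| \geq c_0/2\}$ has $O_k(1)$ connected components, since $h'$ is a real polynomial of degree at most $k-1$; on each such component $h$ is monotone with slope at least $c_0/2$, so the sublevel set $\{|h(t)| = O(1/(M_0 M))\}$ meets it in an interval of length $O(1/(M_0 M))$. Thus the admissible midpoints lie in a subset of $[-1,1]$ of measure $O_k(1/(M_0 M))$, and being $(M_0 M)^{-1}$-separated, only $O_k(1) = O_F(1)$ of them occur. Summing over the $O((M_0 M)^{n-2})$ bases and the $n-1$ directions $i$ produces the total bound $O_F(M^{n-2})$, since every cube containing a solution belongs to at least one such good stack by the preceding paragraph.

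The main obstacle I expect is the per-base step: upper-bounding the number of components of $\{|h'| \geq c_0/2\}$ by a constant depending only on $k$ is the decisive use of the algebraic structure of $F$. For a generic smooth $g$ this could fail, whereas here it follows immediately from $h'$ being a polynomial of degree at most $k-1$, after which the monotonicity estimate on each component closes the argument.
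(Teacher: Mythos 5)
Your argument is essentially correct, and it supplies precisely the details that the paper omits (the authors defer to \cite{Heath-Brown09}, saying the proof can be extrapolated from the corresponding lemmas there). The first step — Euler's identity plus a compactness argument to produce constants $c_0,\delta_0$ depending only on $F$ such that $|g(\ttt)|\leq\delta_0$ forces $|\partial g/\partial t_i(\ttt)|\geq c_0$ for some $i$, followed by choosing $M_0$ so that each partial oscillates by at most $c_0/2$ over a cell — is sound and gives the gradient assertion. Your per-stack count, restricting $g$ to the line of cell midpoints so that $h$ and $h'$ are univariate polynomials of degree $O_k(1)$, whence $\{|h'|\geq c_0/2\}$ has $O_k(1)$ components on each of which $h$ is monotone with slope at least $c_0/2$, is a clean way to get $O_F(1)$ good cells per stack; this use of the algebraic structure is genuinely needed for general $F$, since the chosen partial need not remain large along the whole stack, so a bare monotonicity argument along the stack would not close. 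Summing over bases and the $n-1$ directions then gives $O_F(M^{n-2})$ as required.

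Two remarks. First, a phrasing point: the measure bound alone does not control the number of $(M_0M)^{-1}$-separated midpoints (a set of small measure can contain many separated points); what you actually use, and do have from the preceding sentence, is that the admissible midpoints lie in $O_k(1)$ intervals each of length $O_F\bigl((M_0M)^{-1}\bigr)$ — state it that way. Second, note that what you prove is a bound on the cells of an $i$-stack that contain a solution \emph{and} on which $|\partial g/\partial t_i|\gg 1$ throughout; the final clause of the lemma as literally written (all solution cells in the $i$-stack through $\cC$, with no gradient condition on those other cells) is stronger, and is in fact doubtful in general: if the curve has a vertical-tangent point and a horizontal-tangent point at the same height, a single row can contain on the order of $M^{1/2}$ solution cells coming from the nearly horizontal branch. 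Your restricted version is exactly what yields the $O(M^{n-2})$ count and is also all that is used later in the paper, where the lemma is only invoked after restricting to a square on which one fixed partial is $\gg 1$ throughout, cf.\ \eqref{eq:partial_lower} and the proof of Lemma \ref{lem:successive_min}; so this is a defect of the statement's phrasing rather than a gap in your argument, but you should be aware of the discrepancy.
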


\begin{lemma}
\label{lem:implicit}
In the notation of Lemma \ref{lem:covering}, suppose that
\[
\cC=[a_1,a_1+(M_0 M)^{-1}]\times \dotsb \times [a_{n-1},a_{n-1}+(M_0 M)^{-1}]
\]
contains a solution to \eqref{eq:F_ineq}, and that $\left\vert \frac{\partial F}{\partial x_{n-1}}(a_1,\dotsc,a_{n-1},1)\right\vert \gg 1$.

If $(t_1,\dotsc,t_{n-1}) = (a_1+u_1,\dotsc,a_{n-1}+u_{n-1}) \in \cC$, put
\[
w=F(t_1,\dotsc,t_{n-1},1)-F(a_1,\dotsc,a_{n-1},1).
\]
Then there exist, for each $m \in \NN$, two polynomials
\[
\Phi_m(u_1,\dotsc,u_{n-2},w), \quad  \Psi_m(u_1,\dotsc,u_{n-1},w),
\]
such that $\Phi_m$ has no constant term and $\Psi_m$ has no term of degree less than $m$, and such that the relation
\[
u_{n-1} = \Phi_m(u_1,\dotsc,u_{n-2},w) + u_{n-1} \Psi_m(u_1,\dotsc,u_{n-1},w)
\]
holds throughout $\cC$. Moreover, $\Phi_m$ and $\Psi_m$ have degree $O_{m}(1)$ and coefficients of size $O_m(1)$.
\end{lemma}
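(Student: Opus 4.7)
The plan is to prove the lemma by induction on $m$, deriving the base case $m=1$ directly from a Taylor expansion of $F$ and carrying out the inductive step via a two-step substitution that raises the order of vanishing of $\Psi$ by exactly one.

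For the base case, I would Taylor-expand $F$ around $(a_1,\dots,a_{n-1},1)$ to write
\[
F(a_1+u_1,\dots,a_{n-1}+u_{n-1},1) - F(a_1,\dots,a_{n-1},1) = \sum_{i=1}^{n-1} F_i u_i + Q(\uu),
\]
where $F_i = (\partial F/\partial x_i)(a_1,\dots,a_{n-1},1)$ and $Q$ has total degree at least $2$. I would decompose $Q(\uu) = Q_0(u_1,\dots,u_{n-2}) + u_{n-1}\tilde Q(u_1,\dots,u_{n-1})$ into its $u_{n-1}$-free and $u_{n-1}$-containing parts, and set the expansion equal to $w$ throughout $\cC$. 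Solving for $u_{n-1}$ using the hypothesis $|F_{n-1}|\gg 1$ then produces the claimed relation with
\[
\Phi_1 = F_{n-1}^{-1}\Bigl(w - \sum_{i<n-1} F_i u_i - Q_0\Bigr), \qquad \Psi_1 = -F_{n-1}^{-1}\tilde Q,
\]
where $\Phi_1$ has no constant term, $\Psi_1$ has no term of degree less than $1$, and the coefficients are bounded since $|F_{n-1}^{-1}| = O(1)$.

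For the inductive step, I would assume $u_{n-1} = \Phi_m + u_{n-1}\Psi_m$ on $\cC$ and construct $\Phi_{m+1},\Psi_{m+1}$ by two successive substitutions. The first one plugs the base-case identity into the $u_{n-1}$-slot of $\Psi_m$ itself, giving
\[
\Psi_m(u_1,\dots,u_{n-2},\Phi_1 + u_{n-1}\Psi_1,w) = A(u_1,\dots,u_{n-2},w) + B(u_1,\dots,u_{n-1},w)
\]
on $\cC$, where $A := \Psi_m(u_1,\dots,u_{n-2},\Phi_1,w)$ is the $u_{n-1}$-free part and $B$ collects the remaining terms. Expanding each $(\Phi_1 + u_{n-1}\Psi_1)^j$ binomially and using $\deg\Phi_1 \geq 1$ and $\deg(u_{n-1}\Psi_1)\geq 2$, a careful degree count shows every monomial in $A$ has degree $\geq m$ while every monomial in $B$ has degree $\geq m+1$. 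The second substitution applies the $m$-th identity itself to the standalone $u_{n-1}$ factor in $u_{n-1}A$, producing
\[
u_{n-1} = \Phi_m + u_{n-1}(A+B) = \Phi_m(1+A) + u_{n-1}(\Psi_m A + B),
\]
after which I set $\Phi_{m+1} := \Phi_m(1+A)$ and $\Psi_{m+1} := \Psi_m A + B$. Then $\Phi_{m+1}$ lies in $\RR[u_1,\dots,u_{n-2},w]$ and still has no constant term, while $\Psi_m A$ has degree $\geq 2m \geq m+1$ and $B$ has degree $\geq m+1$, so $\Psi_{m+1}$ has no term of degree less than $m+1$.

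The main obstacle is precisely securing that extra unit of vanishing in the inductive step. A naive single substitution---either of the standalone $u_{n-1}$ or inside $\Psi_m$---would only preserve the degree bound, since $\Psi_m$ may contain degree-$m$ monomials with exactly one factor of $u_{n-1}$ whose replacement remains of degree $m$. The two-step procedure circumvents this by first isolating the low-degree $u_{n-1}$-free residue $A$ and then absorbing $u_{n-1}A$ into $\Phi_{m+1}$ via a second use of the $m$-th identity. A routine bookkeeping argument---at each step the degrees multiply by factors depending only on $k$ and the coefficients grow only polynomially---then gives the required $O_m(1)$ bounds on the degrees of $\Phi_m,\Psi_m$ and on the sizes of their coefficients after $m$ iterations.
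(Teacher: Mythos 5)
Your proof is correct. The paper itself omits the proof of this lemma, deferring to the corresponding Lemma 2 of \cite{Heath-Brown09}, which uses the same induction-by-substitution scheme you employ (Taylor expansion plus the lower bound on $\partial F/\partial x_{n-1}$ for the base case, then repeated substitution of the identity into itself to raise the order of vanishing of $\Psi_m$); your two-step substitution --- first replacing $u_{n-1}$ inside $\Psi_m$ by $\Phi_1+u_{n-1}\Psi_1$, then the standalone $u_{n-1}$ by $\Phi_m+u_{n-1}\Psi_m$ --- correctly resolves the degree-stagnation issue you point out, since every term of $B$ carries a factor $u_{n-1}\Psi_1$ of degree at least two and $\Psi_m A$ has degree at least $2m\geq m+1$.
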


We omit the proofs of Lemmas \ref{lem:covering} and \ref{lem:implicit}, which may easily be extrapolated from the proofs of the corresponding results in \cite{Heath-Brown09}.

In the proof of Theorem \ref{thm:approximative_14}, we shall need estimates for certain sets of monomials. To this end we define, for any natural number $m$, and any real numbers $\alpha, \nu \in [1,\infty)$, the set 
\[
D_{m,\alpha,\nu} = \left\{(x_1,\dotsc,x_m) \in \ZZ^n: x_1,\dotsc,x_m \geq 0,\ \left(\sum_{i=1}^{m-1} x_i \right) + \alpha x_m \leq \nu\right\}. 
\]
Being a subset of $\ZZ_{\geq 0}^{m}$, we may identify $D_{m,\alpha,\nu}$ with a set of monomials in $m$ variables. The quantities we shall be interested in are then the number of monomials in this set and the accumulated sum of their degrees, \emph{i.e.}
\begin{gather*}
\Sigma_{m,\alpha,\nu} = \sum_{\xx \in D_{m,\alpha,\nu}} 1, \text{ and} \\
\Phi_{m,\alpha,\nu} = \sum_{\xx \in D_{m,\alpha,\nu}} (x_1+\dotsb+x_{m-1}+\alpha x_m).
\end{gather*}
We have $D_{m,\alpha,\nu} = \Delta_{m,\alpha,\nu} \cap \ZZ^m$, where
\[
\Delta_{m,\alpha,\nu} = \left\{(x_1,\dotsc,x_m) \in \RR^n:\ x_1,\dotsc,x_m \geq 0,\ \left(\sum_{i=1}^{m-1} x_i \right) + \alpha x_m \leq \nu \right\} 
\]
is an $m$-simplex in $\RR^n$. We shall obtain estimates for $\Sigma_{m,\alpha,\nu}$ and $\Phi_{m,\alpha,\nu}$ via the corresponding quantities
\begin{gather*}
V_{m,\alpha,\nu} = \mathrm{vol}(\Delta_{m,\alpha,\nu}) = \int_{\Delta_{m,\alpha,\nu}} \mathrm{d}\xx,\\
C_{m,\alpha,\nu} =  \int_{\Delta_{m,\alpha,\nu}} (x_1+\dotsb+x_{m-1}+\alpha x_m) \mathrm{d}\xx.
\end{gather*}

\begin{prop}
\label{prop:volumes}
We have
\begin{gather}
V_{m,\alpha,\nu} = \frac{\nu^m}{\alpha m!}, \label{eq:V} \\
C_{m,\alpha,\nu} = \frac{\nu^{m+1}}{\alpha (m+1)(m-1)!}. \label{eq:C}
\end{gather}
For fixed $m, \alpha$, we have
\begin{gather}
\Sigma_{m,\alpha,\nu} = \frac{\nu^m}{\alpha m!} + O_{m,\alpha} \left(\nu^{m-1}\right) \label{eq:Sigma}, \\
\Phi_{m,\alpha,\nu} = \frac{\nu^{m+1}}{\alpha (m+1)(m-1)!} + O_{m,\alpha} \left(\nu^{m}\right). \label{eq:Phi}
\end{gather}
as $\nu \to \infty$.
\end{prop}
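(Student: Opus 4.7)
The plan splits naturally into two halves: first the exact integral formulas \eqref{eq:V} and \eqref{eq:C}, and then the discrete approximations \eqref{eq:Sigma} and \eqref{eq:Phi}.

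For the integrals, I would apply the linear substitution $y_i = x_i$ for $i<m$ and $y_m = \alpha x_m$, whose Jacobian is the constant $1/\alpha$. This carries $\Delta_{m,\alpha,\nu}$ bijectively onto the standard right simplex $\Sigma_m(\nu) = \{\yy \in \RR^m_{\geq 0} : y_1 + \cdots + y_m \leq \nu\}$, whose volume is the classical $\nu^m/m!$. This gives \eqref{eq:V} immediately. For \eqref{eq:C}, the integrand $x_1+\cdots+x_{m-1}+\alpha x_m$ becomes $y_1+\cdots+y_m$; by symmetry this contributes $m$ times the integral of $y_1$ alone. Computing $\int_{\Sigma_m(\nu)} y_1 \, \mathrm{d}\yy = \int_0^\nu y_1 \cdot (\nu-y_1)^{m-1}/(m-1)! \, \mathrm{d}y_1 = \nu^{m+1}/(m+1)!$ and multiplying by $m/\alpha$ produces the claimed formula.

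For the lattice-point estimate \eqref{eq:Sigma}, I would invoke the standard Lipschitz principle: for a bounded region in $\RR^m$ whose boundary is contained in a union of Lipschitz images of bounded sets, the number of integer points differs from the volume by a quantity controlled by the $(m-1)$-dimensional measure of the boundary. Here $\partial \Delta_{m,\alpha,\nu}$ consists of $m+1$ facets, each of $(m-1)$-dimensional measure $O_{m,\alpha}(\nu^{m-1})$ (computed by the same linear substitution as above), which yields the error term $O_{m,\alpha}(\nu^{m-1})$. An equally viable alternative is to induct on $m$ by slicing along $x_m \in \ZZ \cap [0,\nu/\alpha]$, reducing the count in each slice to $\Sigma_{m-1,1,\nu - \alpha x_m}$ and applying the induction hypothesis; summing over slices and absorbing the discrepancy between $\nu - \alpha x_m$ and its integer part into the error gives the same bound.

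For \eqref{eq:Phi} I would combine \eqref{eq:Sigma} with a cube-by-cube comparison. Writing $\ell(\xx) = x_1 + \cdots + x_{m-1} + \alpha x_m$ and tiling $\RR^m$ by the unit cubes $\cc + [0,1)^m$ indexed by $\cc \in \ZZ^m$, the difference $\Phi_{m,\alpha,\nu} - C_{m,\alpha,\nu}$ decomposes into: a contribution from cubes fully contained in $\Delta_{m,\alpha,\nu}$, where $|\ell(\cc) - \ell(\xx)| \leq (m-1)+\alpha = O_{m,\alpha}(1)$ and the total contribution is $O_{m,\alpha}(\Sigma_{m,\alpha,\nu}) = O_{m,\alpha}(\nu^m)$; and a contribution from cubes meeting the boundary, of which there are $O_{m,\alpha}(\nu^{m-1})$ and on each $|\ell|$ is at most $O_\alpha(\nu)$, giving again $O_{m,\alpha}(\nu^m)$. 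Together these furnish the error bound in \eqref{eq:Phi}.

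I do not anticipate any real obstacle: the content is entirely routine once the right substitution and the Lipschitz principle are invoked. The only care needed is to verify that all implied constants depend solely on $m$ and $\alpha$, which is transparent from the linear nature of the substitution and the explicit shape of the facets of $\Delta_{m,\alpha,\nu}$.
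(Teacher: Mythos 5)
Your proof is correct, but the error-term analysis follows a different route from the paper's. For the exact formulas \eqref{eq:V} and \eqref{eq:C} the two arguments are essentially the same routine change of variables (the paper first rescales to $V_{m,1,1}$, $C_{m,1,1}$ and then substitutes $s_j=\sum_{i\leq j}x_i$; you map directly onto the standard simplex and use symmetry -- both computations check out). The real difference is in \eqref{eq:Sigma} and \eqref{eq:Phi}: you invoke the Davenport/Lipschitz principle together with a count of unit cubes meeting the boundary, whereas the paper avoids any boundary-measure input altogether by attaching a unit cube to each lattice point of $\Delta_{m,\alpha,\nu}$ and observing that the resulting union $K_{m,\alpha,\nu}$ is sandwiched between $\Delta_{m,\alpha,\nu}$ and the slightly enlarged simplex $\Delta_{m,\alpha,\nu+m-1+\alpha}$; this gives $V_{m,\alpha,\nu}\leq \Sigma_{m,\alpha,\nu}\leq V_{m,\alpha,\nu+m-1+\alpha}$, and the same one-sided comparison combined with the bound $g(\xx)\leq g(\ttt)\leq g(\xx)+m-1+\alpha$ on each cube yields \eqref{eq:Phi} directly from \eqref{eq:C} evaluated at the shifted parameter. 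The paper's dilation sandwich is thus more self-contained and exploits the special shape of the simplex, while your argument is a heavier but more flexible general-purpose tool (it would apply verbatim to any region with Lipschitz boundary); your interior/boundary cube decomposition for \eqref{eq:Phi} is also sound, provided you note -- as you implicitly do -- that the boundary-cube count comes from the same Lipschitz parameterization of the $m+1$ facets, with constants depending only on $m$ and $\alpha$.
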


\begin{proof}
It is easily seen that
\[
V_{m,\alpha,\nu} = \frac{\nu^m}{\alpha} V_{m,1,1} \quad \text{and}  \quad C_{m,\alpha,\nu} = \frac{\nu^{m+1}}{\alpha} C_{m,1,1}.
\]
By the variable change $s_j = \sum_{i\leq j} x_i, j = 1,\dotsc,m$, one may easily evaluate $V_{m,1,1}$ and $C_{m,1,1}$ as iterated integrals. Thus, we have
\begin{gather*}
V_{m,1,1} = \int_{0\leq s_1 \leq \dotsb \leq s_m \leq 1} \mathrm{d}\bf{s} = \frac{1}{m!},\\
C_{m,1,1} = \int_{0\leq s_1 \leq \dotsb \leq s_m \leq 1} s_n \mathrm{d}\bf{s} = \frac{1}{(m+1)(m-1)!},
\end{gather*}
and \eqref{eq:V} and \eqref{eq:C} are proven. 

Let $K_{m,\alpha,\nu}$ be the set obtained by attaching a unit $m$-cube to each lattice point in $\Delta_{m,\alpha,\nu}$ in the following fashion:
\[
K_{m,\alpha,\nu} = \bigcup_{\xx \in \Delta_{m,\alpha,\nu} \cap \ZZ^m} [x_1,x_1+1] \times \dotsb \times [x_m,x_m+1]. 
\]
Then we have $\Delta_{m,\alpha,\nu} \subseteq K_{m,\alpha,\nu} \subseteq \Delta_{m,\alpha,\nu+m-1+\alpha}$, whence
\begin{gather*}
V_{m,\alpha,\nu} \leq \Sigma_{m,\alpha,\nu} \leq V_{m,\alpha,\nu+m-1+\alpha}.
\end{gather*}
Inserting \eqref{eq:V}, we obtain the estimate \eqref{eq:Sigma}.

To prove \eqref{eq:Phi}, we put $g(\xx) = x_1 + \dotsb + x_{m-1} + \alpha x_m$ for brevity. If $\xx \in D_{m,\alpha,\nu}$ and $x_i \leq t_i \leq x_i+1$ for $i=1,\dotsc,m$, we have
\[
g(\xx) \leq g(\mathbf{t}) \leq g(\xx) + m-1+\alpha. 
\]
Consequently, on the one hand
\begin{align*}
C_{m,\alpha,\nu} = \int_{\Delta_{m,\alpha,\nu}} g(\mathbf{t}) \mathrm{d} \mathbf{t} &\leq \sum_{\xx \in D_{m,\alpha,\nu}} g(\xx) + (m-1+\alpha) \Sigma_{m,\alpha,\nu} \\
&= \Phi_{m,\alpha,\nu} + O_{m,\alpha}(\nu^m),
\end{align*}
on the other hand,
\[
\Phi_{m,\alpha,\nu} \leq C_{m,\alpha,\nu+m-1+\alpha},
\]
so the estimate \eqref{eq:Phi} follows from \eqref{eq:C}.
\end{proof}

We may now proceed with the proof of Theorem \ref{thm:approximative_14}, along the lines of \cite{Heath-Brown09, Marmon11}. Clearly, it suffices to consider the subset $S'$ of points $\xx \in S_\gamma(F,B)$, where $B/2 < |x_n| = \max_i |x_i| \leq B$. For $\xx \in S'$, we have
\begin{equation*}
|F(t_1,\dotsc,t_{n-1},1)| \leq 2^k B^{\gamma-k} 
\end{equation*}
where $\mathbf{t} = (x_1/x_n,\dotsc,x_{n-1}/x_n)$. Let $M_0$ be as in Lemma \ref{lem:covering}. Then, as soon as
\begin{equation}
\label{eq:M_condition}
M \leq \frac{B^{k-\gamma}}{2^k M_0},
\end{equation}
Lemma \ref{lem:covering} implies that for each $\xx \in S'$, $\mathbf{t}$ belongs to one of $O(M^{n-2})$ boxes of equal sidelength $(M_0M)^{-1}$. We fix such a box 
\[
\cC = [a_1,a_1+(M_0 M)^{-1}]\times \dotsb \times [a_{n-1},a_{n-1}+(M_0 M)^{-1}] 
\]
and let $S'_\cC$ be the set of points $\xx \in S'$ for which $\ttt \in \cC$. Our aim will now be to show that a suitable choice of $M$ implies the existence of a form $A_\cC$ with integer coefficients such that $A_\cC(\xx) = 0$ for all $\xx \in S'_\cC$.

Let $D$ be a natural number and let $f_1,\dotsc,f_s$, where 
\begin{equation}
\label{eq:s}
s = s(D) = \binom{D+n-1}{n-1} = \frac{D^{n-1}}{(n-1)!}, 
\end{equation}
be the monomials in $n$ variables of degree $D$ (in any order). If $S'_\cC = \{\xx^{(i)}: 1\leq i \leq I\}$, it now suffices to prove that the matrix 
\[
X = (f_j(\xx^{(i)}))_{\substack{1\leq i \leq I\\
1 \leq j \leq s}} 
\]
has rank less than $s$, for $D$ large enough. Indeed, one may then find a non-zero vector $\bb \in \ZZ^s$, whose entries have size $O(B^{s^2D})$, such that $X \bb = \bf{0}$. Taking each entry of $\bb$ as coefficient for the corresponding monomial, we obtain a form $A_\cC$ with the desired property. If $J < s$, we trivially have $\mathrm{rank}(X) < s$. Otherwise, we choose a subset of $S'_\cC$ of cardinality $s$ --- without loss of generality we may take $\{\xx^{(1)},\dotsc,\xx^{(s)}\}$ --- and prove that the corresponding $s\times s$-subdeterminant
\[
\Delta_1 = \det(f_j(\xx^{(i)})_{1\leq i,j \leq s}
\]
vanishes. Since the value of $\Delta_1$ is an integer, it suffices to prove that $|\Delta_1| < 1$.

First, note that
\begin{equation}
\label{eq:Delta1}
\Delta_1 = \prod_{j=1}^s (x^{(i)}_n)^D \Delta_2 \ll B^{sD}|\Delta_2|,
\end{equation}
where $\Delta_2 =  \det \big( f_j(t^{(i)}_1,\dotsc,t^{(i)}_{n-1},1) \big)$. By Lemma \ref{lem:covering} we may assume, without loss of generality, that $\left\vert \partial F/\partial x_{n-1}(\aa,1) \right\vert \gg 1$, where $\aa = (a_1,\dotsc,a_{n-1})$. For $\ttt \in \cC$, let $\uu = (u_1,\dotsc,u_{n-1})$ and $w$ be as defined in Lemma \ref{lem:implicit}, and let $\xi = F(\ttt,1) = w+ F(\aa,1)$. Observe that $F(\aa,1) \ll_F M^{-1}$ by the Mean value theorem, and $\xi \ll M^{-1}$ by our assumption \eqref{eq:M_condition}, whence also $w \ll_F M^{-1}$. 

For each monomial $f_i$, write 
\(
f_i(\ttt,1) = G_i(\uu),
\)
where $G_i=G_{i,\cC}$ is a polynomial of degree $D$. If we write $\uu' = (u_1,\dotsc,u_{n-2})$ for short, the conclusion from Lemma \ref{lem:implicit} for any given $m \in \NN$ is now that
\begin{align*}
f_i(\ttt,1) &= G_i\big(\uu',\Phi_m(\uu',w) + u_{n-1} \Psi_m(\uu,w)\big)\\
&= g_i(\uu',\xi) + O_m(M^{-(m+1)}),
\end{align*}
where the $g_i$'s are polynomials of degree $O_m(1)$, depending on $\cC$, but with coefficients bounded in terms of $m$ (and $F$). We conclude that
\begin{equation}
\label{eq:Delta2}
\Delta_2 = \Delta_3 + O_m(M^{-(m+1)}),
\end{equation}
where $\Delta_3 =  \det \big( g_j\big(u^{(i)}_1,\dotsc,u^{(i)}_{n-2},\xi^{(i)}\big) \big)$.

We shall now estimate the determinant $\Delta_3$ using Lemma 3 in \cite{Heath-Brown09}. Thus, let $m_1,\dotsc,m_s$ be monomials in $n-1$ variables, enumerated in such a way that $\Vert m_1 \Vert \geq \Vert m_2 \Vert \geq \dotsb \Vert m_s \Vert$, where
\[
\Vert m \Vert := m\big((M_0 M)^{-1},\dotsc,(M_0 M)^{-1},2^k B^{\gamma-k}\big).
\]
Then, according to \cite[Lemma 3]{Heath-Brown09}, we have
\[
\Delta_3 \ll_{D,k} \prod_{i=1}^s \Vert m_i \Vert \ll_{D,k} M^{-f},
\]
where
\[
f = \sum (e_1+\dotsb+e_{n-2}+\alpha e_{n-1}),
\]
the sum being taken over all $(n-1)$-tuples of non-negative integers $(e_1,\dotsc,e_{n-1})$ such that $x_1^{e_1}\dotsb x_{n-1}^{e_{n-1}}$ occurs among the monomials $m_i$, $1\leq i \leq s$, and with $\alpha$ defined by the relation
\begin{equation}
\label{eq:alpha}
(M_0 M)^\alpha = 2^{-k} B^{k-\gamma}.
\end{equation}
Under the assumption \eqref{eq:M_condition}, we have $\alpha \geq 1$. If $\nu \geq 1$ is defined by $\Vert m_s \Vert = (M_0 M)^\nu$, then we have 
\begin{gather*}
\Sigma_{n-1,\alpha,\nu-1} \leq s \leq \Sigma_{n-1,\alpha,\nu} \quad \text{and} \quad
\Phi_{n-1,\alpha,\nu-1} \leq f \leq \Phi_{n-1,\alpha,\nu},
\end{gather*}
in the notation introduced above. From Proposition \ref{prop:volumes} and \eqref{eq:s}, we infer that $\nu = \alpha^{1/(n-1)} D + O_\alpha(1)$ and
\begin{equation*}
f = \frac{\alpha^{1/(n-1)}D^n}{n(n-2)!} + O_\alpha(D^{n-1}).
\end{equation*}

By \eqref{eq:Delta1}, \eqref{eq:Delta2} and \eqref{eq:alpha}, choosing $m=f \ll_{\alpha,D} 1$, we have
\begin{equation}
\label{eq:Delta1_2}
\Delta_1 \ll_{D,\alpha} B^{s D} M^{-f} \ll_{D,\alpha} B^{\beta(D)},
\end{equation}
where
\begin{align*}
\beta(D) &= s D - \frac{(k-\gamma)f}{\alpha}  = \frac{c D^n}{(n-2)!}  + O_\alpha(D^{n-1}),
\end{align*}
with
\begin{align*} 
c &= c(\alpha) = \frac{1}{n-1}-\frac{k-\gamma}{n \alpha^{(n-2)/(n-1)}}.
\end{align*}
If $\alpha$ is small enough to render $c$ negative, then we will have $\beta(D) < 0$ for $D$ large enough, implying that $|\Delta_1| < 1$ for $B$ large enough, as desired. 

We preliminarily set
\[
\tilde\alpha = (1-\lambda) \left(\frac{(n-1)(k-\gamma)}{n}\right)^{(n-1)/(n-2)},
\]
where $\lambda \in (0,1)$ is a parameter to be chosen properly. Since, by assumption, $\gamma \in [0,k-n/(n-1))$, we have $\tilde\alpha > 1$ for $\lambda$ small enough. As $M$ was required to be an integer in our construction, we then define 
\[
M = \left\lceil \frac{B^{(k-\gamma)/\tilde\alpha}}{2^{k/\tilde\alpha}M_0} \right\rceil. 
\]
If we assume, as we may, that $B > 2^k M_0$, say, then it follows that
\[
\tilde\alpha \leq \alpha \leq \tilde\alpha + O((\log B)^{-1}).
\]
The condition \eqref{eq:M_condition} is then indeed satisfied, at least as soon as $B \gg 1$. Moreover, we have $c(\alpha) \leq c(\tilde\alpha) \leq c_0(\lambda)$, say, where $c_0(\lambda) < 0$. This in turn implies that $\beta(D) < -1$, say, for $D$ greater than some constant $D_0(\lambda)$, so that by \eqref{eq:Delta1_2} we indeed get $|\Delta_1| < 1$ for $B \gg_{F,\lambda} 1$. 

We have now achieved our goal, establishing for each box $\cC$ the existence of a form $A_\cC$ with the desired properties. To finish the proof of Theorem \ref{thm:approximative_14}, we need only bound that the number $\cN$ of boxes needed. We recall that $\cN \ll_F M^{n-2}$, where we now have $M \ll_F B^{(k-\gamma)/\tilde\alpha}$. Thus we get
\begin{align*}
\frac{\log \cN}{\log B} &\leq \frac{(n-2)(k-\gamma)}{\tilde\alpha} + O((\log B)^{-1})\\
&= \theta(1 + O(\lambda)) + O((\log B)^{-1}) \leq \theta + \epsi + O((\log B)^{-1})
\end{align*}
as soon as $\lambda \ll_\epsi 1$, as desired.


\section{The number of solutions to binary homogeneous inequalities}
\label{sec:binary}
Given a binary form $F \in \ZZ[x_1,x_2]$ of degree $d \geq 2$ and a positive integer $P$, we are interested in the set $S_F(P)$ of integer solutions $\xx \in \ZZ^2$ to the inequality
\begin{equation}
\label{eq:thue_ineq}
1 \leq |F(\xx)| \leq P.
\end{equation}
If $d \geq 3$ and $F$ is irreducible over $\QQ$, then this is called a \emph{Thue inequa\-li\-ty}. 
Let $N_F(P)= \#S_F(P)$. Heuristically, one expects that $N_F(P) \sim P^{2/d} A_F$, where $A_F$ is the area of the region in $\RR^2$ defined by $|F(\xx)| \leq 1$. Schmidt proved the upper bound $N_F(P) \ll dP^{2/d}(1 + \log P^{1/d})$, where the implied constant is absolute. Thunder \cite{Thunder01} sharpened this result, proving that $N_F(P) \ll_d P^{2/d}$ under the weaker hypotheses that $A_F$ be finite and $F$ do not vanish at any non-trivial rational point.  

When these hypotheses are not satisfied, for example if $A_F$ is infinite, one may still address the 
problem of estimating the number of solutions to \eqref{eq:thue_ineq} of \emph{bounded height}. For any real number $B\geq 1$, let $S_F(B,P)$ be the set of primitive integer solutions 
$\xx$
to \eqref{eq:thue_ineq} satisfying $\Vert \xx \Vert_{\infty} \leq B$, and let $N_F(B,P) = \#S_F(B,P)$. Our aim in this section is to find a good upper bound for the quantity $N_F(B,P)$ as $B \to \infty$.

We introduce some notation, following \cite{Thunder01}. For a linear form $L(\xx) = a_1 x_1 + a_2 x_2$ we use the corresponding bold letter $\bf L$ to denote its coefficient vector $(a_1,a_2)$. This will be viewed as a row vector, and consequently its transpose, denoted $\bf L^{\tr}$, is a column vector. We also define $\overline{\bf L} = (\overline{a_1},\overline{a_2})$, where $\overline{a}$ denotes the complex conjugate of a complex number $a$. Given a factorization $F(\xx) = \prod_{i=1}^d L_i(\xx)$, where $L_i \in \CC[\xx]$ are linear forms, we define the height of $F$ to be
\[
\cal H(F) := \prod_{i=1}^d \Vert \bf L_i \Vert.
\]
Here and throughout this section, $\Vert \cdot \Vert$ denotes the $L^2$-norm. As is easily seen, the height is independent of the factorization. Moreover, it is shown in \cite[Lemma 1]{Thunder01} that $\cal H(F) \geq 1$ for any form $F \in \ZZ[\xx]$. 

Let $\GL_2(\ZZ)$ be the set of $2\times 2$-matrices with integer entries and determinant $\pm 1$. Two forms $F,G \in \ZZ[\xx]$ are said to be equivalent if $G = F \circ T$ for some $T \in \GL_2(\ZZ)$. The quantities $V_F$ and $N_F(P)$ are clearly preserved under equivalence, whereas $\cal H(F)$ is not.

\begin{def*}
Given a factorization as above, let $a(F)$ be the maximum cardinality of a subset $I \subset \{1,\dotsc,d\}$ such that the vectors $\bf L_i$ for $i \in I$ are pairwise proportional. In other words, $a(F)$ is the highest multiplicity of any complex zero of $F$. This is an easy special case of the quantity $a(F)$ introduced in \cite{Thunder01}.
\end{def*}

We are now ready to state the main result of this section. Given a sublattice $\Lambda \subset \ZZ^2$ of dimension two, we define 
\[
S_F(\Lambda,B,P) = S_F(B,P) \cap \Lambda \quad \text{and} \quad N_F(\Lambda,B,P) = \#S_F(\Lambda,B,P).
\]

\begin{prop}
\label{prop:binary}
Let $F \in \ZZ[\xx]$ be a form of degree $d$, and suppose that $a(F) \leq d/2$. Then the estimate
\[
N_F(\Lambda,B,P) \ll_d \left(\frac{P^{2/d}}{\det(\Lambda)}+1\right)\log B
\]
holds for any two-dimensional sublattice $\Lambda \subset \ZZ^2$. In particular,
\[
N_F(B,P) \ll_d  P^{2/d} \log B.
\]
\end{prop}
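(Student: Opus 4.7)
My approach follows the classical strategy for Thue inequalities, adapted to the height restriction $\|\xx\|_\infty \leq B$ and to the sublattice $\Lambda$. Factor $F = \prod_{i=1}^d L_i$ over $\CC$, so that $\cal H(F) = \prod_i \Vert \bf L_i \Vert \geq 1$ by \cite[Lemma 1]{Thunder01}. For each solution $\xx$, the AM-GM inequality applied to $|F(\xx)| = \prod_i |L_i(\xx)| \leq P$ produces an index $i = i(\xx)$ minimising $|L_i(\xx)|/\Vert \bf L_i \Vert$ and satisfying $|L_i(\xx)|/\Vert \bf L_i \Vert \leq (P/\cal H(F))^{1/d} \leq P^{1/d}$. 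Partition the solution set according to $i(\xx)$ (losing a factor of $d$), and invoke the hypothesis $a(F) \leq d/2$ to select, for each fixed $i$, a second factor $L_j$ whose coefficient vector is not proportional to $\bf L_i$; the pair $(L_i, L_j)$ then provides a linearly independent system of (possibly complex) coordinates on $\RR^2$.

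Next, dyadically decompose $|L_i(\xx)| \in [T/2,T]$, where $T$ runs between a small threshold $T_0$ and $T_{\max} \asymp P^{1/d} \Vert \bf L_i \Vert$. A gap principle, obtained from the Grassmann--Plücker identity
\[
\det(\bf L_i, \bf L_j)\, \det(\xx_1, \xx_2) = L_i(\xx_1) L_j(\xx_2) - L_i(\xx_2) L_j(\xx_1)
\]
combined with $|\det(\xx_1, \xx_2)| \geq \det(\Lambda)$ for linearly independent $\xx_1, \xx_2 \in \Lambda$, shows that below $T_0$ all primitive lattice-point solutions lie on a single line through the origin, contributing only $O(1)$ points; above $T_0$ the number of dyadic ranges is $O(\log B)$. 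In each such range, view the region $\{|L_i(\xx)| \sim T\} \cap \{|F(\xx)| \leq P\}$ as a tube along the line $L_i = 0$: its width is $\asymp T/\Vert \bf L_i \Vert$, while its longitudinal extent is controlled by combining $|F(\xx)| \leq P$ with the minimality of $i$ (which yields $|L_k(\xx)|/\Vert \bf L_k \Vert \geq T/\Vert \bf L_i \Vert$ for all $k \neq i$, so $\prod_{k \neq i}|L_k(\xx)| \leq P/T$), giving a longitudinal length $\ll (P \Vert \bf L_i \Vert/(T \cal H(F)))^{1/(d-1)}$. Each tube therefore has area $\asymp T^{(d-2)/(d-1)}(P/\cal H(F))^{1/(d-1)} \Vert \bf L_i \Vert^{-(d-2)/(d-1)}$, and a standard Minkowski-style lattice-point count in this convex tube gives $\ll_d \mathrm{area}/\det(\Lambda) + 1$ points per dyadic range. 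Geometric summation over dyadic $T \leq T_{\max}$ produces a total area $\ll_d P^{2/d}/\cal H(F)^{1/(d-1)} \leq P^{2/d}$, yielding the claimed bound $\ll_d (P^{2/d}/\det(\Lambda) + 1)\log B$.

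The main obstacle will be the careful bookkeeping of the $F$-dependent quantities $\Vert \bf L_i \Vert$, $|\det(\bf L_i, \bf L_j)|$, and $\cal H(F)$, which must cancel so that the implied constant depends only on $d$. The inputs that make this cancellation clean are the lower bound $\cal H(F) \geq 1$ from Thunder's lemma, together with the hypothesis $a(F) \leq d/2$, which ensures a non-pathological choice of the auxiliary factor $L_j$.
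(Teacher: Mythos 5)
Your skeleton --- for each solution select a pair of linearly independent linear factors whose (suitably normalised) product of values is small, cover the relevant region by $O(\log B)$ pieces, and count lattice points in each piece by an area-over-$\det(\Lambda)$ bound --- is the same as the paper's, which outsources these steps to Thunder's Lemmas 7$'$, 8 and 9. The genuine gap is at the step you defer to ``bookkeeping'': the assertion that $\mathcal{H}(F)\geq 1$ together with $a(F)\leq d/2$ makes all $F$-dependent quantities cancel, leaving a constant depending only on $d$. That is precisely the hard part of the proposition, and the two inputs you cite do not control the quantity your tube geometry actually needs, namely the normalised determinant $|\det(\mathbf{L}_i^{\tr},\mathbf{L}_j^{\tr})|/(\Vert\mathbf{L}_i\Vert\,\Vert\mathbf{L}_j\Vert)$. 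It enters twice: first, your claimed longitudinal length $(P\Vert\mathbf{L}_i\Vert/(T\mathcal{H}(F)))^{1/(d-1)}$ does not follow from ``$|L_k(\xx)|/\Vert\mathbf{L}_k\Vert\geq T/\Vert\mathbf{L}_i\Vert$ and $\prod_{k\neq i}|L_k(\xx)|\leq P/T$'' --- to bound how far the tube extends you must lower-bound $|L_k(\xx)|$ by the longitudinal coordinate times the sine of the angle between the lines $L_i=0$ and $L_k=0$, which reintroduces these determinants (and fails entirely for the up to $d/2$ factors proportional to $L_i$, which do not grow along the tube, so $d-1$ should anyway be $d-a(F)$); second, the parallelogram $\{|L_i(\xx)|\leq T,\ |L_j(\xx)|\leq U\}$ has area $\asymp TU/|\det(\mathbf{L}_i^{\tr},\mathbf{L}_j^{\tr})|$. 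And $\mathcal{H}(F)\geq 1$ gives no lower bound on the normalised determinant: the integer form $M^2X_1^2-2M^2X_1X_2+(M^2-1)X_2^2=(MX_1-(M+1)X_2)(MX_1-(M-1)X_2)$ has two non-proportional factors with normalised determinant $\asymp 1/M$, and multiplying by harmless further factors gives degree-$d$ examples with $a(F)\leq d/2$. The same uncontrolled ratio $\Vert\mathbf{L}_i\Vert\Vert\mathbf{L}_j\Vert/|\det(\mathbf{L}_i^{\tr},\mathbf{L}_j^{\tr})|$ appears in the ratio $T_{\max}/T_0$ between your AM--GM threshold and your gap-principle threshold, so even ``$O(\log B)$ dyadic ranges'' with a $d$-only constant is unjustified when the coefficients of $F$ are large compared with $B$ --- which is exactly the situation in \S\ref{sec:proof}, where the proposition is applied to forms $G$ whose coefficients grow polynomially in $B$.

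This uniformity in $F$ is what the paper's \S\ref{sec:binary} is built to supply: it passes to the $\GL_2(\RR)$-invariant height $\m(F)$, proves the uniform lower bound $\m(F)\geq 2^{-2d}$ in Proposition \ref{prop:m(F)} (Minkowski's second theorem plus a case distinction according to whether the transformed form vanishes at the relevant lattice vector --- this is where $a(F)\leq d/2$ does its real work), and then combines Lemma \ref{lem:Thunder05_3} with the rescaling trick of Lemma \ref{lem:Thunder01_6} (replace $L_i$ by $|F(\xx)|^{1/d}L_i/|L_i(\xx)|$) to obtain, for every solution $\xx$, a linearly independent pair with $|L_{i_1}(\xx)|\,|L_{i_2}(\xx)|/|\det(\mathbf{L}_{i_1}^{\tr},\mathbf{L}_{i_2}^{\tr})|\ll_d P^{2/d}$, after which Thunder's covering and lattice-point lemmas finish the count. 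Some substitute for this step is indispensable in your approach; as written, your argument can only yield the estimate with an implied constant depending on $F$, which is weaker than the statement and useless for the application in \S\ref{sec:proof}.
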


Following \cite{Thunder05}, we also introduce the quantity 
\[
\m(F) = \inf \{\cal H(F \circ T)\}, 
\]
where the infimum is taken over matrices $T \in \GL_2(\RR)$ with $\det(T) = \pm 1$. Unlike the height $\cal H(F)$, $\m(F)$ is indeed preserved under equivalence. If $F \in \ZZ[\xx]$ does not vanish at any non-trivial rational point, then one has $\m(F) \geq 2^{-5d/4}$, by \cite[Theorem 1]{Thunder05}. We shall derive a uniform lower bound that is valid in a slightly more general setting.

\begin{prop}
\label{prop:m(F)}
Let $F \in \ZZ[\xx]$ be a form of degree $d$ and suppose that $a(F) \leq d/2$. Then we have $\m(F) \geq 2^{-2d}$. 
\end{prop}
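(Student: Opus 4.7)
The plan is to combine a combinatorial pairing argument for the linear factors of $F$, enabled by the hypothesis $a(F)\le d/2$, with an arithmetic lower bound coming from the integrality $F\in\ZZ[X,Y]$.

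First I would record the fundamental pairing inequality: for any two linear forms $L, M$ with coefficient vectors $\bf L, \bf M$ and any $T\in\GL_2(\RR)$ with $|\det T|=1$, the identity $(\bf L T)\times(\bf M T)=(\det T)(\bf L \times \bf M)$ combined with $|\bf u \times \bf v|\le\|\bf u\|\|\bf v\|$ yields
\[
\|\bf L T\|\,\|\bf M T\|\;\ge\;|\bf L \times \bf M|,
\]
which is strictly positive precisely when $L$ and $M$ are non-proportional. Factoring $F=\prod_{i=1}^{d}L_{i}$ over $\CC$, I would then pair the $L_{i}$'s into cross-class pairs. To treat both parities of $d$ uniformly, I would work with the doubled multiset $\{L_{1},L_{1},\ldots,L_{d},L_{d}\}$ of size $2d$, whose largest proportionality class has size $2a(F)\le d$. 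By Hall's marriage theorem applied to the associated complete multipartite graph on $2d$ vertices, a perfect matching into $d$ cross-class pairs always exists, and applying the pairing inequality pair by pair gives
\[
\cal H(F\circ T)^{2}\;\ge\;\prod_{(L,M)\text{ matched}}|\bf L\times \bf M|.
\]

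The heart of the argument is then to show that the product on the right is at least $2^{-4d}$. For this I would use integrality: factor $F=c\prod_{k}p_{k}^{m_{k}}$ into primitive irreducible forms $p_{k}\in\ZZ[X,Y]$, so that $a(F)\le d/2$ becomes $m_{k}\le d/2$. Each $p_{k}$ splits over $\overline{\QQ}$ into Galois-conjugate linear factors, which I would normalize as $\ell_{k}^{\sigma}=\beta_{k}^{\sigma}X-\alpha_{k}^{\sigma}Y$ with $\alpha_{k}^{\sigma},\beta_{k}^{\sigma}\in\overline{\ZZ}$ (representing the projective roots of $p_{k}$ by algebraic integer pairs). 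The cross-products $\bf\ell_{k}^{\sigma}\times\bf\ell_{k'}^{\sigma'}$ of non-proportional factors are then nonzero algebraic integers, and the product $P=\prod(\bf L\times\bf M)$ over the matched pairs is a nonzero algebraic integer with $|\mathrm{Nm}(P)|\ge 1$. Since each Galois conjugate $\sigma P$ is the corresponding product for the (still cross-class) matching obtained by acting with $\sigma$, one has $\max_{\sigma}|\sigma P|\ge|\mathrm{Nm}(P)|^{1/[K:\QQ]}\ge 1$; hence there exists a matching whose cross-product product has absolute value at least $1$. The factor $2^{-4d}$ then arises as an estimate for the scalar constants needed to pass between the original factorization $F=\prod L_{i}$ and the normalized factorization $F=c'\prod(\ell_{k}^{\sigma})^{m_{k}}$.

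The principal obstacle lies in this final bookkeeping step: the constants relating the two factorizations depend on the leading coefficients of the $p_{k}$'s and on the extent to which the algebraic integers $\alpha_{k}^{\sigma},\beta_{k}^{\sigma}$ can be chosen genuinely coprime in $\overline{\ZZ}$ (an issue of ideal theory that is nontrivial when $\overline{\ZZ}$ fails to be a PID). Bounding these constants uniformly by $2^{O(d)}$, so as to obtain the explicit $2^{-2d}$ in the statement rather than some weaker $2^{-Cd}$, requires a careful Mahler-measure-style computation and is the delicate part of the argument.
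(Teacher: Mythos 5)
Your reduction $\cH(F\circ T)^2=\prod_{\text{pairs}}\Vert \mathbf{L}T\Vert\,\Vert \mathbf{M}T\Vert\ \ge\ \prod_{\text{pairs}}|\det(\mathbf{L}^{\tr},\mathbf{M}^{\tr})|$ over a cross-class perfect matching of the doubled factor multiset is sound (the matching exists since the largest class has size $2a(F)\le d$, and the cross products are unimodular invariants), and this is a genuinely different route from the paper, which never factors $F$: there one takes the lattice $T^{-1}\ZZ^2$, uses its successive minima and Minkowski's second theorem to build $S\in\GL_2(\ZZ)$ with $\cH(F\circ S)\le 2^d\lambda_2^d\,\cH(F\circ T)$, invokes Thunder's bound $\cH\ge 1$ for integer forms, and then argues by cases according to whether $F$ vanishes at the relevant lattice point, the hypothesis $a(F)\le d/2$ entering only to control the multiplicity of that vanishing linear factor.

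However, the step you describe as delicate bookkeeping is in fact a genuine gap, and it is not merely a matter of sharpening $2^{-Cd}$ to $2^{-2d}$: as written the argument gives no bound of the form $2^{-Cd}$ at all. Your Galois--norm argument certifies only that \emph{some} cross-class matching has $\prod|\det(\boldsymbol{\ell}^{\tr},\boldsymbol{\ell}'^{\tr})|\ge 1$ when the linear factors are normalised to have algebraic integer coefficients. But the quantity controlling $\cH(F\circ T)^2$ is the invariant product computed from a factorisation whose product is $F$ itself, and the two differ by $|c'|^2$, where $F=c'\prod_i\ell_i$. Since forcing algebraic integer coefficients inflates the factors by the leading coefficients of the irreducible components, $|c'|$ is of the size of a reciprocal product of powers of those leading coefficients, which is \emph{not} bounded below in terms of $d$. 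Concretely, if $q=aX^2+bXY+cY^2$ is an irreducible factor with $|b^2-4ac|$ fixed and $a$ huge, the integrally normalised factors $\ell_{1,2}=aX-a\alpha_{1,2}Y$ have cross product $a|b^2-4ac|^{1/2}$, whereas the contribution of this pair to the height-relevant invariant product is only $|b^2-4ac|^{1/2}$; the discrepancy $a^2$ is exactly the factor $|c'|^{-2}$ absorbed by your normalisation, so the certificate ``nonzero algebraic integer has norm $\ge 1$'' falls short of what is needed by an unbounded amount. To salvage the skeleton you would have to show that some admissible matching captures full discriminants and resultants together with their correct leading-coefficient powers (so that $|\mathrm{disc}|,|\mathrm{Res}|\ge 1$ can be invoked pair-by-pair), and it is not clear such a matching always exists; the paper's geometry-of-numbers argument avoids this issue entirely.
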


\begin{proof}
For an arbitrary matrix $T \in \GL_2(\RR)$ with $\det(T) = \pm 1$, we define the lattice $\bf M = T^{-1} \ZZ^2$. Let $\lambda_1\leq \lambda_2$ be the successive minima of $\bf M$ with respect to the unit square $\{\xx \in \RR^2: \Vert \xx \Vert_\infty \leq 1\}$, and let $\{\uu_1,\uu_2\}$ be a basis for $\bf M$ such that $\Vert \uu_i \Vert_\infty = \lambda_i$ for $i=1,2$. (It is a well known fact that such a basis exists for lattices in dimension at most four.) By Minkowski's second theorem we have $\lambda_1 \lambda_2 \leq 1$, since $\det(\bf M) = 1$.

Let $S = (T\uu_1^\tr,T\uu_2^\tr) \in \GL_2(\ZZ)$. Then we have
\begin{align*}
\left\Vert \bf L_i S \right\Vert^2 &= \left\Vert \big((\bf L_i T) \uu_1^\tr,(\bf L_i T) \uu_2^\tr\big)\right\Vert^2 \leq 
(\Vert \uu_1 \Vert^2 + \Vert \uu_2 \Vert^2)\Vert \bf L_i T \Vert^2 \\
&\leq 4 \lambda_2^2 \Vert \bf L_i T \Vert^2
\end{align*}
for $i=1,\dotsc,d$, so that 
\[
\cH(F \circ S) = \prod_{i=1}^d  \Vert \bf L_i S \Vert \leq 2^d \lambda_2^d\,\cH(F\circ T).
\]
In particular, since $F\circ S \in \ZZ[\xx]$, we get
\begin{equation}
\label{eq:height_lower_bound}
1 \leq  2^{d} \lambda_2^d\,\cH(F\circ T).
\end{equation}

We shall now derive an upper bound for $\lambda_2$. Let $\xx_1^\tr,\xx_2^\tr$ be the columns of $T$, and let $\zz_1^\tr,\zz_2^\tr$ be the columns of $S$. 
Note that $\bf L_i T = (L_i(\xx_1), L_i(\xx_2))$. 
We shall now distinguish between two cases. Suppose first that $F(\zz_1) \neq 0$. Writing $\uu_1 = (u_{11},u_{12})$, we get
\begin{align*}
1 & \leq |F(\zz_1)| = \prod_{i=1}^d |L_i(\zz_1)| = \prod_{i=1}^d |u_{11} L_i(\xx_1) + u_{12} L_i(\xx_2)| \\
&= \prod_{i=1}^d |(\bf L_i T) \uu_1^\tr| \leq \prod_{i=1}^d \Vert \uu_1 \Vert \Vert \bf L_i T \Vert = (\sqrt 2 \lambda_1)^d \cH(F\circ T).  
\end{align*}
From this inequality it follows that
\[
\lambda_2 \leq \frac{1}{\lambda_1} \leq \sqrt{2}\ \cH(F\circ T)^{1/d}.
\]
Inserting this bound into \eqref{eq:height_lower_bound}, we get
\[
\cH(F\circ T) \geq 2^{-3d/4}. 
\]

Suppose instead that $F(\zz_1) = 0$. If $\zz_1 = (z_{11},z_{12})$, we may then write $F(\xx) = L(\xx)^a G(\xx)$, where $L(\xx) = z_{12} x_1 - z_{11} x_2$, and where $G \in \ZZ[\xx]$ satisfies $G(\zz_1) \neq 0$. By the same arguments as above we then have
\[
1 \leq |G(\zz_1)| \leq  (\sqrt 2 \lambda_1)^{d-a} \cH(G\circ T).
\]
Furthermore, writing $\xx_i = (x_{i1},x_{i2})$ for $i=1,2$, we have
\begin{align*}
\uu_1 &= \zz_1(T^{-1})^\tr = \pm (x_{22} z_{11} - x_{21} z_{12}, -x_{12} z_{11} + x_{11} z_{12}) \\
&= \pm (-L(\xx_2),L(\xx_1)),
\end{align*}
whence
\begin{align*}
\lambda_1 &= \Vert \uu_1 \Vert_\infty \leq \Vert \uu_1 \Vert = \Vert (-L(\xx_2),L(\xx_1)) \Vert 
= \Vert (L(\xx_1),L(\xx_2)) \Vert \\ 
&= \cH(L\circ T).
\end{align*}
Combining these two estimates, we get
\[
\cH(F\circ T) = \cH(L\circ T)^a \cH(G\circ T) \geq \lambda_1^a (\sqrt{2} \lambda_1)^{a-d} = 2^{(a-d)/2} \lambda_1^{2a-d}. 
\]
In case $a=d/2$, this immediately implies that $\cH(F\circ T) \geq 2^{-d/4}$, which is certainly sufficient. If $a < d/2$, we get 
\[
\lambda_2 \leq \frac{1}{\lambda_1} \leq 2^{(d-a)/(2(d-2a))} \cH(F\circ T)^{1/(d-2a)}. 
\]
As before we insert this bound into \eqref{eq:height_lower_bound}, and obtain
\[
1 \leq 2^{d(2d-3a)/(2(d-2a))} \cH(F\circ T)^{2(d-a)/(d-2a)}, 
\]
which in turn gives 
\[
\cH(F \circ T) \geq 2^{-d(2d-3a)/(d-a)}.
\]
For $1 \leq a < d/2$, we have $d(2d-3a)/(d-a) \leq 2d$, which proves the desired estimate.
\end{proof}

\begin{lemma}
\label{lem:Thunder05_3}
Let $F \in \ZZ[\xx]$ be a form of degree $d$ and suppose that $a(F) \leq d/2$. Choose a factorization $F(\xx) = \prod_{i=1}^d L_i(\xx)$, where $\bf L_i \in \RR^2$ for $i=1,\dotsc,r$, $\bf L_i \in \CC^2$ for $i=r+1,\dotsc,r+2s = d$ and $\bf L_{i+s} = \overline{\bf L_i}$ for $i = r+1,\dotsc,r+s$. Then we have
\[
\sum_{1\leq i_1,i_2 \leq d} \left|\det(\bf L_{i_1}^{\tr},\bf L_{i_2}^{\tr})\right|^2 \geq \frac{{d}^2}{2} \m(F)^{4/d}. 
\]
\end{lemma}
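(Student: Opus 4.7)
The plan is to identify the double sum with $2\det H$ for a suitable Gram matrix $H$, and then exploit the invariance of $\det H$ under volume-preserving changes of variables to reduce to an isotropic configuration. Writing $\bf L_i = (\ell_{i,1}, \ell_{i,2})$, the Cauchy--Binet formula gives
\[
\sum_{1 \leq i_1 < i_2 \leq d} \left|\det(\bf L_{i_1}^{\tr}, \bf L_{i_2}^{\tr})\right|^2 = \det H,
\]
where $H$ is the $2\times 2$ matrix with entries $H_{jk} = \sum_{i=1}^d \ell_{i,j}\,\overline{\ell_{i,k}}$. A short computation using the complex-conjugate pairing $\bf L_{i+s} = \overline{\bf L_i}$ shows that $H$ is real symmetric, and the hypothesis $a(F) \leq d/2$ prevents all $\bf L_i$ from being collinear, so $H$ is positive definite. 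Since the diagonal $i_1 = i_2$ contributes nothing, the full double sum in the lemma therefore equals $2 \det H$.

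I would next exploit the fact that under $F \mapsto F \circ T$ with $T \in \GL_2(\RR)$, each $\bf L_i$ is replaced by $\bf L_i T$ and $H$ accordingly becomes $T^{\tr} H T$, so $\det H$ is preserved whenever $|\det T| = 1$ (consistent with the visible invariance of the left-hand side). Using the spectral decomposition $H = U D U^{\tr}$, I would explicitly take $T = (\det H)^{1/4}\, U D^{-1/2}$, which satisfies $|\det T| = 1$ and $T^{\tr} H T = \sqrt{\det H}\cdot I_2$. After replacing $F$ by $F \circ T$, the Gram matrix becomes scalar, and consequently
\[
\sum_{i=1}^d \Vert \bf L_i \Vert^2 = \tr H = 2\sqrt{\det H}.
\]

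The proof would then conclude by applying AM--GM to $\Vert \bf L_1 \Vert^2, \dotsc, \Vert \bf L_d \Vert^2$ to obtain
\[
\cH(F)^{2} = \prod_{i=1}^d \Vert \bf L_i \Vert^2 \leq \left(\frac{1}{d} \sum_{i=1}^d \Vert \bf L_i \Vert^2\right)^d = \left(\frac{2\sqrt{\det H}}{d}\right)^d,
\]
combined with $\m(F) \leq \cH(F)$ in this normalized setting; rearranging yields $\det H \geq (d^2/4)\, \m(F)^{4/d}$ and hence the claimed bound $2\det H \geq (d^2/2)\,\m(F)^{4/d}$. The main points that will require some care are verifying that $H$ is genuinely real symmetric (where the conjugate pairing $\bf L_{i+s} = \overline{\bf L_i}$ is essential, so that the Hermitian matrix $H$ has real entries) and checking that the normalizing $T$ lies in $\GL_2(\RR)$ with determinant $\pm 1$; both steps are short but deserve verification.
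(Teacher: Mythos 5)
Your argument is correct, and it takes a different route from the paper: the paper does not reproduce a proof at all, but simply defers to the proof of Lemma 3 in Thunder's paper, remarking only that the hypothesis there (finiteness of $A_F$) can be replaced by the existence of two linearly independent vectors $\bf L_i$, which $a(F)\leq d/2$ guarantees. Your write-up instead gives a self-contained linear-algebra proof, and I checked the key steps: Cauchy--Binet does identify the unordered-pair sum with $\det H$ for $H=\overline{A^*A}$ (rows of $A$ being the $\bf L_i$), so the full ordered sum is $2\det H$; the conjugate pairing $\bf L_{i+s}=\overline{\bf L_i}$ makes $H$ real symmetric, and $a(F)<d$ (in particular $a(F)\leq d/2$) rules out all $\bf L_i$ being proportional, so $H$ is positive definite; the matrix $T=(\det H)^{1/4}\,U D^{-1/2}$ is real with $\det T=\pm 1$ and gives $T^{\tr}HT=\sqrt{\det H}\,I_2$, hence $\sum_i\Vert \bf L_i T\Vert^2=2\sqrt{\det H}$; and since $\cH(F\circ T)=\prod_i\Vert\bf L_i T\Vert$ (height being independent of the factorization) while $\m(F)\leq \cH(F\circ T)$ by the definition of the infimum, AM--GM yields $\m(F)^{2}\leq \bigl(2\sqrt{\det H}/d\bigr)^{d}$, which rearranges exactly to $2\det H\geq \tfrac{d^2}{2}\m(F)^{4/d}$. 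What your approach buys is transparency: it isolates precisely where the conjugate pairing enters (reality of $H$, so that the normalizing $T$ can be taken in $\GL_2(\RR)$) and shows that the hypothesis is only used through the weaker condition $a(F)<d$, which is exactly the point the paper's one-line remark about Thunder's hypothesis is making; the paper's citation, on the other hand, keeps the exposition short and leans on Thunder's original argument rather than redoing it.
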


\begin{proof}
This follows from the proof of \cite[Lemma 3]{Thunder05}. Indeed, our assumption about $a(F)$ certainly suffices to establish the existence of two linearly independent coefficient vectors $\bf L_i$, so we may replace the hypothesis in \cite{Thunder05} that $A_F$ be finite with this weaker hypothesis.
\end{proof}

\begin{lemma}
\label{lem:Thunder01_6}
Suppose that $a(F) \leq d/2$. Then, for each $\zz \in \RR^2$, there are $1\leq i_1,i_2 \leq d$ such that $\bf L_{i_1}$ and $\bf L_{i_2}$ are linearly independent and 
\[
\frac{\left|L_{i_1}(\zz)\right|\left|L_{i_2}(\zz)\right|}{\left|\det(\bf L_{i_1}^{\tr},\bf L_{i_2}^{\tr})\right|} \ll \frac{|F(\zz)|^{2/d}}{\m(F)^{2/d}}. 
\]
\end{lemma}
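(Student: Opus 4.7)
The plan is to identify a suitable pair $(i_1,i_2)$ of indices whose coefficient vectors $\bf L_{i_1},\bf L_{i_2}$ are linearly independent and for which both sides of the claimed inequality can be controlled. The trivial case $F(\xx)=\0$ can be dismissed at the outset: some $L_i(\xx)$ vanishes, so setting $i_1=i$ and choosing $i_2$ with $\bf L_{i_2}$ not proportional to $\bf L_{i_1}$ (which exists since $a(F)\leq d/2<d$) makes the left-hand side zero, and the inequality holds.

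Assume henceforth $F(\xx)\neq 0$. The first ingredient is to exhibit a pair $(i_1,i_2)$ of linearly independent forms with $|L_{i_1}(\xx)L_{i_2}(\xx)|\ll|F(\xx)|^{2/d}$. The plan is to partition the indices into their proportionality classes $C_1,\ldots,C_r$, each of cardinality at most $a(F)\leq d/2$. For each class $C_j$, pick a representative $i_j^\star\in C_j$ achieving $m_j:=\min_{i\in C_j}|L_i(\xx)|$, and relabel so that $m_1\leq m_2\leq\cdots\leq m_r$. Writing $\bf L_i=\lambda_i\bf L_{i_j^\star}$ for $i\in C_j$, the factorization
\[
|F(\xx)|=\prod_{j=1}^r\Bigl(\prod_{i\in C_j}|\lambda_i|\Bigr)\,m_j^{|C_j|}
\]
combined with $|C_1|\leq d/2$ should yield, by a short algebraic manipulation based on the inequality $m_1\leq m_2\leq\cdots\leq m_r$, that $m_1m_2\leq|F(\xx)|^{2/d}$; the pair $(i_1,i_2)=(i_1^\star,i_2^\star)$ then has the desired product bound and is automatically independent.

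The second, more delicate ingredient is to secure the matching lower bound $|\det(\bf L_{i_1}^{\tr},\bf L_{i_2}^{\tr})|\gg\m(F)^{2/d}$. The plan is to invoke Lemma~\ref{lem:Thunder05_3}, according to which
\[
\sum_{1\leq i,j\leq d}|\det(\bf L_i^{\tr},\bf L_j^{\tr})|^2\geq\frac{d^2}{2}\m(F)^{4/d},
\]
so the maximum over linearly independent pairs is at least a constant multiple of $\m(F)^{2/d}$. To transfer this from the maximizing pair to the pair $(i_1^\star,i_2^\star)$ produced above, I would use the Cramer-type expansion
\[
L_i(\xx)=\frac{\det(\bf L_i^{\tr},\bf L_{i_2}^{\tr})L_{i_1}(\xx)-\det(\bf L_i^{\tr},\bf L_{i_1}^{\tr})L_{i_2}(\xx)}{\det(\bf L_{i_1}^{\tr},\bf L_{i_2}^{\tr})},
\]
valid for every other index $i$, and multiply over $i$ to relate $|F(\xx)|=\prod_i|L_i(\xx)|$ to a product involving the determinants $|\det(\bf L_i^{\tr},\bf L_{i_j}^{\tr})|$ and the factor $|L_{i_1}(\xx)L_{i_2}(\xx)|/|\det(\bf L_{i_1}^{\tr},\bf L_{i_2}^{\tr})|$ to some power. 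Comparing the resulting estimate with the averaging bound above and with the previously established $m_1m_2\leq|F(\xx)|^{2/d}$ should give the claim.

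The main obstacle is precisely this reconciliation: the pair for which $|L_{i_1}(\xx)L_{i_2}(\xx)|$ is small and the pair for which $|\det(\bf L_{i_1}^{\tr},\bf L_{i_2}^{\tr})|$ is large need not coincide, and combining them forces one to argue either via an optimization over all independent pairs, or by exploiting the freedom to swap $i_j^\star$ for another index in the same class. I expect this step to follow the same template as \cite[Lemma 6]{Thunder01}, whose original proof assumed $A_F$ finite; the weaker assumption $a(F)\leq d/2$ is used in exactly the same way (to guarantee the existence of linearly independent coefficient vectors via pigeonhole), so the remainder of Thunder's argument should transfer with only notational changes.
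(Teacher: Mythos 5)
There is a genuine gap, and it is exactly the point you flag as ``the main obstacle'': everything in your first ingredient (the pigeonhole over proportionality classes giving a pair with $|L_{i_1}(\xx)L_{i_2}(\xx)|\leq|F(\xx)|^{2/d}$) is correct but does not suffice, because the quantity in the lemma is the \emph{ratio} $|L_{i_1}(\xx)||L_{i_2}(\xx)|/|\det(\mathbf{L}_{i_1}^{\tr},\mathbf{L}_{i_2}^{\tr})|$, which is invariant under rescaling each $\mathbf{L}_i$ separately; a pair whose product of values is small may have nearly proportional coefficient vectors, and the pair whose determinant is large by Lemma \ref{lem:Thunder05_3} need not be the same pair. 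Your proposed reconciliation (the Cramer expansion, or ``the remainder of Thunder's argument should transfer'') is never carried out, and since this reconciliation is the entire content of the lemma, the proof as written is incomplete.

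The paper closes this gap with a short normalisation trick that makes the two extremal pairs coincide automatically: assuming $F(\xx)\neq 0$, replace each factor by
\[
L_i'(\mathbf{X}) := \frac{|F(\xx)|^{1/d}}{|L_i(\xx)|}\,L_i(\mathbf{X}),
\]
which yields another admissible factorization of $F$ (real forms stay real, conjugate pairs stay paired, and $a(F)$ is unchanged), so Lemma \ref{lem:Thunder05_3} applies to the primed forms. Since the sum there has at most $d(d-1)$ non-zero terms, some pair satisfies $|\det((\mathbf{L}_{i_1}')^{\tr},(\mathbf{L}_{i_2}')^{\tr})|^2\geq \frac{d}{2(d-1)}\m(F)^{4/d}$, and the identity
\[
\det\bigl((\mathbf{L}_{i_1}')^{\tr},(\mathbf{L}_{i_2}')^{\tr}\bigr)=\frac{|F(\xx)|^{2/d}\,\det(\mathbf{L}_{i_1}^{\tr},\mathbf{L}_{i_2}^{\tr})}{|L_{i_1}(\xx)|\,|L_{i_2}(\xx)|}
\]
turns this lower bound directly into the asserted inequality for that same pair, with linear independence coming for free from the non-vanishing determinant. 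If you want to salvage your two-step outline you would have to prove the missing comparison yourself (essentially redoing Thunder's argument under the hypothesis $a(F)\leq d/2$ rather than $A_F$ finite); the rescaling above is the cleaner and shorter route, and it makes your first ingredient unnecessary.
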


\begin{proof}
If $F(\zz) = 0$, there is nothing to prove, so let us assume other\-wise. As both sides in the sought inequality are independent of the particular factorization used, let us initially choose a factorization  $F(\xx) = \prod_{i=1}^d L_i(\xx)$ as in the statement of Lemma \ref{lem:Thunder05_3}. Putting
\[
L_i'(\xx) := \frac{|F(\zz)|^{1/d}}{|L_i(\zz)|}L_i(\xx)
\]
for $i=1,\dotsc,d$, we get an alternative factorization $F(\xx) = \prod_{i=1}^d L_i'(\xx)$ that still satisfies the hypotheses of Lemma \ref{lem:Thunder05_3}. It follows from the lemma that
\[
\sum_{1\leq i_1,i_2 \leq d} \left|\det\left(\left(\bf L_{i_1}'\right)^{\tr},\left(\bf L_{i_2}'\right)^{\tr}\right)\right|^2 \geq \frac{{d}^2}{2} \m(F)^{4/d}. 
\]
The sum on the left hand side has at most $d(d-1)$ non-zero terms, so for some pair $(i_1,i_2)$ we must have
\[
\left|\det\left(\left(\bf L_{i_1}'\right)^{\tr},\left(\bf L_{i_2}'\right)^{\tr}\right)\right|^2 \geq \frac{d}{2(d-1)} \m(F)^{4/d}. 
\]
On the other hand,
\[
\det\left(\left(\bf L_{i_1}'\right)^{\tr},\left(\bf L_{i_2}'\right)^{\tr}\right) = \frac{|F(\zz)|^{2/d}\det\left(\left(\bf L_{i_1}\right)^{\tr},\left(\bf L_{i_2}\right)^{\tr}\right)}{\left|L_{i_1}(\zz)\right|\left|L_{i_2}(\zz)\right|}, 
\]
whence
\[
\frac{\left|L_{i_1}(\zz)\right|\left|L_{i_2}(\zz)\right|}{\det\left(\left(\bf L_{i_1}\right)^{\tr},\left(\bf L_{i_2}\right)^{\tr}\right)}
\leq \left(\frac{2(d-1)}{d}\right)^{1/2} \frac{|F(\zz)|^{2/d}}{\m(F)^{2/d}}.
\]

\end{proof}

\begin{proof}[Proof of Proposition \ref{prop:binary}]
By Lemma \ref{lem:Thunder01_6} and Proposition \ref{prop:m(F)}, each $\xx \in S_F(P)$ satisfies an inequality
\begin{equation}
\label{eq:two_linear_forms}
\frac{\left|L_{i_1}(\xx)\right|\left|L_{i_2}(\xx)\right|}{\left|\det(\bf L_{i_1}^{\tr},\bf L_{i_2}^{\tr})\right|} \ll \frac{P^{2/d}}{\m(F)^{2/d}} \ll P^{2/d}. 
\end{equation}
There are at most $d(d-1)/2$ such inequalities to consider, and by \cite[Lemma 7$'$]{Thunder01}, the solutions to \eqref{eq:two_linear_forms} with $\Vert \xx \Vert \leq B$ lie in the union of $O(\log B)$ convex sets $\cC$ of the type
\[
\{\xx \in \RR^2: |K_1(\xx)| \leq B_1, |K_2(\xx)| \leq B_2\}, 
\]
where $K_i \in \CC[\xx]$ are linear forms satisfying $|\det(\bf K_1^{\tr}, \bf K_2^\tr)| = 1$ and $\Vert K_i \Vert = 1$, and where $B_1 B_2 \ll P^{2/d}$. Lemmas 8 and 9 in \cite{Thunder01} give estimates for the number of lattice points in such convex sets. Indeed, in case $\cC$ contains two linearly independent vectors in $\Lambda$, it follows from \cite[Lemma 8]{Thunder01} and the proof of \cite[Lemma 9]{Thunder01} that 
\[
\#(\cC \cap \Lambda) \ll \frac{P^{2/d}}{\det(\Lambda)}. 
\]
In the opposite case, $\cC$ obviously contributes at most two primitive points to $S_F(\Lambda,B,P)$, so collecting the contributions from all the sets $\cC$ gives the asserted bound.
\end{proof}


\section{Rational points close to curves}
\label{sec:proof}

We restate the case $n=3$ of Theorem \ref{thm:approximative_14} in a slightly different form. 
\begin{cor}
\label{cor:determinant}
There is a positive integer $D$, depending only on $\gamma$ and $\epsi$, a positive integer $M$ satisfying
\begin{equation}
\label{eq:M}
B^{9/4(k-\gamma)} \ll_{F,\gamma,\epsi} M \ll_{F,\gamma,\epsi} B^{9/4(k-\gamma)+\epsi},
\end{equation}
a collection of points $(r_1,s_1),\dotsc,(r_M,s_M) \in [-1,1]^2$, and a collection of $M' \ll M$ irreducible forms $A_1,\dotsc,A_{M'} \in \ZZ[x,y,z]$ of degree at most $D$, such that the following holds. For each primitive solution to \eqref{eq:ternary_ineq} with \eqref{eq:z_biggest}, there is some $1 \leq i \leq M$ for which
\begin{equation}
\label{eq:auxiliary+patch}
0 \leq \frac{x}{z} - r_i \leq M^{-1}, \quad 0 \leq \frac{y}{z} - s_i \leq M^{-1} \quad \text{and} \quad A_i(x,y,z) = 0.
\end{equation}
Moreover, the coefficients of $A_i$ may be chosen as coprime integers of size bounded by $B^c$ for some constant $c$. The points $(r_i,s_i)$ can be chosen as 
\begin{equation}
\label{eq:rational_midpoints}
(r_i,s_i) = \left(\frac{v_i}{M},\frac{w_i}{M}\right)
\end{equation}
for suitable integers $v_i,w_i$.
\end{cor}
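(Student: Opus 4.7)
The plan is to obtain Corollary \ref{cor:determinant} as a direct specialisation of Theorem \ref{thm:approximative_14} to the case $n = 3$, with its output repackaged in the affine coordinates $(r,s) = (x/z, y/z)$ and with the covering boxes aligned to a rational grid. I would start by computing
\[
\theta(3, k, \gamma) \;=\; (3-2)\cdot(3/2)^{2} \cdot (k-\gamma)^{-1} \;=\; \frac{9}{4(k-\gamma)},
\]
which is precisely the exponent appearing in \eqref{eq:M}. Inspecting the construction in the proof of Theorem \ref{thm:approximative_14}, the integer $M$ produced there has the form $\lceil B^{(k-\gamma)/\tilde\alpha}/(2^{k/\tilde\alpha} M_0)\rceil$ with $\tilde\alpha = (1-\lambda)\bigl(2(k-\gamma)/3\bigr)^{2}$, which the above computation shows gives $M \asymp B^{9/(4(1-\lambda)(k-\gamma))}$ up to bounded factors. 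Choosing $\lambda = \lambda(\epsi)$ small enough then yields exactly the two-sided bound \eqref{eq:M}.

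The next step is essentially a cosmetic relabelling. I would replace $M$ by $M_0 M$ (this only adjusts the implied constants in \eqref{eq:M}), so that the boxes from Lemma \ref{lem:covering} have sidelength exactly $1/M$ and can be chosen to lie on the lattice $M^{-1}\ZZ^2$. Their lower-left corners then take the form $(v_i/M, w_i/M)$ with $v_i, w_i \in \ZZ \cap [-M, M)$, which is the rational parametrisation \eqref{eq:rational_midpoints}. For each primitive solution $(x, y, z)$ satisfying \eqref{eq:ternary_ineq} and \eqref{eq:z_biggest}, the point $(x/z, y/z) \in [-1,1]^2$ falls in a unique such box, and its corner $(r_i, s_i)$ then satisfies the first two inequalities in \eqref{eq:auxiliary+patch}. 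Lemma \ref{lem:covering} guarantees that only $O(M^{n-2}) = O(M)$ of these boxes contain a solution, which supplies the bound $M' \ll M$.

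Finally, Theorem \ref{thm:approximative_14} attaches to each occupied box $\cC$ an integer form $A_{\cC}$ of degree at most $D = D(k, \gamma, \epsi)$ with coefficients of size $O(B^c)$ that vanishes on every relevant $(x,y,z)$ in the box. To pass to irreducible forms, I would divide $A_{\cC}$ by its content and factor in $\ZZ[x,y,z]$ via Gauss's lemma, producing at most $D$ irreducible integer factors; a standard height inequality (e.g.\ Mignotte's bound) keeps their coefficients of size $O(B^c)$ after possibly enlarging $c$. Each relevant solution in $\cC$ must annihilate at least one such factor, so by issuing one triple $(r_i, s_i, A_i)$ per (box, irreducible factor)-pair I obtain at most $DM \ll M$ triples satisfying \eqref{eq:auxiliary+patch}. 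The only genuinely delicate point is the first step, namely tracking the exponent through the choice of $\tilde\alpha$ to see that $M$ truly matches \eqref{eq:M}; the grid-alignment and the reduction to irreducible factors are routine bookkeeping.
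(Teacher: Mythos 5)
Your proposal is correct and follows essentially the same route as the paper: specialise Theorem \ref{thm:approximative_14} to $n=3$ (giving $\theta = 9/(4(k-\gamma))$ and $M \asymp B^{9/(4(1-\lambda)(k-\gamma))}$ with $\lambda = \lambda(\epsi)$ small), align the boxes to the grid $M^{-1}\ZZ^2$, and pass to irreducible factors of the auxiliary forms with a height bound for factors (the paper cites Gelfond's lemma where you cite Mignotte; this is immaterial). The only difference is that you spell out the bookkeeping the paper dismisses as obvious.
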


Here we have retained the connection between each auxiliary form and its corresponding patch, which is obvious from the proof of Theorem \ref{thm:approximative_14}. Furthermore we now assume our auxiliary forms to be irreducible simply by considering the irreducible factors of the original forms, at the cost of multiplying $M$ by a constant. The bound on the coefficients of the $A_i$ is still valid by Gelfond's lemma \cite[Lemma 1.6.11]{Bombieri_Gubler}. The last statement of the corollary is obvious.

We shall now go into the details of translating between solutions to the inequality \eqref{eq:F_ineq} and rational points close to the affine curve defined by $g(x,y)=0$, where $g(x,y) = F(x,y,1)$. Let us first record a number of useful results. The first one is a sublevel set estimate for functions in one variable that appears in more general form in \cite[Prop. 2.1]{Carbery_Christ_Wright}.

\begin{lemma}
\label{lem:russian}
Let $I \subset \RR$ be an open interval and let $h \in C^{k}(I)$ for some $k \geq 1$. If $|h^{(k)}(x)| \geq C >0$ for all $x \in I$, then
\[
|\{x \in I : |h(x)| \geq \delta\}| \leq 2e\,((k+1)!)^{1/k}\left(\frac{\delta}{C}\right)^{1/k}.
\]
\end{lemma}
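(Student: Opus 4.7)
The statement as written concerns the superlevel set $\{|h|\ge\delta\}$, but such a bound cannot hold without a restriction on $|I|$: the function $h(x)=Cx^k/k!$ satisfies the hypothesis on all of $\RR$ yet has superlevel sets of infinite measure. The statement appears to be a typographical error for the classical Carbery--Christ--Wright sublevel set estimate
\[
|\{x\in I:|h(x)|\le\delta\}|\le 2e\,((k+1)!)^{1/k}(\delta/C)^{1/k},
\]
which matches the cited reference and is what the later application in \S\ref{sec:proof} will need. I sketch a plan for this corrected statement.

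The argument proceeds by induction on $k$. For the base case $k=1$, continuity of $h'$ together with $|h'|\ge C>0$ force $h'$ to have constant sign, so $h$ is strictly monotone on $I$ and hence injective; the sublevel set $\{|h|\le\delta\}$ is then the preimage $h^{-1}([-\delta,\delta])\cap I$, an interval of length at most $2\delta/C$ by the mean value theorem.

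For the inductive step $k\ge 2$, apply the hypothesis to $h'$, which satisfies $|(h')^{(k-1)}|=|h^{(k)}|\ge C$. For a threshold $\eta>0$ to be optimized, decompose
\[
\{|h|\le\delta\}\subseteq\{|h'|\le\eta\}\cup\bigl(\{|h|\le\delta\}\cap\{|h'|>\eta\}\bigr).
\]
The first set has measure controlled by the inductive hypothesis, giving a bound $\ll(\eta/C)^{1/(k-1)}$. For the second, iterated Rolle's theorem (using that $h^{(k)}$ never vanishes on $I$) shows that $h'$ has at most $k-1$ zeros in $I$, so $I$ splits into at most $k$ monotonicity intervals of $h$; on each such interval $\{|h|\le\delta\}$ has measure $\le 2\delta/\eta$ by the $k=1$ case, contributing $\le 2k\delta/\eta$ in total. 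Balancing the two contributions with $\eta\asymp\delta^{(k-1)/k}C^{1/k}$ produces a bound of the correct shape $(\delta/C)^{1/k}$.

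The main obstacle is tracking the explicit constant $2e((k+1)!)^{1/k}$: a direct application of the above recursion yields a constant satisfying $c_k\asymp k\cdot c_{k-1}^{(k-1)/k}$, which grows too quickly. Matching the sharp factorial dependence likely requires a more refined analysis, for example a dyadic decomposition of the range of $|h'|$ rather than a single threshold, or a weighted averaging argument as in the original proof of Carbery--Christ--Wright, combined with careful bookkeeping of the multiplicative constants accumulated at each step.
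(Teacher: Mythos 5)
You are right that the inequality in the statement is a typo: with $|h(x)|\ge\delta$ the bound is false (your example $h(x)=Cx^k/k!$ on a long interval already shows this), and every later application in \S\ref{sec:proof} (the bound $|I|\ll C^{-1/(d-2)}$ in the proof of Lemma \ref{lem:roth_case}, and the bounds $|J_\nu|\ll\delta^{1/\ell}$ for the osculating conics) uses the sublevel form $|h(x)|\le\delta$. Note, however, that the paper gives no proof at all: the lemma is quoted from \cite[Prop.~2.1]{Carbery_Christ_Wright}, so there is no argument in the paper to compare yours with. Your induction is the standard route to the correct exponent $(\delta/C)^{1/k}$, and the base case is fine. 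One small repair is needed in the inductive step: on a monotonicity interval $J$ of $h$ you cannot literally invoke the case $k=1$, since $|h'|>\eta$ is only known on the set being measured, not on all of $J$. Instead observe that $\{x\in J:|h(x)|\le\delta\}$ is a subinterval on which $\int|h'|\,dx\le 2\delta$ by monotonicity, so Chebyshev's inequality bounds the measure of its intersection with $\{|h'|>\eta\}$ by $2\delta/\eta$ --- the same bound you claim, by a slightly different argument.

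The genuine gap is the one you flag yourself: the sketch does not deliver the constant $2e((k+1)!)^{1/k}$; optimally balancing your two terms gives a constant of order $k^2$, whereas the stated one is of order $k$. For this paper the gap is harmless, since Lemma \ref{lem:russian} is only ever used with an implied constant depending on $k$ (or on $F$), so your weaker version would serve every application. If you do want the lemma exactly as stated, there is a cleaner one-step argument that avoids induction and any loss: let $E=\{x\in I:|h(x)|\le\delta\}$, assume $|E|>0$, and choose $x_0<\dotsb<x_k$ in $E$ at the measure-quantiles of $E$, so that $x_j-x_i\ge\frac{(j-i)}{k}|E|$ (up to an $\epsi$ removed at the end). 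The $k$-th divided difference satisfies $h[x_0,\dotsc,x_k]=h^{(k)}(\xi)/k!$ for some $\xi\in I$, while the explicit formula $h[x_0,\dotsc,x_k]=\sum_j h(x_j)/\prod_{i\ne j}(x_j-x_i)$ together with the spacing bound $\prod_{i\ne j}|x_j-x_i|\ge(|E|/k)^k\,j!\,(k-j)!$ gives $|h[x_0,\dotsc,x_k]|\le\delta\,(k/|E|)^k\,2^k/k!$. Comparing the two yields $|E|\le 2k(\delta/C)^{1/k}$, which implies the stated bound because $((k+1)!)^{1/k}\ge(k!)^{1/k}\ge k/e$, so $2k\le 2e((k+1)!)^{1/k}$.
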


The next few lemmas concern algebraic functions in one variable.
\begin{lemma}
\label{lem:subdivision1}
Let $g(x,y) \in \RR[x,y]$ be an absolutely irreducible polynomial of degree $k \geq 2$. There is a subdivision of the unit square $[-1,1]^2$ into $O_k(1)$ smaller squares $S$ such that the following holds:

For each square $S$, the intersection of the curve $g(x,y) = 0$ with $S$ is the graph $y = f(x)$ or $x = f(y)$ of a function $f \in C^\infty(I)$, where $I$ denotes the projection of $S$ onto the $x$- or $y$-axis, accordingly.
The function $f$ satisfies 
\begin{equation}
\label{eq:f'_bounded}
\max\{|f(x)|,|f'(x)|\} \leq 1 \qquad \text{for all } x \in I.
\end{equation}

If $g$ is non-singular, then for each $\ell \geq 2$ we also have that
\begin{equation}
\label{eq:f_ell_bounded}
|f^{(\ell)}(x)| \leq A_\ell \qquad \text{for all } x \in I,
\end{equation}
where $A_\ell$ is a constant depending only on $g$ and $\ell$.
\end{lemma}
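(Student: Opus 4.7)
The plan has three steps: bound the combinatorial complexity of $C = \{g=0\}$ uniformly in $k$, construct a compatible subdivision of $[-1,1]^2$ into $O_k(1)$ squares, and verify the derivative bounds.

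\textbf{Step 1 (Exceptional points).} I collect the finite set $E \subset C \cap [-1,1]^2$ consisting of those points at which one of the following holds: $\nabla g = \0$, $\partial g/\partial y = 0$, $\partial g/\partial x = 0$, or $(\partial g/\partial x)^2 = (\partial g/\partial y)^2$ (tangent of slope $\pm 1$). Each of these conditions intersects $\{g=0\}$ with a polynomial of degree at most $k-1$; for the last, I factor $(\partial g/\partial x)^2 - (\partial g/\partial y)^2$ as $(\partial g/\partial x - \partial g/\partial y)(\partial g/\partial x + \partial g/\partial y)$, a product of two polynomials of degree $k-1$. Absolute irreducibility of $g$ excludes a common component (a short check rules out $\partial g/\partial x \pm \partial g/\partial y \equiv 0$ for $k\geq 2$, since otherwise $g$ would be a polynomial in $x\pm y$ alone and hence reducible or linear), so Bezout yields $|E| = O_k(1)$, uniformly in the coefficients of $g$.

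\textbf{Step 2 (Recursive square subdivision).} I perform a recursive dyadic subdivision of $[-1,1]^2$: starting from $[-1,1]^2$ itself, I call a square $S$ "good" if either $C \cap S = \emptyset$ or $C \cap S$ is a single graph of the required type, and otherwise split $S$ into four equal sub-squares. A square $S$ is automatically good once its interior avoids $E$ and contains at most one connected arc of $C$. The first condition is achieved after $O_k(|E|) = O_k(1)$ splittings, and the second requires at most $O_k(1)$ further splittings since $C \cap [-1,1]^2$ has $O_k(1)$ connected components by Harnack's theorem (or the Milnor--Thom bound), both uniform in the coefficients of $g$. On a good square, the Step-1 exclusions guarantee that throughout $C \cap S$ one of $|\partial g/\partial x| \leq |\partial g/\partial y|$ or the reverse inequality holds; the implicit function theorem then provides the required smooth parameterization $y = f(x)$ or $x = f(y)$ with $|f'| \leq 1$ and $|f| \leq 1$, the latter simply because $S \subset [-1,1]^2$.

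\textbf{Step 3 (Higher-derivative bound).} For \eqref{eq:f_ell_bounded}, repeated implicit differentiation of the identity $g(x, f(x)) = 0$ expresses $f^{(\ell)}(x)$ as a polynomial in $f(x), \dotsc, f^{(\ell-1)}(x)$ and the partial derivatives of $g$ at $(x, f(x))$, divided by a positive power of $\partial g/\partial y$. When $g$ is non-singular, $\partial g/\partial y$ vanishes on $C$ only at points of $E$ (since at such a zero one would need $\partial g/\partial x \neq 0$, already incorporated in $E$), all of which by construction lie on the boundary of the subdivision. Compactness of $\overline{S \cap C}$ then provides a positive lower bound on $|\partial g/\partial y|$ there, yielding the required bound $|f^{(\ell)}| \leq A_\ell$ with $A_\ell$ depending on $g$ and $\ell$.

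The main obstacle is maintaining the count of squares uniformly $O_k(1)$ as $g$ ranges over all absolutely irreducible polynomials of degree $k$. This is secured by invoking only Bezout (applied to $g$ against polynomials of strictly smaller degree, never equal) and Harnack/Milnor--Thom, both of which are uniform in the polynomial coefficients; the factorization trick for $(\partial g/\partial x)^2 - (\partial g/\partial y)^2$ in Step 1 is the key device that preserves this uniformity.
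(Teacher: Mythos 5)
Your Step 1 and Step 3 follow the same basic idea as the paper (cut along the locus on the curve where $g_x=0$, $g_y=0$ or $g_x=\pm g_y$, count it by Bezout, then get the higher derivatives by implicit differentiation with a lower bound on $g_y$), but your Step 2 has a genuine gap: the recursive dyadic subdivision need not terminate, and in any case its size cannot be bounded by $O_k(1)$ uniformly in the coefficients. Concretely, a point $p$ of your exceptional set $E$ at which the tangent slope crosses $\pm 1$ transversally (the generic behaviour at a zero of $g_x^2-g_y^2$ on the curve) typically has non-dyadic coordinates, so at every level of the recursion $p$ lies in the \emph{interior} of some dyadic square $S$; on the arc through $p$ the quantity $|g_x/g_y|$ takes values both $>1$ and $<1$ inside $S$, so neither parameterization $y=f(x)$ nor $x=f(y)$ satisfies \eqref{eq:f'_bounded}, and $S$ is never ``good''. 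Thus the sufficient condition you use for termination (``interior avoids $E$'') is never reached by splitting into equal quarters. Separately, your count for the second condition is not uniform: two distinct arcs of $C$ (think of a very eccentric ellipse, whose upper and lower branches are both graphs over $x$) can be at distance $\varepsilon$ depending on the coefficients of $g$, and separating them into different dyadic squares costs about $\log(1/\varepsilon)$ levels, so Harnack/Milnor--Thom alone does not give $O_k(1)$ squares independent of the coefficients, which is what the lemma asserts. The paper avoids both problems by not subdividing dyadically at all: one draws the horizontal and vertical lines \emph{through} the $O_k(1)$ exceptional points, so that these points lie on cell boundaries by construction and the grid has $O_k(1)$ cells uniformly in the coefficients; this is the missing device in your argument.

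Two smaller remarks. In Step 3, ``the zeros of $g_y$ on $C$ lie on the boundary of the subdivision'' does not by itself give a positive lower bound for $|g_y|$ on the closure of $S\cap C$ (a boundary point of $S$ can perfectly well lie in that closure, where the infimum would then be $0$). What saves you is \eqref{eq:f'_bounded} itself: on an arc parameterized as $y=f(x)$ with $|f'|\le 1$ one has $|g_x|\le|g_y|$, so at any point of its closure $g_y=0$ would force $g_x=0$, contradicting non-singularity; the paper exploits exactly this, obtaining the clean uniform bound $|g_y(x,f(x))|\ge c_g:=\min\{\max(|g_x|,|g_y|)\}>0$ over the curve in $[-1,1]^2$, independent of the square. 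Finally, your Bezout step should also rule out $g_x\equiv 0$ and $g_y\equiv 0$ (same argument as for $g_x\pm g_y$: $g$ would then be a univariate polynomial in one coordinate, hence not absolutely irreducible for $k\ge 2$); this is easy but should be said.
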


\begin{proof}
To achieve \eqref{eq:f'_bounded}, it is enough to subdivide by drawing horizontal and vertical lines through all the at most $O_k(1)$ points $(x,y)$ lying on the curve $g(x,y)=0$ where either $g_x = 0$, $g_y = 0$ or $g_x = \pm g_y$ (with $g_x := \partial g/\partial x$ and $g_y := \partial g/\partial y$) and choosing the appropriate projection onto a coordinate axis for each resulting arc of the curve.

Having done so, assume that we are in the case $y=f(x), x \in I$, and that $g$ is non-singular. It follows by repeated implicit differentiation that
\[
f'(x) = -\frac{g_x(x,y)}{g_y(x,y)}
\]
and in general
\[
f^{(\ell)}(x) = \frac{P_\ell(x,y)}{g_y(x,y)^{2 \ell - 1}}
\]
for some polynomial $P_\ell$ of degree less than $2 k \ell$. By \eqref{eq:f'_bounded} we have
\[
|g_y(x,f(x))| \geq  \min_{\substack{(x,y) \in [-1,1]^2 \\ g(x,y) = 0}} \max \{|g_x(x,y)|,|g_y(x,y)|\} =: c_g, 
\]
for $x \in I$, where $c_g >0$ since $g$ is assumed to be non-singular. Furthermore, $|P_\ell(x,y)|$ is clearly bounded from above by some constant $C_{g,\ell}$ for all $(x,y) \in [-1,1]^2$, so we obtain \eqref{eq:f_ell_bounded} with $A_\ell = C_{g,\ell}/c_g^{2\ell-1}$.
\end{proof}

\begin{lemma}
\label{lem:nonvanishing_derivative}
Let $g(x,y) \in \RR[x,y]$ be an absolutely irreducible polynomial of degree $d \geq 2$, let $I \subset \RR$ be an interval, and suppose that $h \in C^\infty(I)$ satisfies $g(x,h(x))= 0$ on $I$. Then, for every $a \in I$, there is some $1 \leq e \leq d$ for which
\[
h^{(e)}(a) \neq 0.
\]
\end{lemma}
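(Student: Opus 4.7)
The plan is to argue by contradiction: suppose that for some $a \in I$ one has $h^{(e)}(a) = 0$ for every $1 \le e \le d$, and set $b := h(a)$. Since $h$ is $d$ times differentiable at $a$, Taylor's theorem with Peano remainder gives
\[
h(x) - b = o\bigl((x - a)^{d}\bigr) \quad \text{as } x \to a.
\]

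Next I would transfer this bound to the polynomial $g(\cdot, b)$. The relation $g(x, h(x)) = 0$ on $I$ together with the mean value theorem applied in the $y$-variable yields
\[
\bigl|g(x, b)\bigr| = \bigl|g(x, b) - g(x, h(x))\bigr| \le C\,|h(x) - b|
\]
for $x$ in a compact neighborhood of $a$ inside $I$, where $C$ is any upper bound for $|g_y|$ on that neighborhood. Combined with the Taylor estimate this gives $g(x, b) = o\bigl((x-a)^d\bigr)$ as $x \to a$.

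Now set $P(x) := g(x, b) \in \RR[x]$; since $g$ has total degree $d$, the polynomial $P$ has degree at most $d$. Expanding $P$ in powers of $(x-a)$ and dividing successively by $(x-a), (x-a)^2, \dots, (x-a)^d$, the $o$-estimate forces every one of the (at most) $d+1$ Taylor coefficients of $P$ at $a$ to vanish, so $P \equiv 0$. By the factor theorem $(y - b) \mid g(x, y)$ in $\RR[x, y]$, and hence in $\CC[x, y]$. But $g$ is absolutely irreducible of degree $d \ge 2$, so any polynomial dividing $g$ in $\CC[x,y]$ is either a unit or an associate of $g$; since $y - b$ has degree $1 < d$, we reach the required contradiction.

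The only delicate point is recognising why exactly $d$ vanishing derivatives are needed: the polynomial step requires $P$ to be $o\bigl((x-a)^d\bigr)$ rather than merely $O\bigl((x-a)^d\bigr)$, as the latter would only force $P$ to be a scalar multiple of $(x-a)^d$, not identically zero. Everything else — the Taylor expansion, the mean value inequality, and the factor theorem — is entirely standard.
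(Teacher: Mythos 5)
Your proof is correct, but it takes a genuinely different route from the paper's. The paper differentiates the identity $g(x,h(x))=0$ repeatedly: the $\ell$-th derivative has the shape $\partial^\ell g/\partial x^\ell + \sum C_{\boldsymbol\alpha}(x,h)(h')^{\alpha_1}\dotsm(h^{(\ell)})^{\alpha_\ell}=0$, so if $h'(a)=\dotsb=h^{(d)}(a)=0$ then $\partial^d g/\partial x^d$ vanishes at $(a,h(a))$, which the paper declares impossible because $\partial^d g/\partial x^d$ is a non-zero constant. You instead combine Taylor's theorem with Peano remainder ($h(x)-b=o((x-a)^d)$), a mean value bound in the $y$-variable to transfer this to $P(x)=g(x,b)=o((x-a)^d)$, and the observation that a polynomial of degree at most $d$ with this decay must vanish identically, whence $(y-b)\mid g$, contradicting irreducibility in degree $d\geq 2$. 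Two remarks on the comparison. First, your argument uses irreducibility only through the absence of the linear factor $y-b$, and in particular does not need the coefficient of $x^d$ in $g$ to be non-zero; the paper's final step, as literally stated, does need that coefficient to be non-zero (absolute irreducibility alone does not guarantee it --- consider $y^2-x$), so your route is actually the more robust one at that point. Second, your closing remark correctly identifies the delicate spot: an $O((x-a)^d)$ bound would only force $P$ to be a multiple of $(x-a)^d$, so the Peano $o$-term, i.e.\ the vanishing of all $d$ derivatives rather than $d-1$, is exactly what is needed. The only minor caveats, both harmless, are that if $a$ is an endpoint of $I$ the limits extracting the coefficients of $P$ are one-sided (which still suffices), and that the constant $C$ bounding $|g_y|$ exists simply by continuity on a compact neighbourhood, with no non-vanishing hypothesis required.
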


\begin{proof}
By repeated implicit differentiation of the identity $g(x,h) = 0$ one sees that 
\[
\frac{\partial^\ell g}{\partial x^\ell} + \sum_{1 \leq \alpha_1 + \dotsb + \alpha_\ell \leq \ell} C_{\boldsymbol{\alpha}}(x,h) (h')^{\alpha_1} \dotsb (h^{(\ell)})^{\alpha_\ell} = 0,
\]
for any $\ell \geq 1$ and suitable polynomials $C_{\boldsymbol{\alpha}}(x,y)$. If we would have $h'(a) = \dotsb = h^{(d)}(a) = 0$, then the above identity applied for $\ell = d$ would imply that
\[
\frac{\partial^d g}{\partial x^d}(a) = 0.
\]
However, $\frac{\partial^d g}{\partial x^d}$ is a non-zero constant polynomial by the irreducibility assumption, yielding a contradiction.
\end{proof}

We import the following lemma from Bombieri and Pila \cite[Lemma 6]{Bombieri-Pila}:

\begin{lemma}
\label{lem:BP_Lemma6}
Let $g(x,y) \in \RR[x,y]$ be an absolutely irreducible polynomial of degree $d \geq 2$, let $I \subset \RR$ be an interval, and suppose that $f \in C^\infty(I)$ satisfies $g(x,f(x))= 0$ on $I$. Let $C_\ell$ be positive real numbers for $\ell = 1,\dotsc,K$. Then we can divide $I$ into at most $2d^2 K^2$ subintervals $I_\nu$ such that for each $\ell = 1,\dotsc,K$, one of the following holds:
\begin{align}
\label{eq:small_deriv}
|f^{(\ell)}(x)| &\leq C_\ell \quad \text{for all } x \in I_\nu,\\
\label{eq:large_deriv}
|f^{(\ell)}(x)| &\geq C_\ell \quad \text{for all } x \in I_\nu.
\end{align}
\end{lemma}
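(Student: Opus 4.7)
The plan is to locate the breakpoints at which $|f^{(\ell)}(x)| = C_\ell$ for some $\ell \in \{1,\dotsc,K\}$, bound their number via B\'ezout's theorem, and let them partition $I$.

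First, I would iterate implicit differentiation of the identity $g(x, f(x)) = 0$, exactly as in the proof of Lemma~\ref{lem:subdivision1}, to obtain a representation
\[
f^{(\ell)}(x) = \frac{P_\ell(x, f(x))}{g_y(x, f(x))^{2\ell-1}},
\]
with $P_\ell \in \RR[x,y]$ of total degree at most $(2\ell - 1)(d - 1)$. The denominator does not vanish on $I$, since $f \in C^\infty(I)$ is a smooth branch of $g = 0$.

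Next, for each $\ell$ and each sign $\epsi \in \{+1,-1\}$, set
\[
R_{\ell,\epsi}(x,y) := P_\ell(x,y) - \epsi\, C_\ell\, g_y(x,y)^{2\ell-1}.
\]
Then $\deg R_{\ell,\epsi} \leq (2\ell-1)(d-1)$, and the equation $f^{(\ell)}(x) = \epsi C_\ell$ on $I$ becomes the vanishing of $R_{\ell,\epsi}$ along the branch of $g = 0$ parametrised by $f$. If $g$ divides $R_{\ell,\epsi}$ in $\RR[x,y]$ (in particular if $R_{\ell,\epsi} \equiv 0$), then $f^{(\ell)}$ is identically $\epsi C_\ell$ on $I$, and both alternatives \eqref{eq:small_deriv} and \eqref{eq:large_deriv} hold on every subinterval for this $\ell$. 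Otherwise, absolute irreducibility of $g$ together with B\'ezout's theorem caps the number of common zeros of $g$ and $R_{\ell,\epsi}$ by $d(2\ell-1)(d-1)$.

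Summing over the two signs and using $\sum_{\ell=1}^K (2\ell - 1) = K^2$, the total count of breakpoints in $I$ is at most $2 d(d-1) K^2$. Partitioning $I$ at these points produces at most $2 d(d-1) K^2 + 1 \leq 2 d^2 K^2$ subintervals $I_\nu$. On any such $I_\nu$ and for each $\ell \leq K$, each of the continuous functions $f^{(\ell)} - C_\ell$ and $f^{(\ell)} + C_\ell$ is non-vanishing, hence of constant sign, and comparing the two signs yields the desired dichotomy between \eqref{eq:small_deriv} and \eqref{eq:large_deriv}.

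The only delicate point is the possibility that $g$ divides $R_{\ell,\epsi}$ for some pair $(\ell,\epsi)$, which renders B\'ezout inapplicable; but in that scenario $f^{(\ell)} \equiv \pm C_\ell$ on the whole of $I$, and both conclusions hold a fortiori, so this case actually requires no breakpoints at all. Matching the stated constant $2d^2 K^2$ is then essentially a bookkeeping exercise using the sharp degree bound $(2\ell-1)(d-1)$ for $P_\ell$, which itself follows inductively from the recursion $\deg P_{\ell+1} \leq \deg P_\ell + 2(d-1)$ obtained by differentiating the expression above.
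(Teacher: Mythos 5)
Your argument is correct in substance, and it is essentially the argument behind the result as it appears in the literature: the paper itself gives no proof of this lemma, importing it verbatim from Bombieri and Pila \cite[Lemma 6]{Bombieri-Pila}, whose proof is precisely this kind of B\'ezout count of the points on the curve where $f^{(\ell)}=\pm C_\ell$, together with the observation that between consecutive such points each $|f^{(\ell)}|-C_\ell$ has constant sign. Your degree recursion for $P_\ell$, the treatment of the degenerate case $g \mid R_{\ell,\epsi}$, and the bookkeeping $2d(d-1)K^2+1 \leq 2d^2K^2$ all check out.

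One side claim should be repaired, although it does not affect the conclusion. You assert that $g_y(x,f(x))$ does not vanish on $I$ ``since $f$ is a smooth branch of $g=0$''; this is not justified, because here $g$ is only assumed absolutely irreducible, not non-singular (in contrast to Lemma \ref{lem:subdivision1}), and a $C^\infty$ branch may pass through a singular point of the curve at which $g_y$ vanishes --- for instance $g=y^2-x^2(x+1)$ with the branch $f(x)=x\sqrt{1+x}$ through the node at the origin. The fix is one line: $g_y \not\equiv 0$ (otherwise $g$ would be a polynomial in $x$ alone of degree $d\geq 2$, hence not absolutely irreducible), so $g \nmid g_y$ and $g_y(x,f(x))$ has at most $d(d-1)$ zeros on $I$; the identity $P_\ell(x,f(x)) = f^{(\ell)}(x)\,g_y(x,f(x))^{2\ell-1}$, valid on the dense complement of these zeros, therefore extends to all of $I$ by continuity, and consequently every $x$ with $f^{(\ell)}(x)=\epsi C_\ell$ is still a common zero of $g$ and $R_{\ell,\epsi}$ (only this implication, not its converse, is needed). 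With that remark your count, and hence the proof, goes through unchanged.
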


In the situation under consideration, where $g(x,y) = F(x,y,1)$ for the non-singular (and hence absolutely irreducible) form $F$, an application of Lemma \ref{lem:covering} with $M = O(1)$ allows us to adapt the subdivision in Lemma \ref{lem:subdivision1} so that the following  holds:

If the square $S \subseteq [-1,1]^2$ contains a solution $(x/z,y/z)$ to  \eqref{eq:ternary_ineq} and \eqref{eq:z_biggest}, then one has 
\begin{equation}
\label{eq:partial_lower}
\partial_i F(x,y,1) \gg 1
\end{equation}
throughout $S$, where $i=2$ if $g(x,f(x)) = 0$ and $i=1$ if the other alternative $g(f(y),y)=0$ holds.

Without loss of generality, we restrict our attention to the first case $g(x,f(x)) =0$ from now on. We note that any solution to \eqref{eq:ternary_ineq} and \eqref{eq:z_biggest} with $(x/z,y/z) \in S$ satisfies
\[
\left|\frac{y}{z}-f\left(\frac{x}{z}\right)\right| \ll_F B^{\gamma-k},
\]
assuming that $g(x,f(x)) = 0$ in $S$, with the obvious modification if the other alternative $g(f(y),y)=0$ holds.
Indeed, put $(r,s) = (x/z,y/z)$ for short and observe that 
\begin{align*}
B^{\gamma-k} &\gg |F(r,s,1)| = |g(r,s)| = |g(r,s)-g(r,f(r))| \\
&= |\partial_2 F(r,s,1)| |s-f(r)|,
\end{align*}
so the claim follows from \eqref{eq:partial_lower}.

Given an interval $I$ as in Lemma \ref{lem:subdivision1}, our aim is now to count rational points $(x,y) = (a/q,b/q)$, with $x \in I$ and $B/2 \leq q \leq B$, satisfying
\begin{equation}
\label{eq:close}
\left| \frac{b}{q} - f\left(\frac{a}{q}\right)\right| \leq \frac{1}{B^{\tau}}.
\end{equation}
Recall that $\tau$ was defined in \eqref{eq:tau_def}.
Denote the number of such rational points, excluding those lying on the tangent lines $T_i$,  by 
\begin{equation}
\label{eq:N_I_def}
N_I = N_{f,\tau,I}(B).
\end{equation}

\subsection*{Contribution from lines}
We begin by treating the contribution from forms $A_i$ of degree 1 in Corollary \ref{cor:determinant}. 
Recall that for each form $A_i$, only points satisfying 
\begin{equation}
\label{eq:patch}
r_i \leq x < r_i+M^{-1} 
\end{equation}
need to be taken into account, where $M$ is as in \eqref{eq:M}.

We will use estimates from Huxley \cite{Huxley94}, where we take
\[
\delta = B^{1-\tau},
\]
as implied by \eqref{eq:close}. In Huxley's notation, we further take $M=T=\Delta= 1$ and $Q=B$. The estimate (1.4) in \cite[Theorem 1]{Huxley94} then states that if
\begin{equation}
\label{eq:huxley_fbis}
1/C \leq |f''(x)| \leq C
\end{equation}
on the interval $I$, then 
\begin{equation}
\label{eq:huxley1.4}
N_I \ll C \delta^{1/2} B^2 + C^{1/3}B,
\end{equation}
in the notation introduced above. A careful analysis of the arguments yields that if \eqref{eq:huxley_fbis} is replaced by a more precise assumption 
\begin{equation}
\label{eq:fbis_precise}
C_1 \leq |f''(x)| \leq C_2
\end{equation}
for some constants $C_1 \leq  C_2$, then the following version of \eqref{eq:huxley1.4} holds:
\begin{equation}
\label{eq:huxley1.4'}
N_I \ll \left(\frac{1}{C_1} + \sqrt{\frac{C_2}{C_1}}\right)\delta^{1/2} B^2 + \frac{1}{C_1^{1/3}} B.
\end{equation}
\begin{rem}
\label{rem:majorminor}
Note that if the rational points $(x_i,y_i)$ counted by $N_I$ are ordered by increasing $x$-coordinate $x_1<x_2<\dotsb$, then the first term in \eqref{eq:huxley1.4'} 
corresponds to points lying on 'major sides', that is, line segments containing three or more consecutive points $(x_i,y_i)$, whereas the second term counts the remaining points, lying on 'minor sides'. 
\end{rem}

Thus 
we wish to count rational points satisfying \eqref{eq:close} and in addition lying on a fixed rational line
\begin{equation}
\label{eq:ratline}
\ell x + m y + n = 0,
\end{equation}
where $\ell,m,n$ are integers with $(\ell,m,n) = 1$. We may assume that $I$ is contained in the interval \eqref{eq:patch}. By a subdivision as in Lemma \ref{lem:BP_Lemma6}, we may further suppose that either 
\begin{align}
\label{eq:fbis_small}
|f''(x)| &\leq 1/C \text{ for all } x \in I
\end{align}
or
\begin{align}
\label{eq:fbis_big}
|f''(x)| &\geq 1/C \text{ for all } x \in I,
\end{align}
for some constant $C>0$ to be chosen at a later stage. 

There is no loss of generality in assuming that $m \geq 0$ in \eqref{eq:ratline}, and in fact we must then have 
\[
m>0,
\]
by our assumption that $\gamma < k-2$. If we enumerate the rational points 
\[
(x_i,y_i) = \left(\frac{a_i}{q_i},\frac{b_i}{q_i}\right)
\]
satisfying these conditions as $(x_1,y_1), \dotsc,(x_R,y_R)$, where $x_1<x_2<\dotsb<x_R$, then by \cite[Lemma 5]{Huxley94} we have
\begin{equation}
\label{eq:msep}
x_{i+1} - x_i \geq \frac{m}{q_i q_{i+1}},
\end{equation}
so that 
\begin{equation}
\label{eq:pointsonline}
R \ll 1+\frac{B^2 \lambda}{m},
\end{equation}
where $\lambda := x_R-x_1$.

First we consider an interval $I$ where \eqref{eq:fbis_small} holds. By taking $C$ greater than some constant $C_0$ depending only on $F$, we may assume that each interval of this type contains exactly one point $\alpha$ such that $f''(\alpha) = 0$. 
By Lemma \ref{lem:nonvanishing_derivative} we have $f^{(d)}(\alpha) \neq 0$ for some $3 \leq d \leq k$. Suppose that $d = d_I$ is the minimal such choice, so that $f^{(j)}(\alpha) = 0$ for $2 \leq j \leq d-1$. Clearly we have $d \leq \nu_F$ for the quantity $\nu_F$ defined in the introduction. We now have the following result, where we recall that the quantity $N_I$ was defined in \eqref{eq:N_I_def}.

\begin{lemma}
\label{lem:roth_case}
Consider an interval $I$ where \eqref{eq:fbis_small} holds. Under the assumption \eqref{inflection_assumptions}, the contribution from each line \eqref{eq:ratline} to $N_I$ is 
\[
\ll 1 + B^{2+\epsi} C^{-2} + B^{2+\epsi-\tau/2} M^{-1/2}.
\]
More generally, the contribution is
\[
\ll 1 + B^{2+\epsi} L^{(d+1)/2} + B^{2+\epsi-\tau/2} L^{1/2},
\]
with $d=d_I$ as defined above, where 
\[
L \ll \min\{C^{-1/(d-2)},M^{-1}\}.
\]
\end{lemma}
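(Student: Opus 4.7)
The plan is to bound the contribution $R$ from a fixed line by combining Huxley's separation estimate with a divided-difference argument exploiting $|f^{(d)}(x)| \gtrsim 1$ near $\alpha$, and then matching the resulting two bounds to the claimed form.

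First, I would further subdivide $I$ by applying Lemma \ref{lem:BP_Lemma6} to $f^{(d)}$, at the cost of $O(1)$ additional pieces, so that $|f^{(d)}(x)| \geq c_F > 0$ throughout $I$; this is permissible because $f^{(d)}(\alpha) \neq 0$ by the definition of $d = d_I$ and by continuity of $f^{(d)}$. Enumerating the rational points on the line by their $x$-coordinates $x_1 < \dotsb < x_R$, Huxley's separation bound \eqref{eq:msep} gives the trivial estimate
\[
R \leq 1 + \lambda B^2 / m, \quad \lambda := x_R - x_1 \leq L.
\]
This already yields the first term $B^{2+\epsi} L^{(d+1)/2}$ in the regime $m \geq L^{-(d-1)/2}$, up to $B^\epsi$ losses.

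Next, when $R \geq d+1$, I would apply a divided-difference argument to $g(x) := f(x) + (\ell x + n)/m$, noting that $g^{(d)} = f^{(d)}$ since the line has vanishing higher derivatives. Selecting $d+1$ roughly evenly spread rational points $y_0 < \dotsb < y_d$ with consecutive spacings $\gtrsim (R-1)m/(dB^2)$, the identity $g[y_0,\dotsc,y_d] = f^{(d)}(\xi)/d!$ combined with the expansion as $\sum_i g(y_i)/\prod_{j \neq i}(y_i - y_j)$ and the inequality $|g(y_i)| \leq B^{-\tau}$ produces the bound $R \ll 1 + B^{2-\tau/d}/m$.

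The main obstacle is combining these two bounds precisely to recover the sharper term $B^{2+\epsi - \tau/2} L^{1/2}$. A naive geometric mean gives only $R \ll L^{1/2} B^{2-\tau/(2d)}/m$, whose $B$-exponent is weaker than the claimed $B^{-\tau/2}$ for $d \geq 3$. To close the gap, one likely needs to apply a Swinnerton--Dyer-style determinant argument to the sublattice of rational points on the line---in the spirit of the proof of Huxley's \eqref{eq:huxley1.4'}---exploiting the upper bound $|f''| \leq 1/C$ together with the boundedness $|\ell|, |m|, |n| \ll B^c$ inherited from Corollary \ref{cor:determinant}, and then performing a dyadic partition in the size of $m$. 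The integer constraint $m \geq 1$ is essential here: small $m$ makes the divided-difference bound dominant while large $m$ makes the separation bound tight, and the balance points $m \asymp L^{-(d-1)/2}$ and $m \asymp B^{\tau/2} L^{1/2}$ correspond exactly to the two claimed terms.
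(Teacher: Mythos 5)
Your proposal has a genuine gap, and you have correctly located it yourself: the combination of the separation bound $R \ll 1 + \lambda B^2/m$ with the divided-difference estimate $R \ll 1 + B^{2-\tau/d}/m$ only yields exponents like $B^{2-\tau/d}$ or $B^{2-\tau/(2d)}$, and no interpolation between these two purely analytic inputs can recover the claimed term $B^{2+\epsi-\tau/2}L^{1/2}$ once $d \geq 3$. The suggested fix via a Swinnerton--Dyer/Huxley determinant argument is not the right mechanism either: that method draws its strength from a \emph{lower} bound on $|f''|$ (the ``major side'' analysis), which is exactly what is unavailable on an interval of type \eqref{eq:fbis_small}, where $|f''| \leq 1/C$ near the inflection point. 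A further structural sign that something arithmetic is missing: your argument never uses the exclusion of the tangent lines $T_i$, yet the tangent line at the inflection point itself carries $\gg B^{2-\tau/d}$ admissible rational points on the patch, which for large $d$ exceeds the bound asserted in the lemma; so no proof that treats all lines uniformly by smoothness alone can succeed.

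The paper's proof is genuinely Diophantine. Writing $\alpha$ for the unique zero of $f''$ in $I$, the quantities $\beta = f(\alpha)$ and $\kappa = f'(\alpha)$ are \emph{algebraic} numbers attached to one of the finitely many inflection points. A Taylor expansion at $\alpha$ (using $f''(\alpha)=\dotsb=f^{(d-1)}(\alpha)=0$ and $|x_i-\alpha| \ll L$) shows that the slope of any admissible line through the extreme points satisfies $|{-\ell/m}-\kappa| \ll (L^{d}+B^{-\tau})/\lambda$. Roth's theorem then gives $|\ell/m+\kappa| \gg_\epsi m^{-2-\epsi}$ unless $\kappa = -\ell/m$ exactly; combining with the trivial alternative $\lambda \leq L$ yields
\[
\frac{\lambda}{m} \ll B^\epsi\min\Bigl\{\frac{L}{m},\, mL^{d}+mB^{-\tau}\Bigr\} \ll B^\epsi\bigl(L^{(d+1)/2}+B^{-\tau/2}L^{1/2}\bigr),
\]
and inserting this into \eqref{eq:pointsonline} gives the lemma; the exponent $\tau/2$ is precisely the shadow of the exponent $2+\epsi$ in Roth's theorem, which no divided-difference argument can see. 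In the degenerate case $\kappa = -\ell/m$, the line is either the excluded tangent $T_i$ at the inflection point --- this is where the hypothesis enters --- or Roth's theorem is applied a second time, now to the algebraic intercept $\beta - (\ell/m)\alpha$, giving a bound subsumed in the one above. Your first term $B^{2+\epsi}L^{(d+1)/2}$ is recovered correctly by your separation argument, but the heart of the lemma is this Roth-based control of small $m$, which your proposal does not supply.
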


\begin{proof}
With $\alpha$ as above, let $\beta = f(\alpha)$ and $\kappa = f'(\alpha)$. Note that $\alpha,\beta,\kappa$ are algebraic numbers.  Since there are only finitely many choices for $\alpha$, corresponding to the finitely many inflection points of the algebraic curve, we may in fact bound $f^{(d)}(\alpha)$ from above and below by constants depending only on the form $F$. 


By \eqref{eq:patch}, \eqref{eq:fbis_small} and Lemma \ref{lem:russian} we have
\[
|I| \ll L:= \min\{C^{-1/(d-2)},M^{-1}\}.
\]
For $i=1,R$, we may now write
\begin{align*}
y_i &= f(x_i) + O(B^{-\tau}) \\
&= \beta + \kappa (x_i-\alpha) + O(|x_i-\alpha|^d) + O(B^{-\tau}).
\end{align*}
Using the bound $|x_i -\alpha| \ll L$,
this means that the slope of the line \eqref{eq:ratline} satisfies
\begin{equation}
\label{eq:slopeappr}
-\frac{\ell}{m}-\kappa = \frac{y_R-y_1-\kappa(x_R-x_1)}{x_R-x_1} \ll \frac{L^{d} + B^{-\tau}}{\lambda},
\end{equation}
where as before $\lambda = x_R-x_1>0$.
On the other hand, if $\kappa$ is irrational, then by Roth's theorem \cite[Thm V.2A]{Schmidt80}
we also have
\[
\left|\frac{\ell}{m}+\kappa\right| \gg_\epsi m^{-2-\epsi} 
\]
for any $\epsi >0$. The same bound also holds if $\kappa$ is rational, but $\kappa \neq -\ell/m$. Inserting this bound into \eqref{eq:slopeappr}, we get
\[
\frac{\lambda}{m} \ll m^{1+\epsi}(L^{d} + B^{-\tau}). 
\]
Taking the alternative bound $\lambda \ll L$ into account, we thus have
\begin{align*}
\frac{\lambda}{m} &\ll B^\epsi \min\left\{\frac{L}{m},mL^{d} + mB^{-\tau}\right\} \\
&\ll B^\epsi \left(L^{(d+1)/2} + B^{-\tau/2}L^{1/2}\right).
\end{align*}
Inserting this into \eqref{eq:pointsonline} gives
\begin{equation}
\label{eq:inserting}
\begin{split}
R &\ll 1+ B^{2+\epsi} \left(L^{(d+1)/2} + B^{-\tau/2}L^{1/2}\right)\\
&\ll 1 + B^{2+\epsi} L^{(d+1)/2} + B^{2+\epsi-\tau/2}L^{1/2}.
\end{split}
\end{equation}

We now consider the case where $\kappa$ is rational and $\kappa = -\ell/m$. But in this case we have
\[
-\frac{n}{m} -\frac{\ell}{m} x_1 = y_1 = \beta - \frac{\ell}{m}(x_1-\alpha) + O(|x_1-\alpha|^d) + O(B^{-\tau}).
\]
So either the line \eqref{eq:ratline} coincides with the tangent line at $(\alpha,f(\alpha))$, in which case it is excluded from consideration, or we may again use Roth's theorem to conclude that
\[
m^{-2-\epsi} \ll \left|\frac{n}{m}+(\beta-\frac{\ell}{m}\alpha)\right| \ll L^{d} + B^{-\tau}.
\]
This gives
\begin{align*}
\frac{\lambda}{m} &\ll \lambda m B^\epsi (L^{d} + B^{-\tau}),
\end{align*}
so the resulting bound for $R$ in this case is subsumed in the bound \eqref{eq:inserting}.

The assumption \eqref{inflection_assumptions} means that $d = 3$ in the above considerations. Thus we have $L \leq C^{-1}$ and \eqref{eq:inserting} implies
\[
R \ll 1 + B^{2+\epsi}C^{-2} + B^{2+\epsi-\tau/2}M^{-1/2}.
\]
\end{proof}

Suppose next that the alternative \eqref{eq:fbis_big} holds. 
By Lemma \ref{lem:subdivision1} we still have $|f''(x)| \ll_F 1$ on $I$, so we can divide $I$ further into $O(\log C)$ subintervals, on each of which $f''$ satisifes the bound
\begin{equation}
\label{eq:dyadicfbis}
\frac{1}{D} \leq |f''(x)| \leq \frac{2}{D}
\end{equation}
for some $1 \ll D \leq C$. 

Since each minor side, in the notation of Remark \ref{rem:majorminor}, contributes only $O(1)$ points to $N_I$. Since at most $M$ lines are considered in total, the overall contribution from minor sides is $O(M)$. Thus we may focus on the contribution from major sides. To estimate this, we shall use the duality in \cite[Section 3]{Huxley94}. Let $h(y)$ be the inverse function of $f'(x)$ on the interval $I$. Put
\[
g(y) = yh(y)-f(h(y))
\]
so that $g'(y) = h(y)$ and
\[
g''(y) = h'(y) = \frac{1}{f''(h(y))}. 
\]
Let $\cL$ be a rational line as in \eqref{eq:ratline}. The set of points such that $(x,y) \in \cL$ and 
\begin{equation}
\label{eq:close'}
|y-f(x)|\leq \frac{\delta}B
\end{equation} 
consists of at most two disjoint line segments, as observed in \cite[Lemma 11]{Huxley94}. In fact, a careful inspection of the proof of \cite[Lemma 11]{Huxley94} gives the following.

\begin{lemma}
\label{lem:huxleyL11}
Let $I$ be an interval on which $f$ satisfies \eqref{eq:dyadicfbis}. Let $J \subseteq I$ be an interval such that \eqref{eq:close'} holds for all $(x,y) \in \cL$ with $x \in J$. Suppose that there exist two rational points $(x_i,y_i) = (a_i/q_i,b_i/q_i)$, $i=1,2$,  such that $(x_i,y_i) \in \cL$ and $x_i \in J$. If 
\[
\lambda := x_2-x_1 >0,
\]
then there exists $\xi \in J$ with
\begin{equation}
\label{eq:huxley3.2}
\left|\frac{\ell}{m}+f'(\xi)\right| \leq \frac{2\delta}{\lambda B},
\end{equation}
and we have
\begin{equation}
\label{eq:huxley3.3}
\left|g\left(-\frac{\ell}{m}\right)-\frac{n}{m}\right| \leq \frac{\delta}{B} + \frac{4D\delta^2}{\lambda^2B^2},
\end{equation}
where $D$ is the quantity introduced in \eqref{eq:dyadicfbis}.
\end{lemma}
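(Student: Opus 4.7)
The plan is to work with the single auxiliary function
\[
\Phi(x) := y_L(x) - f(x), \qquad y_L(x) := -\frac{\ell x + n}{m},
\]
which measures the vertical distance from the line $\cL$ to the graph of $f$. The hypothesis on $J$ translates to $|\Phi(x)| \leq \delta/B$ for \emph{all} $x \in J$, and in particular $|\Phi(x_i)| \leq \delta/B$ at the rational points. The derivatives satisfy $\Phi'(x) = -\ell/m - f'(x)$ and $\Phi''(x) = -f''(x)$, so $|\Phi''| \in [1/D,2/D]$ with constant sign on $I$, and a direct computation shows that
\[
g(-\ell/m) - n/m = \Phi(\xi^*), \qquad \xi^* := h(-\ell/m).
\]
Crucially, $\xi^*$ is the unique zero of $\Phi'$ in $I$.

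For \eqref{eq:huxley3.2}, I would apply the mean value theorem to $\Phi$ on $[x_1,x_2] \subseteq J$, producing $\xi \in (x_1,x_2) \subseteq J$ with
\[
\Phi'(\xi) = \frac{\Phi(x_2)-\Phi(x_1)}{\lambda}, \qquad |\Phi'(\xi)| \leq \frac{2\delta}{\lambda B},
\]
which is \eqref{eq:huxley3.2} once rewritten in terms of $\ell/m + f'(\xi)$.

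For \eqref{eq:huxley3.3}, I would split on whether $\xi^* \in J$. If $\xi^* \in J$, the hypothesis on $J$ gives $|\Phi(\xi^*)| \leq \delta/B$ directly, which is stronger than required. If $\xi^* \notin J$, then $\Phi'$ has no zero in $J$ and hence constant sign there, so $\xi^*$ lies strictly to one side of $J$; without loss of generality $\xi^* < \min J$. The point $\xi$ from the previous step then satisfies $\xi^* < x_1 \leq \xi$, and combining $|\Phi'(\xi)| \leq 2\delta/(\lambda B)$ with $|\Phi''| \geq 1/D$ yields
\[
0 < x_1 - \xi^* \leq \xi - \xi^* \leq D\,|\Phi'(\xi)| \leq \frac{2D\delta}{\lambda B}.
\]
Taylor's formula with integral remainder at $\xi^*$ (using $\Phi'(\xi^*)=0$ and $|\Phi''| \leq 2/D$) gives
\[
|\Phi(x_1)-\Phi(\xi^*)| \leq \frac{(x_1-\xi^*)^2}{D} \leq \frac{4D\delta^2}{\lambda^2 B^2},
\]
and the triangle inequality $|\Phi(\xi^*)| \leq |\Phi(x_1)| + |\Phi(x_1)-\Phi(\xi^*)|$ finishes the proof.

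The main subtlety is the case split: a naive Taylor expansion of $\Phi$ around $\xi^*$ using only the endpoint bounds $|\Phi(x_i)| \leq \delta/B$ would degrade by an additive term of order $\lambda^2/D$ whenever $\xi^*$ lies between $x_1$ and $x_2$. The assumption that $|\Phi|$ is small \emph{throughout} $J$, rather than only at two rational points, is exactly what trivialises the case $\xi^* \in J$ and so allows the sharp estimate \eqref{eq:huxley3.3} to hold.
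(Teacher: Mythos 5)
Your proof is correct. The paper itself does not prove this lemma: it is obtained by citing ``a careful inspection of the proof of Lemma 11 in Huxley's paper'', so your self-contained argument is a genuine (and welcome) reconstruction of what that inspection yields -- the same Legendre-duality mechanism: the identity $g(-\ell/m)-n/m=\Phi(\xi^*)$ with $\Phi$ the signed vertical distance from $\cL$ to the graph and $\xi^*$ the stationary point of $\Phi$, the mean value theorem on $[x_1,x_2]$ for \eqref{eq:huxley3.2}, and a second-order Taylor estimate with $\Phi'(\xi^*)=0$ together with $|\xi-\xi^*|\leq D|\Phi'(\xi)|$ for \eqref{eq:huxley3.3}. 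Two remarks. First, your case split on whether $\xi^*\in J$ is unnecessary: since the MVT point $\xi$ already lies in $J$, the hypothesis gives $|\Phi(\xi)|\leq\delta/B$, and expanding around $\xi^*$ evaluated at $\xi$ (rather than at $x_1$) gives
\[
|\Phi(\xi^*)|\leq|\Phi(\xi)|+\tfrac12\sup|\Phi''|\,(\xi-\xi^*)^2\leq\frac{\delta}{B}+\frac1D\Bigl(\frac{2D\delta}{\lambda B}\Bigr)^2,
\]
which is \eqref{eq:huxley3.3} in one stroke, wherever $\xi^*$ sits relative to $J$; so the ``main subtlety'' you flag is really an artifact of centering at $x_1$. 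Second, your argument (like the statement itself) tacitly requires $-\ell/m$ to lie in the domain of $h$, i.e.\ $\xi^*=h(-\ell/m)\in I$, since $g(-\ell/m)$ is only defined there; this is a boundary convention inherited from Huxley's setup (one may extend $f$ slightly beyond $I$ keeping \eqref{eq:dyadicfbis}), not a defect of your proof, but it is worth stating explicitly that the Taylor segment joining $\xi^*$ to the point of expansion stays inside the interval on which the bounds on $\Phi''$ hold.
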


Each line $\cL$ corresponding to a major side of length $\lambda$ corresponds, by Lemma \ref{lem:huxleyL11}, to a rational point $(x,y) = (\ell/m,n/m)$ close to the dual curve $y=g(x)$, the distance being a decreasing function of $\lambda$. We shall count  those rational points using the bound \eqref{eq:huxley1.4'} above. First we observe that $x = \ell/m$ belongs to an interval of length $O(1)$. Indeed, by \eqref{eq:huxley3.2}, $\ell/m$ lies within a distance $2\delta/(\lambda B)$ of a value of $f'(x)$, which in turn takes values in an interval of length $O(1)$. But
\[
\frac{\delta}{\lambda B} \ll \frac{B^{-\tau}}{\lambda} \ll B^{2-\tau} \ll 1,
\] 
since we always have $\lambda \gg B^{-2}$.
(Our assumption that $\tau \geq 2$ simplifies our treatment compared to Huxley's, where the scaling factor $M$ comes into play.)

Let us count major sides in the range
\begin{equation}
\label{eq:major_range}
\Lambda \leq \lambda < 2 \Lambda, \quad B' \leq m < 2B'.
\end{equation}
We only need to consider values
\begin{equation}
\label{eq:Lambdabound}
B^{-2} \ll \Lambda \ll \mu:= \sqrt{\frac{D\delta}{B}}.
\end{equation}
The upper bound is provided by \cite[Lemma 6]{Huxley94}. Indeed, if $R$ denotes the total number of points on the major side, then we may divide it into two halves, each of which has at least $(R+1)/2$ points, and the shorter of which has length at most $\mu$ by \cite[Lemma 6]{Huxley94}. The contribution to $N_I$ from each major side will be 
\[
\ll \frac{B^2\Lambda}{B'}
\]
by \eqref{eq:msep}, so we may assume that $B'\ll \Lambda B^2$. Let $\delta'$ be given by
\[
\frac{\delta'}{B'} = \frac{\delta}{B} + \frac{4D\delta^2}{\Lambda^2B^2},
\]
where the right hand side comes from \eqref{eq:huxley3.3}. By the upper bound \eqref{eq:Lambdabound}, we in fact have
\begin{equation}
\label{eq:delta'}
\frac{\delta'}{B'} \asymp \frac{D\delta^2}{\Lambda^2B^2}.
\end{equation}
 
Since $D/2 \leq |g''(x)| \leq D$, the bound \eqref{eq:huxley1.4'} 
implies that we have
\[
\ll \delta'^{1/2} B'^2 + B'
\]
choices for $\ell,m,n$. 
At this point we remind ourselves that we are only seeking to estimate the contribution $N_{I,1}$, say, to $N_{I}$ from those at most $O(M)$ lines that arose in the application of Corollary \ref{cor:determinant}. 
The total contribution to $N_{I,1}$ from major sides in the range \eqref{eq:major_range} is therefore
\begin{align*}
& \ll \frac{\Lambda B^2}{B'} \min\left\{
M,\left(\delta'^{1/2} B'^2 + B'\right)\right\}
\\
& \ll  \Lambda B^2 + 
\frac{\Lambda B^2}{B'} \min\left\{
M,\left(\frac{\delta'}{B'}\right)^{1/2} B'^{5/2}\right\} \\
&\ll \Lambda B^2 + 
\frac{\Lambda B^{2}}{B'} M^{3/5}\left(\left(\frac{\delta'}{B'}\right)^{1/2} B'^{5/2}\right)^{2/5}\\
&\ll \Lambda B^2 + 
\Lambda B^{2} M^{3/5}
\left(\frac{\delta'}{B'}\right)^{1/5}. 
\end{align*}
Inserting \eqref{eq:delta'}, we note that $\Lambda$ occurs to a positive power in the resulting expression, whereas $B'$ does not occur at all. Summing over dyadic ranges for $B'\ll \Lambda B^2$ and then $\Lambda \ll \max\{B^{-2},\sqrt{D\delta/B}\}$ thus results in a bound
\[
\ll 1 + D^{1/2} B^{2-\tau/2+\epsi} 
+ D^{1/2} M^{3/5} B^{2-7\tau/10+\epsi}
\]
for the contribution to $N_{I,1}$ from all major sides. Taking minor sides into account, we get
\[
N_{I,1} \ll
M + D^{1/2} B^{2-\tau/2+\epsi} + D^{1/2} M^{3/5} B^{2-7\tau/10+\epsi}.
\]
The presence of the first term $M$ shows that no loss occurred through the interpolation step above.
Summing this over the dyadic intervals \eqref{eq:fbis_precise} (assuming $C = B^{O(1)}$), we arrive at the following bound.

\begin{lemma}
\label{lem:duality}
For an interval where \eqref{eq:fbis_big} holds, we have
\[
N_{I,1} \ll B^{9/(4\tau)+\epsi} + C^{1/2} B^{2-\tau/2+\epsi} + C^{1/2} B^{2+27/(20\tau)-7\tau/10+\epsi}.
\]
\end{lemma}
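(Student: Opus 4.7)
The plan is to combine the single-scale estimate derived in the paragraph preceding the lemma with the size bound on $M$ from Corollary~\ref{cor:determinant}. Recall that on the interval $I$ where \eqref{eq:fbis_big} holds, $|f''|$ has already been subdivided into $O(\log C)$ dyadic ranges on each of which
\[
\frac{1}{D} \leq |f''(x)| \leq \frac{2}{D}, \qquad 1 \ll D \leq C,
\]
and the duality argument via Lemma~\ref{lem:huxleyL11}, together with the interpolation step over major sides in the range \eqref{eq:major_range}, has established on each such dyadic piece the bound
\[
N_{I,1} \ll M + D^{1/2} B^{2-\tau/2+\epsi} + D^{1/2} M^{3/5} B^{2-7\tau/10+\epsi}.
\]

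My first step is to sum this estimate over the $O(\log C)$ dyadic values of $D$. Since the second and third terms are monotone increasing in $D$ and the exponent of $D$ is positive, their total is dominated, up to the log factor, by the value at the endpoint $D \asymp C$; the first term $M$ is independent of $D$ and so is merely multiplied by $O(\log C)$. Using the hypothesis $C = B^{O(1)}$, all such logarithms can be absorbed into the factor $B^\epsi$, yielding
\[
N_{I,1} \ll M + C^{1/2} B^{2-\tau/2+\epsi} + C^{1/2} M^{3/5} B^{2-7\tau/10+\epsi}.
\]

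Second, I would substitute the bound $M \ll_{F,\gamma,\epsi} B^{9/(4\tau)+\epsi}$ coming from \eqref{eq:M}, recalling that $\tau = k-\gamma$. The first term collapses to $B^{9/(4\tau)+\epsi}$, and the third term becomes $C^{1/2} B^{27/(20\tau)+2-7\tau/10+\epsi}$, since $\tfrac{3}{5} \cdot \tfrac{9}{4\tau} = \tfrac{27}{20\tau}$. Collecting the three contributions gives exactly the bound claimed in the lemma.

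The main difficulty has already been surmounted in the passage immediately preceding the statement, namely the application of the duality between the curve $y=f(x)$ and its Legendre transform $y=g(x)$, the use of Lemma~\ref{lem:huxleyL11} to convert major sides into rational approximations of the dual curve, and the interpolation between the trivial bound $M$ on the number of auxiliary lines and the Huxley estimate \eqref{eq:huxley1.4'} applied to the dual counting problem. What remains for the present lemma is purely bookkeeping: dyadic summation in the curvature parameter and a single substitution of the size bound on $M$.
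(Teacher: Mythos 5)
Your proposal is correct and follows essentially the same route as the paper: the paper likewise sums the bound $M + D^{1/2}B^{2-\tau/2+\epsi} + D^{1/2}M^{3/5}B^{2-7\tau/10+\epsi}$ over the $O(\log C)$ dyadic curvature ranges (with $C = B^{O(1)}$, so the logarithms are absorbed into $B^\epsi$ and the endpoint $D \asymp C$ dominates), and then inserts $M \ll B^{9/(4\tau)+\epsi}$ from \eqref{eq:M} to obtain the stated three terms. The only remark is purely cosmetic: the heavy lifting is indeed in the duality and interpolation argument preceding the lemma, and your bookkeeping reproduces the paper's conclusion exactly.
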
 

The corresponding bound for an interval $I'$ of type \eqref{eq:fbis_small}, as provided by Lemma \ref{lem:roth_case}, is 
\begin{equation}
\label{eq:lem_roth_3}
N_{I',1} \ll M + MB^{2+\epsi} C^{-2} + M^{1/2} B^{2-\tau/2+\epsi}
\end{equation}
under the assumption \eqref{inflection_assumptions}. We balance these two bounds by  choosing
\[
C = \min\{M^{2/5} B^{\tau/5}, M^{4/25} B^{7\tau/25}\}.
\]
The third term in \eqref{eq:lem_roth_3} is then rendered negligible. Let us denote the total contribution to $N^*_\gamma(F,B)$ from linear auxiliary forms $A_i$ by $N^{(1)}$. Recalling that the number of intervals $I$ under consideration is at most $O_k(1)$, the above elaborations result in the bound

\begin{equation}
\label{eq:lines_final}
\begin{split}
N^{(1)} &\ll M + B^{2+\epsi}(M^{1/5}B^{-2\tau/5} + M^{17/25} B^{-14\tau/25})\\
&\ll B^{9/(4\tau)+\epsi} + B^{2+9/(20\tau)-2\tau/5+\epsi} + B^{2+153/(100\tau)-14\tau/25+ \epsi}.
\end{split}
\end{equation}
We note that the bound in \eqref{eq:lines_final} is
\begin{align*}
&o(B) && \text{for } \tau \geq 2.89..., \\
&o(B^{1/2}) && \text{for } \tau \geq 4.5,\\
&O(B^{9/(4\tau) + \epsi}) && \text{for } \tau \geq 3.83....
\end{align*}

In the general case, Lemma \ref{lem:roth_case} gives
\begin{equation}
\label{eq:lem_roth_nu}
N_{I,1} \ll M + MB^{2+\epsi} C^{-\frac{d+1}{2(d-2)}} + M^{1/2} B^{2-\tau/2+\epsi}.
\end{equation}
Since 
\[
\frac{d+1}{2(d-2)} = \frac{1}{2} + \frac{3}{2(d-2)}
\]
is decreasing in $d$, we may replace $d$ by $\nu_F$ in the above estimate. We simplify the optimisation in this case by concentrating on the middle term in Lemma \ref{lem:duality}, which dominates the third one as soon as $\tau$ is at least $ \frac{3\sqrt{3}}{2} \approx 2.6$. Thus we put
\[
C = (MB^{\tau/2})^{\frac{2(\nu-2)}{2\nu-1}}.
\]
This gives
\begin{equation}
\label{eq:lines_final_nu}
\begin{split}
N^{(1)} &\ll M^{1+\epsi} + M^{\frac{\nu-2}{2\nu-1}}B^{2+\epsi-\frac{\tau}{2} + \frac{\tau(\nu-2)}{2(2\nu-1)}} \\
&\hphantom{\ll} + M^{\frac{3}{5} + \frac{\nu-2}{2\nu-1}} B^{2+\epsi-\frac{7\tau}{10} + \frac{\tau(\nu-2)}{2(2\nu-1)}}. \end{split}
\end{equation}
(Clearly the third term in \eqref{eq:lem_roth_nu} will give a negligible contribution this time as well.) Noting that the quantity on the right hand side of \eqref{eq:lines_final_nu} is increasing in $\nu$ for the relevant range of $\nu$, we may take the limit as $\nu \to \infty$ to obtain the worst case bound
\begin{equation}
\label{eq:lines_final_worst}
\begin{split}
N^{(1)} &\ll M^{1+\epsi} + M^{1/2}B^{2-\tau/4+\epsi} + M^{11/10} B^{2-9\tau/20+\epsi}.
\end{split}
\end{equation}

We may improve the bound in the general case as follows. Using the fact that
\[
L = \min\{C^{-1/(d-2)},M^{-1}\} \leq (C^{-1/(d-2)})^{(1-\alpha)}(M^{-1})^{\alpha}
\]
for any $\alpha \in [0,1]$, we pick $\alpha = 2/(d+1)$, which allows us to replace the middle term in \eqref{eq:lem_roth_nu} by 
\[
MB^{2+\epsi} L^{(d+1)/2} \leq B^{2+\epsi} C^{-\frac{d-1}{2(d-2)}} \leq B^{2+\epsi} C^{-\frac{\nu-1}{2(\nu-2)}}.
\] 
Putting
\[
C = B^{\frac{\tau(\nu-2)}{2\nu-3}},
\]
we arrive at the bound
\begin{equation}
\label{eq:lines_final_improved}
\begin{split}
N^{(1)} &\ll M^{1+\epsi} + B^{2+\epsi-\frac{\tau}{2} + \frac{\tau(\nu-2)}{2(2\nu-3)}} 
+ M^{\frac{3}{5} + \frac{\nu-2}{2\nu-3}} B^{2+\epsi-\frac{7\tau}{10} + \frac{\tau(\nu-2)}{2(2\nu-3)}} \end{split}.
\end{equation}
Letting $\nu \to \infty$ gives the improved worst case bound
\begin{equation}
\label{eq:lines_final_worst_improved}
\begin{split}
N^{(1)} &\ll M^{1+\epsi} + B^{2-\tau/4+\epsi} + M^{3/5} B^{2-9\tau/20+\epsi}.
\end{split}
\end{equation}
The bound in \eqref{eq:lines_final_worst_improved} is $O(B)$ as soon as $\tau > 4$.

\subsection*{Contribution from conics}

We now treat the contribution to the counting function $N_\gamma(F,B)$ coming from forms $A_i$ from Corollary \ref{cor:determinant} of degree 2 or more. By a standard argument as in \cite[Cor.~1]{Heath-Brown02}, we may assume that $A_i$ is absolutely irreducible.

Suppose that $\deg A_i = 2$, so that $Q(x,y,z) = A_i(x,y,z)$ is an absolutely irreducible, and hence non-singular, quadratic form. Suppose to begin with that $Q$ is not (proportional to) one of the exceptional quadratic forms defining the finitely many osculating conics $O_j$ at sextactic points, introduced in \S \ref{sec:intro}.
We shall use the familiar parameterization of the conic $Q=0$ by quadratic forms, for which we use \cite{Sofos14} as a reference. We may clearly assume that $Q$ has some primitive integral zero $\xxi$ in the box $[-B,B]^3$. There is then, as in \cite[\S6]{Sofos14}, a matrix $\cM\in \mathrm{SL}_3(\ZZ)$ with coefficients of size $O(B)$ such that the quadratic form $Q'(\xx) := Q(\cM\xx)$ has a zero at $(0,1,0)$. Combining this with the parameterization of such quadratic forms expounded in \cite[\S2,3]{Sofos14}, one obtains binary quadratic forms $g_1,g_2,g_3$ with integer coefficients such that each zero $(x,y,z) \in \ZZ^3_{\prim}$  other than $\xxi$ satisfies
\begin{equation}
\label{eq:conic_param}
(x,y,z) = \pm \left(\frac{g_1(s,t)}{\eta},\frac{g_2(s,t)}{\eta},\frac{g_3(s,t)}{\eta}\right)
\end{equation}
for some $(s,t) \in \ZZ^2_{\prim}$, where
\[
\eta = \eta(s,t) = \gcd(g_1(s,t),g_2(s,t),g_3(s,t)).
\]
Furthermore, by \cite[Lemma 2.2]{Sofos14} any natural number $\eta$ occurring as denominator has to be among the divisors of the discriminant of $Q'$. 

\begin{def*}
Given such an $\eta$, let $S_\eta$ be the set of pairs 
$(s,t) \in \ZZ^2_\prim$ satisfying  
\begin{equation*}
\label{eq:eta}
\eta | \gcd(g_1(s,t),g_2(s,t),g_3(s,t)).
\end{equation*}
\end{def*}
One then sees that for each $\eta$ occurring in \eqref{eq:conic_param}, $S_\eta$ is contained in one of the lattices
\[
\Lambda_{\eta,(s_0,t_0)} := \left\{(s,t) \in \ZZ^2 : (s,t) \equiv (\lambda s_0,\lambda t_0) \Mod{\eta} \text{ for some } \lambda \in \ZZ \right\},
\] 
for $(s_0,t_0)$ ranging over at most $4^{\omega(\eta)} = O(B^\epsi)$ pairs in $\ZZ^2_{\prim}$, where $\omega(\cdot)$ denotes the number of distinct prime factors. Let us verify this for $\eta = p^\alpha$, from which the general case follows by the Chinese remainder theorem. Clearly we can assume that at least one of the binary quadratic forms $g_i$ does not vanish identically (mod $p$). There are then two cases to consider. If $g_i(s,t)$ is proportional to the form $st$ (mod $p^\alpha$), then clearly 
$S_{p^\alpha}$ is contained in the union of  $\Lambda_{p^\alpha,(0,1)}$ and $\Lambda_{p^\alpha,(1,0)}$. If $g_i$ does not have this form --- say without loss of generality that the coefficent of $s^2$ is not divisible by $p^\alpha$ --- then the elementary theory of quadratic congruences shows that the congruence $g_i(u,1) \equiv 0 \Mod{p^\alpha}$ can have at most four solutions $u \in \ZZ/p^\alpha\ZZ$. In this case any $(s,t) \in S_{p^\alpha}$ has both coordinates indivisible by $p$, so it follows that $S_{p^\alpha}$ is contained in the union of the corresponding at most four lattices $\Lambda_{p^\alpha,(u,1)}$.

It is also not hard to compute the determinant of $\Lambda_\eta =\Lambda_{\eta,(s_0,t_0)}$ for any pair $(s_0,t_0)$: we have
\begin{equation}
\label{eq:detLambda}
\det(\Lambda_{\eta}) = \eta.
\end{equation}
Indeed, the lattice $\eta \ZZ^2$ is contained in $\Lambda_\eta$ and $[\Lambda_\eta:\eta\ZZ^2] = \eta$, so we have
\[
\det \Lambda_\eta = [\ZZ^2:\Lambda_\eta] = \frac{[\ZZ^2:\eta\ZZ^2]}{[\Lambda_\eta:\eta\ZZ^2]} = \frac{\eta^2}{\eta} = \eta.
\]

It is now time to invoke our results from \S \ref{sec:binary} on binary forms. To this end, consider the binary form 
\[
G(s,t) = F(g_1(s,t),g_2(s,t),g_3(s,t))
\]
of degree $2k$. The discussion above shows that in order for a point $(s,t) \in \ZZ^2_\prim$ to contribute to $N_\gamma(F,B)$, it has to lie in one of $O(B^\epsi)$ lattices $\Lambda_\eta$ of determinant $\eta$, and at the same time satisfy
\[
|G(s,t)| \leq \eta^k B^{\gamma}.
\]
Furthermore, by \cite[Lemma 4.1]{Sofos14} and the fact that the coefficients of $Q$ are polynomially bounded in $B$, we must at least have $|s|,|t| \leq B^\kappa$ for some constant $\kappa$. In the notation of \S \ref{sec:binary}, we then have to estimate the quantity $N_G(\Lambda_\eta,B^\kappa,\eta^kB^\gamma)$. Let us assume for simplicity that $k \geq 5$. Since we have excluded the possibility 
that $Q$ is among the exceptional quadratic forms, we then have
\[
a(G) \leq 5 \leq \frac{2k}{2}
\] 
in the notation of \S \ref{sec:binary}. Proposition \ref{prop:binary} then provides the bound
\[
N_G(\Lambda_\eta,B^\kappa,\eta^kB^\gamma) \ll \left(\frac{\eta B^{\gamma/k}}{\eta} + 1\right) \log B \ll B^{\gamma/k + \epsi} = B^{1-\tau/k + \epsi}. 
\]
Summing the contributions from all lattices $\Lambda$, we see that the total contribution to $N_\gamma(F,B)$ from non-exceptional quadratic auxiliary forms is 
\[
O(B^{9/(4\tau) + 1-\tau/k + \epsi}).
\]

\subsubsection*{Contribution from osculating conics}

Next we treat the contribution from one of the exceptional conics $O_j$. Let $Q(x,y,z) \in \ZZ[x,y,z]$ be an irreducible quadratic form, and let $q(x,y) = Q(x,y,1)$. We may restrict our attention to an interval $I$ as in Lemma \ref{lem:subdivision1}, where $g(x,f(x)) = 0$. By a further subdivision of $I$, we may further restrict to an arc of the conic $q(x,y)=0$ given by $y=w(x)$ for a function $w \in C(I)$. Here, $w$ is infinitely differentiable in the interior of $I$, but may have a  vertical tangent at either of the endpoints. 
Let us define the function
\[
h(x) = w(x) - f(x),
\]
which is algebraic of some degree $2 \leq d \leq 2k$ and $C^\infty$ in the interior of $I$ (\label{com:h}not to be confused with the function $h(y)$ used to define the dual curve before Lemma \ref{lem:huxleyL11}). Let
\[
c := \inf_{x \in I} \max_{1 \leq \ell \leq d} |h^{(\ell)}(x)|.
\]
The constant $c$ is well defined by Lemma \ref{lem:nonvanishing_derivative}, and we have $c \geq c_0 >0$ for some constant $c_0$ depending only on $F$. (Here we are using the fact that $Q$ belongs to a finite list of quadratic forms depending on the form $F$.)

For some fixed $\delta \ll B^{-\tau}$, we can divide the interval $I$ into $O_k(1)$ subintervals where either one of the two inequalities
\begin{equation}
\label{eq:arc_inequalities}
|h(x)| \leq \delta \quad \text{or} \quad  |h(x)| \geq \delta
\end{equation}
holds identically. Letting $J$ be one of those subintervals where the first alternative in \eqref{eq:arc_inequalities} holds, our aim is now to estimate the length of $J$. We make a subdivision of $J$ into at most $8k^4$ intervals $J_{\nu}$ according to Lemma \ref{lem:BP_Lemma6}, where we take
$C_\ell = c/2$ for all $1 \leq \ell \leq d$.

Suppose first that the alternative \eqref{eq:small_deriv} holds on $J_\nu$ for $\ell = 1$. By Lemma \ref{lem:nonvanishing_derivative} and the definition of $c$, the other alternative \eqref{eq:large_deriv} must then hold for some $2 \leq \ell \leq 2k$. Let us take this $\ell$ to be minimal. By Lemma \ref{lem:russian}, it then follows that
\[
|J_\nu| \ll \delta^{1/\ell}.
\]
We may certainly assume that the arc of the conic corresponding to $J_\nu$ contains a rational point $(\alpha,\beta) \in \QQ^2$. Suppose first that $\ell \geq 3$. The bound \eqref{eq:small_deriv} on $h''$ then implies that
\[
|h(x) - (h(\alpha) + h'(\alpha)(x-\alpha))| \ll (x-\alpha)^2. 
\]
Similarly, the bound on $f''$ from Lemma \ref{lem:subdivision1} implies that
\[
|f(x) - (f(\alpha) + f'(\alpha)(x-\alpha)| \ll (x-\alpha)^2.
\]
These two bounds imply that
\[
|w(x) - (\beta+\zeta(x-\alpha))| \ll (x-\alpha)^2 \ll \delta^{2/\ell},
\]
for all $x \in J_\nu$, where $\zeta:=h'(\alpha)$. 
If instead $\ell = 2$, then we have
\[
|x-\alpha| \ll \delta^{1/2} \qquad \text{and} \qquad |h(x)-h(\alpha)| \ll \delta^{1/2},
\]
implying that
\[
|w(x) - \beta| \ll \delta^{1/2}.
\]

Finally, suppose that \eqref{eq:large_deriv} holds for $\ell =1$. Then Lemma \ref{lem:russian} implies that $|J_\nu| \leq 24 \delta/c$, which in turn gives
\begin{align*}
|w(x) - \beta| &\leq |f(x)-f(\alpha)| + |h(x) - h(\alpha)| \\
&\leq |x-\alpha| + 2 \delta \leq \left(24 + \frac{2}{c}\right) \delta.
\end{align*}

To sum up, the rational points that lie on the conic $q(x,y)=0$ and at the same time contribute to $N_\gamma(F,B)$ all lie in one of at most $O_k(1)$ parallelograms, each defined by conditions
\[
|x-\alpha| \leq B_1, \qquad |y-\beta-\zeta(x-\alpha)| \leq B_2,
\]
where either $B_1,B_2 \ll B^{-\tau/2}$ or 
\[
B_1 \ll B^{-\tau/\ell}, \qquad B_2 \ll B^{-2\tau/\ell}
\]
for some $3 \leq \ell \leq 2k$. Switching to the projective viewpoint, we require an estimate for the quantity
\[
\cN\big(Q,(B_1,B_2,B),(\alpha,\beta,\zeta)\big)
\]
say, where for a homogeneous polynomial $H \in \ZZ[x,y,z]$ and constants $B_1,B_2,B_3$, $\alpha_1,\alpha_2,\alpha_3$ we define $\cN(Q,\mathbf{B},\boldsymbol{\alpha})$ to be the number of primitive integer tuples $(x,y,z)$ satisfying
\begin{gather*}
H(x,y,z) = 0,\\
|x-\alpha_1 z| \leq B_1, \quad |y-(\alpha_2-\alpha_3\alpha_1)z - \alpha_3 x| \leq B_2, \quad |z| \leq B_3.
\end{gather*}
To this end we need the following generalisation of \cite[Thm 4.8]{Browning09} from boxes to certain parallelepipeds.
\begin{lemma}
\label{lem:browning_thm_4.8}
If $H$ is a non-singular form of degree $d \geq 2$, then for any $\alpha_i \in \RR$ and any $B_i \geq 1$ we have
\[
\cN(H,\mathbf{B},\boldsymbol{\alpha}) \ll_d (B_1 B_2 B_3)^{1/3}.
\]
\end{lemma}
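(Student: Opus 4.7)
The natural strategy is to reduce the parallelepiped to an axis-aligned box by a volume-preserving linear change of variables, and then invoke (a mild generalisation of) Browning's original Theorem 4.8 in \cite{Browning09}. Define $T : \RR^3 \to \RR^3$ by
\[
T(x,y,z) = \bigl(x - \alpha_1 z,\ y - \zeta x - (\beta - \zeta\alpha_1) z,\ z\bigr).
\]
Writing $(u,v,w) = T(x,y,z)$, one checks that $T^{-1}$ has matrix
\[
\begin{pmatrix} 1 & 0 & \alpha_1 \\ \zeta & 1 & \beta \\ 0 & 0 & 1 \end{pmatrix},
\]
so that $\det T = 1$, and $T$ carries the parallelepiped in the statement bijectively onto the box $[-B_1,B_1]\times[-B_2,B_2]\times[-B_3,B_3]$. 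Set $\tilde H := H \circ T^{-1}$; this is a non-singular ternary form of degree $d$ whose coefficients are real and bounded in terms of $F$ alone.

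Under $T$, the points to be counted correspond to points of the lattice $\Lambda := T(\ZZ^3) \subset \RR^3$ lying on $\tilde H = 0$ inside the above box. Because $\det T = 1$, the lattice $\Lambda$ has covolume $1$ in $\RR^3$. The plan is now to apply the analogue of \cite[Thm 4.8]{Browning09} with the lattice $\ZZ^3$ replaced by $\Lambda$. Browning's proof proceeds via the Heath-Brown determinant method: one covers the box by sub-boxes, bounds the relevant $s \times s$ sub-determinant of evaluated monomials by the standard trilinear geometric estimate (with the sidelengths $B_1,B_2,B_3$), and compares to a lower bound coming from discreteness of the lattice. Both the geometric upper bound and the discreteness lower bound depend only on $\det \Lambda$ and the box sidelengths, so the argument transfers verbatim to a general covolume-$1$ lattice in place of $\ZZ^3$, yielding
\[
\cN(H, \mathbf{B}, \boldsymbol{\alpha}) \ll_d (B_1 B_2 B_3)^{1/3}.
\]

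The main point requiring care is the transfer of the determinant argument from $\ZZ^3$ to the real lattice $\Lambda$. Because $\tilde H$ has real rather than integer coefficients, one cannot directly invoke the integrality of the auxiliary determinant used in the original proof; instead one checks that the non-zero values of that determinant, viewed as elements of a discrete subset of $\RR$ determined by $\Lambda$, are bounded below by a constant depending only on $\det \Lambda = 1$ and the (bounded) coefficients of $\tilde H$. Once that is verified, the remainder of Browning's argument — the choice of auxiliary forms and the intersection-theoretic counting on the resulting curves — goes through unchanged, and the implied constant depends only on $d$, as required.
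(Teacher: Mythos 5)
Your overall strategy---straightening the parallelepiped by the unimodular map $T$ and rerunning the proof of \cite[Thm 4.8]{Browning09}---is exactly the paper's proof, which consists of the single observation that the transformation $(x,y,z) \mapsto (x-\alpha_1 z,\ y-(\beta-\zeta\alpha_1)z-\zeta x,\ z)$ has determinant $1$. The problem lies in the one step you yourself single out as requiring care. After passing to $\Lambda = T(\ZZ^3)$ and $\tilde H = H\circ T^{-1}$ there is no ``discrete subset of $\RR$ determined by $\Lambda$'' in which the nonzero values of the auxiliary determinants live: already the linear form $u = x-\alpha_1 z$ takes a dense set of values at integer $(x,z)$ when $\alpha_1$ is irrational, and nonzero determinants of matrices with such real entries are not bounded below by anything depending only on $\det \Lambda = 1$; moreover the coefficients of $\tilde H$ do not enter these determinants at all, so they cannot supply a lower bound either. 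As stated, the verification you defer is false in general and cannot be ``checked''. (A smaller point: for arbitrary $\alpha_1,\beta,\zeta \in \RR$, as allowed in the lemma, the coefficients of $\tilde H$ are not bounded in terms of the original form alone, so an argument leaning on them would also lose the uniformity $\ll_d$ asserted in the statement.)

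What actually rescues the argument---and is precisely the content of the paper's one-line proof---is that the relevant determinants never leave $\ZZ$. The points being counted are still points of $\ZZ^3$ and the form is still $H$; the determinants occurring in Browning's argument are formed from the coordinates of the integer points (more generally, from a complete basis of monomials of some fixed degree evaluated at them), and replacing the coordinates $(x,y,z)$ by the linear forms $x-\alpha_1 z$, $y-(\beta-\zeta\alpha_1)z-\zeta x$, $z$ multiplies such a determinant by a power of $\det T = 1$, i.e.\ leaves it unchanged. Hence these determinants remain integers, so the lower bound ``nonzero implies $\geq 1$'' is retained, while their upper bounds may be computed in the skew coordinates, which have sizes $B_1, B_2, B_3$; the geometric input (a line meets the non-singular curve $H=0$ in at most $d$ points) is invariant under any linear change of variables. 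With the key step justified this way your write-up collapses to the paper's proof; as you have written it, the justification of that step is a genuine gap.
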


\begin{proof}
In adapting the proof of \cite[Thm 4.8]{Browning09} to this setting, one needs only to observe that the linear transformation 
\[
(x,y,z) \mapsto (x-\alpha_1 z, y-(\alpha_2-\alpha_3\alpha_1)z - \alpha_3 x,z)
\]
has determinant 1.
\end{proof}

Among the possible bounds for $B_1,B_2$ above, it is clear that the worst case occurs when $B_1 \approx B^{-\tau/(2k)}$ and $B_2 \approx B^{-\tau/k}$. An application of Lemma \ref{lem:browning_thm_4.8} then shows that the contribution to $N_\gamma(F,B)$ from a single quadratic form $Q$, and hence the contribution when $Q$ ranges over the finitely many forms defining the exceptional conics, is at most
\begin{equation}
\label{eq:exceptional_conics}
O(B^{1-\tau/(2k)}).
\end{equation}

\subsection*{Contribution from curves of higher degree}

In the case $\deg A_i \geq 3$, we will simply bound the number of solutions to $A_i= 0$, as it seems difficult to combine the auxiliary equation with the original inequality. Denote by $\cN_i$ the corresponding contribution to $N^*_\gamma(F,B)$. By \cite[Thm. 3] {Heath-Brown02} we have
\[
\cN_i 
\ll B^{2/3 + \epsi}.
\]
To improve this bound, we follow the strategy in \cite[\S 4]{Heath-Brown12} to take advantage of the restrictions
\begin{equation}
\label{eq:parallelepiped}
|x-r_i z| \leq \frac{B}{M},\ |y-s_i z| \leq \frac{B}{M},\ |z| \leq B
\end{equation}
imposed by \eqref{eq:auxiliary+patch}. Thus, define the lattice
\[
\Gamma_i = \left\{\frac{M}{B}(x-r_i z), \frac{M}{B}(y-s_i z), \frac{1}{B} z\right\}
\] 
and let $\gamma_i^{(1)} \leq \gamma_i^{(2)} \leq\gamma_i^{(3)}$ be the successive minima of that lattice. We have
\[
\gamma_i^{(1)}\gamma_i^{(2)}\gamma_i^{(3)} \asymp \det(\Gamma_i) = \frac{M^2}{B^3}
\]
by \cite[Lemma 4.3]{Browning09}. It follows that $\gamma_i^{(1)} \ll M^{2/3}/B$. We also have the obvious lower bound $\gamma_i^{(1)} \geq B^{-1}$.

Now a minimal basis for $\Gamma_i$ as in \cite[Lemma 4.3]{Browning09} gives rise to a unimodular transformation 
\[
(x,y,z) \mapsto (\lambda_1,\lambda_2,\lambda_3)
\]
with the property that $(x,y,z)$ satifies \eqref{eq:parallelepiped} only if
\[
|\lambda_j| \leq L_j \quad \text{for } j=1,2,3,\quad \text{where } L_j \ll \frac{1}{\gamma_i^{(j)}}.
\]
The form $\tilde{A_i}(\lambda_1,\lambda_2,\lambda_3)$ obtained by applying this transformation to $A_i$ is of course irreducible of the same degree, so we may then instead apply \cite[Thm. 3]{Heath-Brown02} with $B$ replaced by $L_1 \ll (\gamma_i^{(1)})^{-1}$ to get
\[
\cN_i \ll (\gamma_i^{(1)})^{-2/3} B^\epsi.
\]
We therefore investigate how often the first successive minimum is of a certain order of magnitude.

\begin{lemma}
\label{lem:successive_min}
For each $B/M^{2/3} \ll L \ll B$, the number of indices $i \leq M$ in Corollary \ref{cor:determinant} such that 
\begin{equation}
\label{eq:L_dyadic}
L \leq (\gamma_i^{(1)})^{-1} \leq 2L
\end{equation}
is at most 
\[
O\left(\frac{B^{2+\epsi}}{L^2}\right).
\]
\end{lemma}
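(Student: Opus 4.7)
\noindent\emph{Proof plan.} The strategy is to translate the assertion $\gamma_i^{(1)} \leq 1/L$ into a Diophantine approximation statement about the patch centre $(r_i,s_i)$, and then to count bad patches by exploiting the fact that they lie within $O(1/M)$ of the affine curve $g(x,y)=F(x,y,1)=0$.

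I would begin by invoking the definition of the first successive minimum (with respect to the unit cube) to produce, for each bad index $i$, a non-zero $(x,y,z) \in \ZZ^3$ with
\[
|Mx-v_iz|\leq B/L,\quad |My-w_iz|\leq B/L,\quad |z|\leq B/L.
\]
The case $z=0$ is ruled out because it would force $|x|,|y|\leq B/(ML) \ll M^{-1/3} < 1$, and hence $(x,y,z)=\mathbf{0}$. Replacing $(x,y,z)$ by its primitive part (which only strengthens the three inequalities), I may assume $(x,y,z)$ is primitive and $1\leq z\leq B/L$; then $P:=(x/z,y/z)$ is a primitive rational point of denominator $z$ lying at max-distance $\leq B/(MLz)$ from $(r_i,s_i)$.

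The next step uses the fact that each patch centre $(r_i,s_i)$ lies within $O(1/M)$ of the curve $g=0$, which is an immediate consequence of the construction of the covering in Lemma \ref{lem:covering}. Since $z\leq B/L$ implies $1/M\ll B/(MLz)$, the rational $P$ itself lies within $O(B/(MLz))$ of the curve. Subdividing $[-1,1]^2$ via Lemma \ref{lem:subdivision1} and working on each arc $y=f(x)$ with $|f'|\leq 1$ (the case $x=f(y)$ is symmetric), the integer pairs $(x,y)$ with $|x|\leq 2z$ and $|y-zf(x/z)|\ll B/(ML)$ number $O(zB/(ML)+z)$, which upper-bounds the number of such rationals $P$ of denominator $z$. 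For each fixed $P$, the patches $(v/M,w/M)$ within $B/(MLz)$ of $P$ whose centres also lie near the curve project onto an arc of length $O(B/(Lz)/M)$ on the $v$-axis and hence comprise $O(B/(Lz)+1)$ grid points. Summing over $1\leq z\leq B/L$ yields
\[
\sum_{z=1}^{\lfloor B/L\rfloor}\left(\frac{zB}{ML}+z\right)\left(\frac{B}{Lz}+1\right)\ll \frac{B^3}{ML^3}+\frac{B^2}{L^2}.
\]

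The main obstacle -- and the reason a naive geometry-of-numbers estimate would only yield $O((B/L)^3)$, which is too weak -- is disposing of the residual term $B^3/(ML^3)$. I would handle this by dichotomy: when $L\geq B/M$, the inequality $B^3/(ML^3)\leq B^2/L^2$ holds automatically; when $L<B/M$, the target $B^{2+\epsi}/L^2$ exceeds $M^2$ (and in particular $M$), and the bound follows immediately from the fact that Corollary \ref{cor:determinant} produces at most $M$ indices in total.
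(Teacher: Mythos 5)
Your argument is correct, but it proves the lemma by a genuinely different route than the paper. The paper's proof is purely arithmetic: after the same reduction (a lattice vector with $|Mx_1-v_iz_1|,|My_1-w_iz_1|,|z_1|\leq B/L$, the case $z_1=0$ excluded, and the observation via Lemma \ref{lem:covering} that $v$ determines $w$ up to $O(1)$ choices), it rewrites the condition as $0\neq vz_1=Mx_1+O(B/L)$ with $z_1=O(B/L)$, counts $O(B/L)$ choices for $x_1$, then $O(B/L)$ choices for the non-zero integer $vz_1$, and finally invokes a divisor-function estimate to recover $(v,z_1)$ in $O(B^\epsi)$ ways, giving $O(B^{2+\epsi}/L^2)$. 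You instead perform a geometric double count: rational points $P$ of denominator $z\leq B/L$ lying within $O(B/(MLz))$ of the curve (counted via the graph structure of Lemma \ref{lem:subdivision1}, giving $O(zB/(ML)+z)$ per denominator), times the $O(B/(Lz)+1)$ nearby patch centres. This leans more heavily on the structural fact that the patch centres lie within $O(1/M)$ of the curve --- which is indeed available from Lemma \ref{lem:covering} and the mean value theorem, and which the paper itself uses in the weaker form ``$O(1)$ admissible $w$ per $v$'' --- but in exchange it avoids the divisor estimate and the attendant $B^\epsi$ loss, yielding the marginally stronger bound $O(B^2/L^2)$. Two small remarks: your concluding dichotomy is unnecessary, since in the stated range $L\gg B/M^{2/3}$ one has $B/(ML)\ll M^{-1/3}$, so the residual term $B^3/(ML^3)$ is automatically $\ll B^2/L^2$; and on squares where the curve is a graph $x=f(y)$ the roles of $v$ and $w$ in your patch count must be interchanged, exactly as in your rational-point count, which you already note is symmetric.
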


\begin{proof}
Put $v=v_i$, $w= w_i$, $\gamma = \gamma_i^{(1)}$. By  definition of $\gamma$, there exists a vector $(x_1,y_1,z_1)\in \ZZ^3$ such that 
\begin{equation}
\label{eq:gamma}
\max\left\{B^{-1}|M x_1 - vz_1|,B^{-1}|M y_1 - wz_1|,B^{-1}|z_1|\right\} = \gamma.
\end{equation}
By restricting to a square $S$ as in \eqref{eq:partial_lower}, we may assume that once $v$ has been chosen, there are only $O(1)$ admissible choices for $w$. Furthermore, if $z_1 = 0$ in \eqref{eq:gamma}, then $L^{-1} \gg \gamma = M/B$, which contradicts the assumption on $L$, at least if $B$ is large enough. Hence it is enough to count the number of $v,x_1,z_1$ that satisfy
\begin{align}
\label{eq:vz_1}
0 \neq v z_1 &= M x_1 + O\left(\frac{B}{L}\right),\\
z_1 &= O\left(\frac{B}{L}\right).
\end{align}
These conditions imply that $x_1 \ll B/L$, and for each $x_1$ there are $O(B/L)$ choices for the non-zero integer on the right hand side of \eqref{eq:vz_1}. By a divisor function estimate there are then at most $O(B^\epsi)$ choices for $v,z_1$, so in total there are $O(B^{2+\epsi}/L^2)$ possibilities for $v,x_1,z_1$.  
\end{proof}

The contribution to $\sum_{i \leq M} \cN_i$ from all $i$ such that $\deg A_i \geq 3$ and \eqref{eq:L_dyadic} holds is thus
\[
\ll L^{2/3} B^\epsi \min\left\{M,\frac{B^2}{L^2}\right\}. 
\]
In the range $B/M^{2/3} \ll L \ll B$, this quantity is maximised when $L \approx B/M^{1/2}$, so a dyadic summation gives
\[
\sum_{\deg A_i \geq 3} \cN_i 
\ll B^{3/(2\tau) + 2/3 + \epsi}.
\]

These bounds show that the contribution to $N_\gamma(F,B)$ coming from forms $A_i$ of degree at least $2$ is
\[
\ll  B^{9/(4\tau) + 1-\tau/k + \epsi} + B^{3/(2\tau) + 2/3 + \epsi},
\]
the contribution \eqref{eq:exceptional_conics} being negligible.
This expression dominates all the terms in \eqref{eq:lines_final}, as is easily seen. As already observed in \S \ref{sec:intro}, the contribution from the tangent lines $T_i$ can also be subsumed in this bound under the assumption \eqref{inflection_assumptions}. Hence Theorem \ref{thm:main_generic} follows. 

To obtain the more general bound of Theorem \ref{thm:main}, we replace the estimate \eqref{eq:lines_final} with \eqref{eq:lines_final_worst_improved}.

\bibliographystyle{plain}
\bibliography{../ratpoints}

\end{document}